\tikzset{snake it/.style={decorate, decoration=snake}}
\theoremstyle{plain}
\newtheorem{thm}{Theorem}[section]
\newtheorem{cor}[thm]{Corollary}
\newtheorem{prop}[thm]{Proposition}
\newtheorem{conj}[thm]{Conjecture}
\theoremstyle{definition}
\newtheorem{defn}[thm]{Definition}
\newtheorem{example}[thm]{Example}
\theoremstyle{remark}
\newtheorem{rmk}[thm]{Remark}
\newcommand{\BC}{{\mathbb{C}}}
\newcommand{\BN}{{\mathbb{N}}}
\newcommand{\BP}{{\mathbb{P}}}
\newcommand{\BQ}{{\mathbb{Q}}}
\newcommand{\BU}{{\mathbb{U}}}
\newcommand{\CC}{{\mathcal C}}
\newcommand{\CJ}{{\mathcal J}}
\newcommand{\CO}{{\mathcal O}}
\newcommand{\Fp}{{\mathfrak{p}}}
\newcommand{\Fq}{{\mathfrak{q}}}
\newcommand{\Fr}{{\mathfrak{r}}}
\DeclareFontFamily{OT1}{rsfs}{}
\DeclareFontShape{OT1}{rsfs}{n}{it}{<-> rsfs10}{}
\DeclareMathAlphabet{\curly}{OT1}{rsfs}{n}{it}
\newcommand{\Chow}{\mathrm{CH}}
\newcommand{\Corr}{\mathrm{Corr}}
\newcommand{\cjac}{\overline{J}_{C}}
\newcommand{\cjacg}{\overline{\mathcal{J}}_{C}}
	\newcommand{\mdol}{\mathcal{M}_{Dol}}
	\newcommand{\mb}{\mathcal{M}_{B}}
 \newcommand{\Aff}{\mathbb{A}}
\newcommand{\ita}{\textit}
\newcommand{\gr}{\textbf}
\let\@wraptoccontribs\wraptoccontribs
\title{P=W phenomena in algebraic and enumerative geometry}
\author{Camilla Felisetti}
\begin{document}
		\maketitle
    \begin{abstract}
        In view of the recent proofs of the P=W conjecture, the present paper reviews and relates the latest results in the field, with a view on how P=W phenomena appear in multiple areas of algebraic geometry. As an application, we give a detailed sketch of the proof of P=W by Maulik, Shen and Yin.
    \end{abstract}
		{\footnotesize{\textbf{Keywords:} Higgs bundles, character varieties, $P=W$ conjecture, non abelian Hodge theory, hyperk\"ahler manifolds}}\\
		\noindent
{\footnotesize{\textbf{MSC 2020 classification:} 14D20, 14D06, 32S60 }}
	\tableofcontents
		\linespread{1.2}
  
\section{Introduction to P=W phenomena}

P=W phenomena provide a unified interpretation of the symmetries enjoyed by the cohomology groups of certain complex algebraic varieties. Let $X$ be a smooth complex algebraic variety of dimension $n$.
If $X$ is projective and $\eta\in H^2(X,\mathbb{Q})$ is an ample class then, for every $i$, the Hard Lefschetz theorem establishes isomorphisms
\begin{equation}\label{eq:HL}
\cup \eta^i:\ H^{2n-i}(X,\mathbb{Q})\xrightarrow{\simeq}H^{2n+i}(X,\mathbb{Q}).
\end{equation}
Given any smooth algebraic map $f:X\rightarrow Y$ of relative dimension $n$, we can interpret the cohomology of $X$ as the total hypercohomology of the complex $Rf_*\mathbb{Q}_X$ on $Y$
\begin{equation*}
H^*(X,\mathbb{Q})=H^*(Y,Rf_*\mathbb{Q}),
\end{equation*}
and the Hard Lefschetz theorem is a direct consequence of its relative version 
$$  \cup \eta^i: \ R^{n-i}f_*\mathbb{Q}_X\xrightarrow{\simeq}R^{n+i}f_*\mathbb{Q}_X,$$
so that \eqref{eq:HL} can be refined into isomorphisms
\begin{equation}\label{eq:RHLsmooth}
\cup \eta^i: \ \mathrm{Gr}_L^{n-i}H^d(X,\mathbb{Q})\xrightarrow{\simeq} \mathrm{Gr}_L^{n+i}H^{d+2i}(X,\mathbb{Q}),
\end{equation}
where $L_{\bullet}$ denotes the Leray filtration associated with $f$.

When the map $f:X\rightarrow Y$ is no longer a smooth morphism, \eqref{eq:RHLsmooth} continues to hold up to replacing the Leray filtration with the \textit{Perverse Leray filtration} $P_{\bullet}$ associated with $f$:
$$ \cup \eta^i: \ \mathrm{Gr}_P^{n-i}H^d(X,\mathbb{Q})\xrightarrow{\simeq} \mathrm{Gr}_P^{n+i}H^{d+2i}(X,\mathbb{Q}).$$
The perverse filtration $P_\bullet$ on $H^*(X,\mathbb{Q})$ is defined in the same way as the classical Leray filtration, but replacing the standard truncation functors $\tau_{\leq k}$ with the $\ita{perverse}$ truncation functors $^\mathfrak{p}\tau_{\leq k}$ (see the appendix for further details): 
$$P_kH^{n+j}(X,\mathbb{Q}):=\mathrm{Im}\left\lbrace H^j(Y,\ ^\mathfrak{p}\tau_{\leq k}Rf_*\mathbb{Q}_X[n]) \rightarrow H^j(Y,Rf_*\mathbb{Q}_X[n])\right\rbrace, \quad \forall j, k\in \mathbb{Z}.$$
If the target $Y$ is either affine or projective (as it happens for the cases of interest in this exposition),  $P_{\bullet}$ can be described with a particularly simple \textquotedblleft flag\textquotedblright \ form:
\begin{prop}[\cite{deCataldoMigliorini2010}, Theorem 4.1.1 and \cite{deCataldoMigliorini05}, Proposition 5.2.4 ] Let $f:X\rightarrow Y$ be a proper map of algebraic varieties and let $P$ be the associated perverse filtration. 
\begin{enumerate}
\item If $Y$ is affine then $$ P_k H^d(X,\mathbb{Q})= \mathrm{Ker}\left\lbrace H^d(X,\mathbb{Q})\rightarrow H^d(f^{-1}(\Lambda^{d-k-1}),\mathbb{Q})\right\rbrace,$$
where $\Lambda^{i}$ denotes any $i$-dimensional linear section of $Y\subseteq \Aff^N$.
\item If $Y$ is projective and $L=f^*\alpha$ is the pullback of an ample class on $Y$, then 
$$P_kH^d(X, \BQ) = \sum_{i\geq 1} \left(
\mathrm{Ker}(L^{n+k+i-d}) \cap \mathrm{Im}(L^{i-1}) \right) \cap H^d(X, \BQ).$$
\end{enumerate}
\end{prop}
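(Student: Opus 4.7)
The plan is to derive both parts from the perverse $t$-structure on $D^b_c(Y)$. Setting $K := Rf_{*}\BQ_X[n]$, the perverse filtration rewrites as $P_k H^{d}(X,\BQ) = \mathrm{Im}\bigl(H^{d-n}(Y, {}^{\mathfrak{p}}\tau_{\leq k} K) \to H^{d-n}(Y, K)\bigr)$. The two parts then diverge according to which Lefschetz-type input is available on $Y$: Artin affine vanishing for (1), and Hard Lefschetz for pure perverse sheaves on projective $Y$ for (2). I would handle them in turn.

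For part (1) I would first fix a sufficiently generic flag of linear sections $\emptyset = \Lambda^{-1} \subset \Lambda^{0} \subset \cdots \subset \Lambda^{n} = Y$ with $\dim \Lambda^{i} = i$. The technical engine is Artin vanishing: if $\CF$ is a perverse sheaf on an affine variety $U$ of dimension $m$, then $H^{j}(U,\CF) = 0$ for $j>0$. Combined with a Bertini-type transversality argument for each perverse cohomology ${}^{\mathfrak{p}}\CH^{\ell}(K)$, this implies that on a generic flag the restrictions ${}^{\mathfrak{p}}\CH^{\ell}(K)|_{\Lambda^{i}}$, suitably shifted by $-(n-i)$, stay perverse and have vanishing hypercohomology in the expected range. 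Plugging this into the long exact sequences attached to the distinguished triangle ${}^{\mathfrak{p}}\tau_{\leq k} K \to K \to {}^{\mathfrak{p}}\tau_{>k} K \xrightarrow{+1}$ identifies the image of $H^{d-n}(Y,{}^{\mathfrak{p}}\tau_{\leq k} K) \to H^{d-n}(Y,K)$ with the kernel of restriction to a linear section of the right dimension; a careful bookkeeping of shifts produces exactly $d-k-1$.

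For part (2) I would invoke the decomposition theorem of Beilinson--Bernstein--Deligne--Gabber to obtain a splitting $K \simeq \bigoplus_{\ell} \CP_{\ell}[-\ell]$, with $\CP_{\ell} := {}^{\mathfrak{p}}\CH^{\ell}(K)$ pure semisimple. The canonical filtration $P_{k}H^{d}(X,\BQ)$ then corresponds, via the (non-canonical) splitting of $K$, to $\bigoplus_{\ell \leq k} H^{d-n-\ell}(Y,\CP_{\ell})$. Since $Y$ is projective and each $\CP_{\ell}$ is pure, Hard Lefschetz for perverse sheaves yields isomorphisms $\alpha^{j}:H^{-j}(Y,\CP_{\ell}) \xrightarrow{\sim} H^{j}(Y,\CP_{\ell})$, and hence a Lefschetz $\mathfrak{sl}_{2}$-action on each $H^{\bullet}(Y,\CP_{\ell})$ with its primitive decomposition. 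The operator $L=f^{*}\alpha$ preserves each summand $H^{\bullet}(Y,\CP_{\ell})$, and the condition $x \in \mathrm{Ker}(L^{n+k+i-d}) \cap \mathrm{Im}(L^{i-1})$ selects, for each $i \geq 1$, precisely the classes of the form $L^{i-1}(\text{primitive})$ supported on the summands with $\ell \leq k$; summing over $i$ reconstructs the full subspace, matching the proposed formula.

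I expect the main obstacle to lie in part (1), specifically in the Bertini-type genericity: one must guarantee that ${}^{\mathfrak{p}}\CH^{\ell}(K)|_{\Lambda^{i}}[-(n-i)]$ remains perverse for every $\ell$ and $i$, and that restriction is compatible with the $t$-structure uniformly along the whole flag, so that a single flag witnesses the filtration for all $k$ simultaneously. In part (2) the remaining difficulty, though essentially combinatorial, is not entirely routine: one has to match the shape of the Lefschetz primitive decomposition (with pivot degrees depending on each $\CP_{\ell}$) to the closed expression involving intersections of kernels and images of powers of $L$, uniformly in $k$, $d$, and $i$.
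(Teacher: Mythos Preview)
The paper does not give its own proof of this proposition: it is stated as a quotation of \cite{deCataldoMigliorini2010} and \cite{deCataldoMigliorini05}, with no argument supplied. So there is nothing in the paper to compare your proposal against. That said, your sketch is faithful to the arguments in those references: part~(1) via Artin vanishing on affine varieties together with a generic flag of linear sections is exactly the mechanism of \cite{deCataldoMigliorini2010}, and part~(2) via the decomposition theorem plus Hard Lefschetz for the pure perverse summands on projective $Y$ is the content of \cite{deCataldoMigliorini05}. The concerns you flag (uniform Bertini-type genericity for the flag in~(1), and the bookkeeping matching the primitive decomposition to the kernel/image formula in~(2)) are real but are handled in those papers; your outline is correct.
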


In general, in absence of some degree of properness by $X$ or by the map $f$, we do not expect any simmetries of Hard Lefschetz type on the cohomology of $X$. Surprisingly, this kind of symmetries has been first observed on the cohomology of a non compact symplectic variety, namely the character variety of (twisted) representations of the fundamental group of a curve into some general linear group. This has led to the formulation of the P=W conjecture and, more generally, to the investigation of P=W phenomena.

\subsection{The original P=W conjecture}\label{sec:theoriginalconjecture}

Let $\Sigma$ be a smooth projective curve of genus $g\geq 1$ and fix a point $p\in \Sigma$. Fix two integers $r,n\in \mathbb{Z}$ with $r>0$ and $(r,n)=1$.  We define the \ita{Betti moduli space}
\begin{equation*}\label{eq:defbetti}
\mb(r,n):=\left\lbrace  A_1,\ldots,A_g,B_1\ldots B_g\in\mathrm{GL}_r(\mathbb{C})^{2g}\mid \prod_{i=1}^g[A_i,B_i]=e^{\frac{2\pi i n}{r} }I_r\right\rbrace\sslash \mathrm{GL}_r(\mathbb{C})
\end{equation*}
as the (twisted) character variety of representations of the fundamental group $\pi_1(\Sigma,p)$ of $\Sigma$ into $\mathrm{GL}_r(\mathbb{C})$.

The Betti moduli space is an affine GIT quotient and its cohomology groups carry a mixed Hodge structure, which turns out to be of Hodge--Tate type, with weight filtration $W_\bullet$. 
As mentioned before, although $\mb(r,n)$ is not proper, the graded pieces of the weight filtration on $H^*(\mb(r,n),\mathbb{Q})$ satisfy unexpected Hard Lefschetz symmetry, called the \ita{curious Hard Lefschetz property}.
\begin{thm}[Curious Hard Lefschetz] There is a class $\sigma\in H^2(\mb(r,n),\mathbb{Q})$ of weight $4$ such that 
\begin{equation*}
\cup \sigma^i: \mathrm{Gr}_W^{\dim \mb-2i}H^d(\mb(r,n),\mathbb{Q})\xrightarrow{\simeq} \mathrm{Gr}_W^{\dim \mb+2i}H^{d+2i}(\mb(r,n),\mathbb{Q}).
\end{equation*}
\end{thm}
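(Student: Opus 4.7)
The strategy I would pursue is to deduce curious hard Lefschetz from the P=W isomorphism (whose proof is the main subject of subsequent sections) combined with classical relative hard Lefschetz. Via non-abelian Hodge theory, $\mb(r,n)$ is diffeomorphic to the Dolbeault moduli space $\mdol(r,n)$, which admits a proper Hitchin fibration $h\colon \mdol(r,n) \to A$ onto the Hitchin base. The plan is to transfer the problem to $\mdol$, produce the required symmetry there using the decomposition theorem applied to $h$, and then pull it back to $\mb$ via P=W.

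Concretely, I would proceed as follows. First, identify a natural candidate for $\sigma \in H^2(\mb(r,n),\BQ)$, for instance as an appropriate K\"unneth component of the second Chern class of the universal local system (equivalently, as the class corresponding under non-abelian Hodge theory to the pullback of an ample class on the Hitchin base); a direct check, using that the mixed Hodge structure on $H^*(\mb,\BQ)$ is of Hodge--Tate type, then shows that $\sigma$ has Hodge type $(2,2)$ and hence weight $4$. Second, apply relative hard Lefschetz for the proper map $h$, which by the decomposition theorem produces isomorphisms
$$\mathrm{Gr}_P^{\dim \mdol - i} H^d(\mdol,\BQ) \xrightarrow{\simeq} \mathrm{Gr}_P^{\dim \mdol + i} H^{d+2i}(\mdol,\BQ)$$
given by cup product with the corresponding class. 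Third, invoke the P=W identification $W_{2k} H^*(\mb,\BQ) = P_k H^*(\mdol,\BQ)$, under the canonical isomorphism of underlying vector spaces, to translate this statement into the desired symmetry on the weight-graded pieces of $H^*(\mb,\BQ)$, noting that the doubling $2k$ on the weight side matches the weight $4 = 2 \cdot 2$ of $\sigma$.

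The main obstacle is clearly the P=W identification itself: it is highly non-trivial and constitutes the central result of the paper, whose proof in the Maulik--Shen--Yin framework is sketched in later sections via support theorems and $\chi$-independence. An independent and historically earlier route, due to Hausel--Letellier--Rodriguez-Villegas and completed by Mellit, bypasses P=W by computing the full mixed Hodge polynomial of $\mb$ via Frobenius's character formula applied to point counts $\#\mb(\BF_q)$, then extracting the palindromic symmetry $q \mapsto 1/q$ directly from the resulting combinatorial expression; promoting this numerical identity to an actual cup-product isomorphism, via the construction of a suitable $\mathfrak{sl}_2$-action compatible with $W_\bullet$, is the subtle remaining step in that alternative approach.
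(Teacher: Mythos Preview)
The paper does not prove this theorem: it is quoted as a result from the literature, with the rank-$2$ case attributed to Hausel--Rodriguez-Villegas \cite{HauselRodriguez-Villegas2008} and the general case to Mellit \cite{Mellit2019}. In the paper's logical architecture, curious Hard Lefschetz functions as an \emph{input}, not an output.

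This makes your primary strategy circular within the paper. The Maulik--Shen--Yin proof of P=W sketched later reduces P=W to the identity P=C on $\mdol$, and then explicitly invokes curious Hard Lefschetz to cut the equality P=C down to the single inclusion $C_k \subseteq P_k$ (quoting the paper: ``one just needs to prove that P $\supseteq$ C as the other inclusion is granted via Curious Hard Lefschetz''). So you cannot deduce curious Hard Lefschetz from the P=W argument presented here without assuming what you want to prove. A separate, smaller issue: the class driving relative Hard Lefschetz for $h$ must be $h$-\emph{relatively} ample, i.e.\ ample on fibres; a class pulled back from the Hitchin base, as you suggest, has perversity $0$ and acts trivially on the perverse graded pieces, so it cannot produce the required isomorphisms $\mathrm{Gr}_P^{n-i}\xrightarrow{\simeq}\mathrm{Gr}_P^{n+i}$.

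Your secondary remark is the one that matches what the paper actually cites, namely Mellit's independent proof. Your sketch of that proof is somewhat off, though: Mellit's argument in \cite{Mellit2019} does not proceed by upgrading a point-count palindromy to an $\mathfrak{sl}_2$-action; it goes through an explicit cell decomposition of the character variety that yields the filtered cup-product isomorphism directly. The palindromy-from-counting picture you describe is closer to the original numerical evidence of Hausel--Rodriguez-Villegas, which gave the equality of dimensions but not the cup-product statement.
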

The curious Hard Lefschetz theorem was first observed in rank 2 by Hausel and Rodriguez-Villegas \cite{HauselRodriguez-Villegas2008} and  proved by Mellit in full generality in \cite{Mellit2019}.
De Cataldo, Hausel and Migliorini \cite{deCataldoHauselMigliorini2012} conjectured that the curious Hard Lefschetz symmetry could be explained in terms of a real analytic isomorphism of $\mb(r,n)$ with another moduli spaces of very different nature, the \ita{Dolbeault moduli space}
$$\mdol(r,n)=\left\lbrace (E,\theta) \text{ semistable Higgs bundle of rank }r \text{ and degree }n\right\rbrace/\sim_S,$$ parametrizing equivalence classes of semistable Higgs bundles of rank $r$ and degree $n$.
It is well known that, under the coprimality hypothesis, $\mdol(r,n)$ is a nonsingular quasi-projective variety of dimension $2(r^2(g-1)+1)$ endowed with a holomorphic symplectic form and it is equipped with a proper Lagrangian map $h: \mdol(r,n)\rightarrow \mathcal{A}\cong\mathbb{A}^{r^2(g-1)+1}$, which is called \ita{Hitchin fibration}.

In fact, any Higgs field $\theta$ is a twisted endomorphism of a vector bundle $E$, so it has a well defined characteristic polynomial, whose coefficients are  $\mathrm{tr}(\theta)\in H^0(C,\Omega^1_{\Sigma})$, $\mathrm{tr}(\Lambda^2\theta)\in H^0(\Sigma,(\Omega^1_{\Sigma})^{\otimes 2}), \ldots ,\det(\theta)\in H^0(\Sigma,(\Omega^1_{\Sigma})^{\otimes n})$.
This defines a map
$$	h:  \mdol(r,n) \to  \mathcal{A}, \quad (E,\theta) \mapsto \mathrm{charpol}(\theta).$$
The celebrated Beauville-Narasimhan-Ramanan (BNR) \cite{BNR89} correspondence states that the fiber $h^{-1}(a)$ of $h$ over a general point $a\in \mathcal{A}$ is isomorphic to the compactified Jacobian $\overline{J}_{C_a}$ of a branched covering $C_a$ of $\Sigma$, called \ita{spectral curve}.

The following theorem is the coronating result of non abelian Hodge theory and is understood as a sum of results of Donaldson, Corlette, Hitchin and Simpson, see \cite{Donaldson83,Corlette88, Hitchin87, Simpson90}.
\begin{thm}[Non abelian Hodge theorem]
For all $r$ and $n$ as above there exists a real analytic isomorphism 
$$\mdol(r,n)\cong \mb(r,n).$$
\end{thm}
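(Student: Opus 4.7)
The plan is to factor the desired isomorphism through an intermediate moduli space $\mathcal{M}_{dR}(r,n)$ of (logarithmic) flat algebraic connections of appropriate rank and residue at $p$, following the classical non abelian Hodge strategy. The composition of two correspondences, Riemann--Hilbert on one side and the Kobayashi--Hitchin--Simpson type correspondence on the other, then yields a real analytic diffeomorphism $\mdol(r,n)\cong\mathcal{M}_{dR}(r,n)\cong\mb(r,n)$. It will not be a biholomorphism: these three spaces are the three complex structures of a single underlying hyperk\"ahler real manifold, and only two of them are compatible with each successive identification.

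First, I would invoke the Riemann--Hilbert correspondence to identify $\mathcal{M}_{dR}(r,n)$ with $\mb(r,n)$ as complex analytic spaces. To a logarithmic flat connection on a rank $r$ bundle with residue $\tfrac{n}{r}\mathrm{Id}$ at the puncture $p$, one associates its monodromy representation of the fundamental group of $\Sigma\setminus\{p\}$; the local monodromy around $p$ is then $e^{2\pi i n/r}I_r$, matching the twist in the definition of $\mb(r,n)$. That this is a complex analytic isomorphism at the moduli level is classical (Deligne, Simpson); the coprimality of $(r,n)$ ensures that every semistable object is stable, so no strictly semistable loci need to be identified.

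Next, I would establish the Dolbeault--de Rham correspondence. Given a stable Higgs bundle $(E,\theta)$ of rank $r$ and degree $n$, the theorems of Hitchin (rank $2$) and Simpson (general) produce a unique harmonic metric $h$ on $E$ solving Hitchin's self-duality equations
\[
F_h+[\theta,\theta^*_h]=-\mu\,\omega\,\mathrm{Id}_E,\qquad \bar\partial_E\theta=0,
\]
where $\omega$ is a K\"ahler form on $\Sigma$ and $\mu$ is the slope; the combination $D_h=\nabla_h+\theta+\theta^*_h$ is then a flat connection, defining the image point in $\mathcal{M}_{dR}(r,n)$. Conversely, given an irreducible flat connection $D$, the Corlette--Donaldson theorem produces a harmonic metric, and decomposing $D=\partial_h+\bar\partial_h+\phi+\phi^*_h$ into its $(1,0)$ and $(0,1)$ parts with respect to $h$ yields a Higgs pair $(\bar\partial_h,\phi)$ which one then checks to be polystable. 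These constructions are mutually inverse at the level of isomorphism classes.

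The main obstacle is to promote these pointwise existence statements to a real analytic isomorphism of moduli spaces. Two ingredients are needed. The first is analytic dependence of the solution to Hitchin's equations on the Higgs data; this is obtained by setting up the equations on appropriate Sobolev or $C^{k,\alpha}$ Banach manifolds of metrics and applying the implicit function theorem, using elliptic regularity to show that solutions are smooth and vary real analytically with parameters. The second is a careful matching of stability conditions and of the analytic versus algebraic moduli structures: in the coprime case this is cleanest, because Higgs-stability coincides with Higgs-polystability, irreducibility of representations corresponds to simplicity of Higgs bundles, and the GIT quotient on one side and the symplectic/hyperk\"ahler reduction on the other both produce smooth quasi-projective varieties with matching underlying real manifolds. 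The delicate bookkeeping of the logarithmic pole at $p$ and of the central twist $e^{2\pi i n/r}I_r$ under Riemann--Hilbert is what forces the degree $n$ on the Higgs side and requires the most care.
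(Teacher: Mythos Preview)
Your outline is the standard route to the non abelian Hodge theorem and is essentially correct as a sketch: factor through the de Rham moduli space, use Riemann--Hilbert on one side and the harmonic metric correspondence (Hitchin, Simpson, Donaldson, Corlette) on the other, with the coprimality hypothesis killing strictly semistable loci so that everything is smooth.

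However, the paper does not prove this theorem at all. It is quoted as a background result and attributed to the foundational works of Donaldson, Corlette, Hitchin and Simpson; the paper's own contributions begin only after this statement, taking the non abelian Hodge isomorphism as given. So there is nothing to compare your proposal against: what you have written is a reasonable summary of the content of the cited references, not an alternative to anything the paper does.
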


To explain the correspondence between curious Hard Lefschetz and the classical relative Hard Lefschetz theorem for $h$, De Cataldo, Hausel and Migliorini \cite{deCataldoHauselMigliorini2012} conjectured that the non abelian Hodge theorem should exchange the weight filtration on $H^*(\mb(r,n),\mathbb{Q})$ with the perverse filtration associated with the Hitchin map $h$ on $H^*(\mdol(r,n),\mathbb{Q})$ up to a trivial renumbering. 		
\begin{conj}[P=W conjecture] Let $r$ and $n$ be coprime integers and
	$$\psi: \mb(r,n)\rightarrow \mdol(r,n)$$
	be the real analytic isomorphism of the non abelian Hodge theorem. Then the associated isomorphism in cohomology 
	$$ \psi^*:H^*(\mdol(r,n),\mathbb{Q})\rightarrow H^*(\mb(r,n),\mathbb{Q})$$
	is such that, for all $k\in \mathbb{Z}$, 
	$$P_{k}H^*(\mdol(r,n),\mathbb{Q})\xrightarrow{\simeq} W_{2k}H^*(\mb(r,n),\mathbb{Q}).$$
\end{conj}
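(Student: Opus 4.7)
The plan is to follow the Maulik--Shen--Yin strategy. The overarching idea is to (i) reduce to checking the identity $P_k=W_{2k}$ on a set of multiplicative generators of $H^*(\mdol(r,n),\BQ)$, and (ii) carry out that check via a support-theorem analysis of the Hitchin fibration $h$.

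First I would invoke a tautological generation result (due to Markman, with the extensions needed in higher rank): the ring $H^*(\mdol(r,n),\BQ)$ is generated by K\"unneth components of Chern classes of a universal Higgs sheaf on $\mdol(r,n)\times\Sigma$. Under the non-abelian Hodge diffeomorphism $\psi$ these classes are sent to the analogous tautological classes on $\mb(r,n)$ built from a universal local system. The weight filtration is multiplicative under cup product, and the perverse filtration associated with $h$ is multiplicative as well (a theorem of de Cataldo--Migliorini, available here because $h$ is a proper Lagrangian fibration and hence satisfies relative Hard Lefschetz). It therefore suffices to verify $P_k=W_{2k}$ on each tautological generator. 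On the Betti side the weights are explicit, predicted by Deligne's mixed Hodge theory of character varieties from the Hodge types of $\pi_1(\Sigma)$ and of $\mathrm{GL}_r(\mathbb{C})$.

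The heart of the argument is the computation of the perverse degree of each tautological class on the Dolbeault side. Here I would apply a full-support theorem for $h$: every simple perverse summand in the BBD decomposition of $Rh_*\BQ_{\mdol}[\dim\mdol(r,n)]$ has support equal to the whole Hitchin base $\mathcal{A}$. Granting this, perverse degrees are detected on any Zariski open subset of $\mathcal{A}$, and the computation reduces to a calculation on a generic Hitchin fiber. By the Beauville--Narasimhan--Ramanan correspondence this fiber is the compactified Jacobian $\overline{J}_{C_a}$ of a smooth spectral curve, whose cohomology is governed by its abelian-variety structure; the degree of a tautological class restricted to $\overline{J}_{C_a}$ matches, term by term, the weight label predicted on the Betti side, closing the generator-wise check.

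The main obstacle is the full-support theorem. Ng\^o's support theorem for the elliptic locus (as used in the fundamental lemma) is insufficient, because one needs control over contributions from non-integral spectral curves and from the global nilpotent cone. I would follow Maulik--Shen--Yin in reducing the statement to a $\chi$-independence phenomenon for BPS sheaves on the moduli of one-dimensional sheaves on the local Calabi--Yau threefold $T^*\Sigma$: the relevant perverse summands turn out to be insensitive to the Euler-characteristic parameter, which forces full support. This reduction combines vanishing-cycle methods, hyperk\"ahler deformations relating the Hitchin system to its twists, and a relative Fourier--Mukai/autoduality symmetry on the Calabi--Yau side. Essentially all of the new analytic content of the proof is concentrated in this step; once the support theorem is in hand, the matching with weights and the multiplicativity argument are structurally transparent consolidations.
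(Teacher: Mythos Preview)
Your proposal has a fundamental gap: you assert that the perverse filtration for $h$ is multiplicative as ``a theorem of de Cataldo--Migliorini, available here because $h$ is a proper Lagrangian fibration and hence satisfies relative Hard Lefschetz.'' This is false. Relative Hard Lefschetz does not imply multiplicativity of the perverse filtration; as the paper emphasizes, establishing multiplicativity is one of the hardest steps in \emph{every} known proof of P=W, and it is precisely the content of Theorem~\ref{thm:theorem0.2}(2) in the Maulik--Shen--Yin argument. Once you take multiplicativity as given, the reduction to checking perversities of tautological generators is indeed the de~Cataldo--Maulik--Shen reformulation (Proposition~\ref{conj:equivpw}), and item~(i) there was already proved in \cite{deCataldoMaulikShen2019}; so your outline would leave nothing new to do, which is a sign that the hard part has been assumed away.

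Your description of the Maulik--Shen--Yin method is also not what they do. Their proof does not go through $\chi$-independence for BPS sheaves on a Calabi--Yau threefold; that circle of ideas belongs to Kinjo--Koseki and to the earlier Maulik--Shen proof. The MSY argument instead builds Fourier--Mukai projectors on the compactified Jacobian fibration over the \emph{elliptic} locus using Arinkin's Poincar\'e sheaf, proves a Fourier-vanishing condition that forces these projectors to recover the perverse filtration, and deduces both multiplicativity and the inclusion $C_\bullet\subseteq P_\bullet$ from the Chern grading of the Fourier transform together with Arinkin's convolution kernel. Full support (via Ng\^o plus Severi inequalities) is used, but only over the elliptic locus; the passage from the elliptic locus to the full Hitchin base is handled not by a global support theorem but by the parabolic/meromorphic comparison argument of Hausel--Mellit--Minets--Schiffmann. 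Your plan to prove full support over all of $\mathcal{A}$ via BPS methods is therefore both a different strategy and, in your sketch, orthogonal to the actual difficulty.
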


\par\noindent 
\gr{Notation.} For sake of readability, we fix the coprime integers $r,n$ and denote $\mdol(r,n)$ and $\mb(r,n)$ simply by $\mdol$ and $\mb$.  We will specify $r$ and $n$ in the notation only when they are relevant to the context.  

\subsection{Tautological classes}
		
In 2019, de Cataldo, Maulik and Shen  \cite{deCataldoMaulikShen2019} reduced the proof of P=W to the multiplicativity of the perverse filtration. One key tool is the generation of the cohomology of $\mdol$ by tautological classes due to Markman \cite{Markman2002}.
In particular, Markman described the generators of the cohomology of $\mdol$ in terms of the Chern character of the (normalized) universal bundle $\mathcal{U}$ on $\mdol\times \Sigma$ .
In particular given the diagram 
		\begin{center}
			\begin{tikzcd}
		& \mdol\times \Sigma \arrow[ld, "p_{\mathcal{M}}"'] \arrow[rd, "p_C"] &   \\
		\mdol &                                        & C
	\end{tikzcd}
	\end{center}

one defines for each $\gamma\in H^i(\Sigma,\mathbb{Q})$  a \ita{tautological class}
\begin{equation}\label{eq:taugen}
c_k(\gamma)=p_{\mathcal{M},*}(\mathrm{ch}_k(\mathcal{U})\cup p_\Sigma^{*}\gamma)\in H^{i+2k-2}(\mdol,\mathbb{Q})
\end{equation}
and proves that  $$\left\lbrace c_k(\gamma) \mid \gamma \in H^*(\Sigma,\mathbb{Q}), \ k \geq 0 \right\rbrace$$
is a set of generators for $H^{*}(\mdol,\mathbb{Q})$ and, consequently, so is for $H^*(\mb,\BQ)$ through the non abelian Hodge isomorphism.
		
On the Betti side, the weight filtration on $\mb$ has been computed by Shende in \cite{Shende17}. In particular he proves the following. 
\begin{thm}[Weight of tautological classes] The mixed Hodge structure on $H^*(\mb,\mathbb{Q})$ is of Hodge-Tate type and for all $\gamma \in H^i(\Sigma,\mathbb{Q})$ one has that 
$$ c_k(\gamma)\in \ ^k\mathrm{\mathrm{Hdg}}^{i+2k-2}(\mb).$$
Here $^k\mathrm{\mathrm{Hdg}}^{d}(\mb):= W_{2k}H^d(\mb,\mathbb{Q})\cap F_k H^d(\mb,\mathbb{C})\cap \overline{F}^d(\mb,\mathbb{C})$ denotes the classes of type $(k,k)$ in $H^d(\mb,\mathbb{Q})$.
\end{thm}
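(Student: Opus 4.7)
The proof proceeds by arithmetic methods, in the spirit of Hausel--Rodriguez-Villegas. First, I would establish the Hodge--Tate property of $H^{*}(\mb,\BQ)$: by the polynomial point count $|\mb(\BF_{q})|\in\BZ[q]$ (proved in rank $2$ by Hausel--Rodriguez-Villegas and in general by Mellit), Katz's criterion (appendix to \cite{HauselRodriguez-Villegas2008}) yields that the mixed Hodge structure on $H^{*}(\mb,\BQ)$ is of Hodge--Tate type and, crucially, that the Deligne weight filtration on singular cohomology matches the Frobenius-weight filtration on the $\ell$-adic \'etale cohomology of a smooth model $\mb/\Spec\BZ[1/N]$. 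This reduces all weight computations on the Betti side to computations of Frobenius eigenvalues.

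Next, I would identify the tautological classes with objects intrinsic to the Betti side. The coprimality hypothesis $(r,n)=1$ produces, via the universal representation of $\pi_{1}(\Sigma,p)$, a genuine universal local system $\BV$ on $\mb\times\Sigma$. Topologically $\psi^{*}\mathcal{U}\cong\BV$, so the tautological class is recomputed on the Betti side as the slant product
$$c_{k}(\gamma)\;=\;p_{\mathcal{M},*}\bigl(\mathrm{ch}_{k}(\BV)\cup p_{\Sigma}^{*}\gamma\bigr).$$

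The third step is to compute Frobenius weights. Spreading $\BV$ to an $\ell$-adic lisse sheaf $\BV_{\ell}$ on the arithmetic model, the standard construction of $\ell$-adic Chern classes puts $c_{k}(\BV_{\ell})$ in $H^{2k}_{\mathrm{et}}(\mb\times\Sigma,\BQ_{\ell}(k))$; the Tate twist $(k)$ encodes the fact that each $c_{k}(\BV_{\ell})$ is a Tate class with Frobenius eigenvalue $q^{k}$, hence of weight exactly $2k$ and of Hodge type $(k,k)$. Cup product with $p_{\Sigma}^{*}\gamma$ and proper push-forward $p_{\mathcal{M},*}$ are morphisms of mixed Hodge structure, and a careful bookkeeping of Tate twists through the slant product shows that $c_{k}(\gamma)\in H^{i+2k-2}(\mb,\BQ)$ is again a Tate class of type $(k,k)$, hence lies in $W_{2k}\cap F^{k}\cap\overline{F}^{k}=\,{}^{k}\mathrm{Hdg}^{i+2k-2}(\mb)$ by the arithmetic-to-Hodge comparison of the first step.

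The most delicate point is the construction of $\BV$ as a genuine local system on $\mb\times\Sigma$: on the Dolbeault side, coprimality is used precisely to promote the universal family from a $\mathrm{PGL}_{r}$-bundle to a $\mathrm{GL}_{r}$-bundle, and the analogous rigidification must be performed on the Betti side compatibly with the non-abelian Hodge correspondence. A secondary subtlety is that this correspondence is only a real-analytic diffeomorphism, so Hodge-theoretic information does \emph{not} transfer algebraically from $\mdol$ to $\mb$; the arithmetic setup of the first step is precisely what allows the Betti weight computation to be carried out independently of the algebraic mixed Hodge structure on $\mdol$.
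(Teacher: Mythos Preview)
The paper does not prove this result but attributes it to Shende \cite{Shende17}, whose argument is direct Hodge theory rather than arithmetic. Shende observes that the K\"unneth components of $\mathrm{ch}(\BV)$ can be expressed through the algebraic ``holonomy'' morphisms from (the pre-quotient of) $\mb$ to copies of $GL_r$, given by the universal matrices $A_i,B_i$; since pullback along an algebraic morphism is a morphism of mixed Hodge structures and the cohomology of $GL_r$ is explicitly Hodge--Tate with known weights, the type of each $c_k(\gamma)$ follows by inspection. The Hodge--Tate property of $H^*(\mb)$ is then a \emph{consequence} of this computation together with Markman's generation result, not a prior input.

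Your arithmetic route has two real gaps. First, Katz's theorem only says that a polynomial point count forces $E(\mb;x,y)\in\BZ[xy]$; because the $E$-polynomial is an alternating sum over cohomological degrees, this does \emph{not} prove that each individual $H^d(\mb)$ is Hodge--Tate, and without that your passage from ``Frobenius weight $2k$'' to ``Hodge type $(k,k)$'' in step~3 fails. (A minor point: the polynomial count for $GL_r$ twisted character varieties is already in \cite{HauselRodriguez-Villegas2008} for all ranks; Mellit's contribution is curious Hard Lefschetz.) Second, and more seriously, the universal local system $\BV$ does \emph{not} spread out to an $\ell$-adic lisse sheaf on the arithmetic model: a lisse $\BQ_\ell$-sheaf on $\Sigma_{\overline{\BF}_q}$ is a continuous representation of the \'etale fundamental group, whereas $\mb$ parametrizes complex representations of the discrete topological $\pi_1(\Sigma)$, and there is no functor carrying the latter to the former in families. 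Nor is $\BV$ obviously an algebraic vector bundle on $\mb\times\Sigma$ whose Chern classes one could spread---the Riemann--Hilbert correspondence that would give it a holomorphic structure fibrewise is transcendental in the $\mb$-direction. What \emph{is} algebraic is the tuple of holonomy maps, and once you use these to define the tautological classes you have essentially rediscovered Shende's argument, at which point the detour through $\BF_q$ buys nothing.
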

As a result there is a canonical decomposition of graded vector spaces
$$ H^*(\mdol,\mathbb{Q})\cong H^*(\mb,\mathbb{Q})=\bigoplus_{k,i} \ ^{k}\mathrm{Hdg}^i(\mb)$$
and the P=W conjecture can be rephrased as 
$$ P_k H^*(\mdol,\mathbb{Q})=\bigoplus_{k'\leq k} \ ^{k'}\mathrm{Hdg}^*(\mb).$$
Hence, one can split the resolution of the P=W conjecture in two separate problems (see \cite[Conjecture 0.3]{deCataldoMaulikShen2019}) on the Dolbeault moduli space.
\begin{prop}[Equivalent version of P=W]\label{conj:equivpw} The P=W conjecture is equivalent to the following statements 
\begin{enumerate}[(i)]
\item (Tautological classes) The tautological classes $c_k(\gamma)\in H^*(\mdol,\mathbb{Q})$ have perversity $k$ for all $k\geq 0$ and all $\gamma \in H^*(\Sigma,\mathbb{Q})$;
\item (Multiplicativity)  The perverse filtration is multiplicative, i.e. for all $l,k$ 
$$ \cup: P_kH^*(\mdol,\mathbb{Q})\times P_lH^*(\mdol,\mathbb{Q})\rightarrow P_{p+q}H^*(\mdol,\mathbb{Q}).$$
\end{enumerate} 
\end{prop}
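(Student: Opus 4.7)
Assume P=W. By Shende's theorem $c_k(\gamma)\in {}^k\mathrm{Hdg}^{i+2k-2}(\mb)\subseteq W_{2k}H^*(\mb,\BQ)$; transporting via the non-abelian Hodge correspondence and applying P=W, we land in $P_kH^*(\mdol,\BQ)$, proving (i). For (ii), the weight filtration on $H^*(\mb,\BQ)$ is multiplicative because cup product is a morphism of mixed Hodge structures, so $W_{2k}\cdot W_{2l}\subseteq W_{2(k+l)}$; the identification $P_k\cong W_{2k}$ then transports multiplicativity to the perverse filtration, yielding (ii).

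\noindent\textbf{Reverse direction, easy inclusion.} Assume (i) and (ii). By Markman's theorem the tautological classes $\{c_k(\gamma)\}$ generate $H^*(\mdol,\BQ)\cong H^*(\mb,\BQ)$ as a graded ring, and by Shende each monomial $\prod_j c_{k_j}(\gamma_j)$ has pure Hodge type $(\sum_j k_j,\sum_j k_j)$. Since $H^*(\mb,\BQ)$ is of Hodge--Tate type it splits canonically as $\bigoplus_{k'}{}^{k'}\mathrm{Hdg}^*(\mb)$, and hence $W_{2k}H^*(\mb,\BQ)$ is precisely the subspace spanned by monomials with $\sum_j k_j\leq k$. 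Hypothesis (i) gives $c_{k_j}(\gamma_j)\in P_{k_j}$, and hypothesis (ii) then yields $\prod_j c_{k_j}(\gamma_j)\in P_{\sum_j k_j}\subseteq P_k$. Via the non-abelian Hodge isomorphism this establishes $W_{2k}H^*(\mb,\BQ)\subseteq P_kH^*(\mdol,\BQ)$ for every $k$.

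\noindent\textbf{Reverse inclusion: the main obstacle.} The harder half is to upgrade this inclusion to an equality. The plan is to leverage the two Hard Lefschetz symmetries available: the curious Hard Lefschetz theorem on the Betti side, realized by cupping with a weight-$4$ tautological class $\sigma$, and the relative Hard Lefschetz theorem on the Dolbeault side, realized by cupping with the pullback of an ample class from $\mathcal{A}$, which by (i) is a tautological class of perversity $2$. Both symmetries are centered at the same midpoint $k=\dim\mathcal{A}=\dim\mdol/2$ and, after identifying the two Lefschetz operators through the non-abelian Hodge isomorphism, they are implemented by cupping with the \emph{same} class $\sigma$. Combined with the inclusion $W_{2k}\subseteq P_k$ already obtained, this $\mathfrak{sl}_2$-compatibility matches the two Lefschetz decompositions and forces equality of graded dimensions at each level, hence $W_{2k}=P_k$.

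\noindent The main technical point I anticipate is precisely the verification that the curious Lefschetz class on the Betti side really corresponds, under non-abelian Hodge, to a relative Lefschetz class on the Dolbeault side, together with a careful comparison of the primitive subspaces inside the two filtrations. Once this matching of $\mathfrak{sl}_2$-structures is in place, the passage from $W_{2k}\subseteq P_k$ to $W_{2k}=P_k$ is a formal consequence of Lefschetz duality and dimension counting on graded pieces.
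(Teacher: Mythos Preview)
The paper itself does not prove this proposition; it simply records the reformulation and cites \cite{deCataldoMaulikShen2019}. Your forward direction and the inclusion $W_{2k}\subseteq P_k$ in the reverse direction are correct and are exactly the standard argument: Shende plus Markman plus (i)+(ii).

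For the hard inclusion your overall plan is right, but there is a concrete error in how you set it up. The relative Hard Lefschetz operator for the Hitchin map is \emph{not} ``the pullback of an ample class from~$\mathcal{A}$'': the base $\mathcal{A}$ is an affine space, so $H^2(\mathcal{A},\BQ)=0$ and no such class exists; and in any case a class pulled back from the base lies in $P_0$, not $P_2$. Relative Hard Lefschetz is implemented by a \emph{relatively ample} class $\eta\in H^2(\mdol,\BQ)$, i.e.\ one that restricts to an ample class on the fibres of $h$. This is not the same thing at all, and (i) alone does not identify $\eta$.

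What actually makes the argument close is the point you flag at the end: the curious Hard Lefschetz class $\sigma$ is a specific tautological class (of Chern grade~$2$, hence in $P_2$ by the inclusion already proved), and one must show that this \emph{same} $\sigma$ induces the Hard Lefschetz isomorphisms on $\mathrm{Gr}^P_\bullet$, not merely on $\mathrm{Gr}^W_{2\bullet}$. This is a genuine geometric input---essentially that $\sigma$ is $h$-relatively ample---and it is not a formal consequence of (i) and (ii). Once it is in place, both filtrations satisfy the defining property of the weight filtration associated to the nilpotent operator $\sigma$ on $H^*(\mdol,\BQ)$, centred at $N=\dim\mathcal{A}$, and Deligne's uniqueness of that filtration forces $W_{2\bullet}=P_\bullet$. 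Your final paragraph has the right shape; what needs correcting is the description of the Dolbeault Lefschetz class and the recognition that ``$\sigma\in P_2$'' is strictly weaker than ``$\sigma$ is a relative Hard Lefschetz operator for $P_\bullet$''.
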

Since in the same paper de Cataldo, Maulik and Shen prove the first item of Proposition \ref{conj:equivpw}, thus showing that the proof of the P=W conjecture reduces to that of the multiplicativity of the perverse filtration in the second item. 

\begin{rmk}
While the weight filtration is naturally multiplicative, in general the perverse filtration is not, see \cite[Exercise 5.6.8]{deCataldo2016}. Indeed, etablishing multiplicativity of $P_\bullet$ is one the most challenging points in all existing proofs of P=W. 
\end{rmk}

\subsubsection{History of proofs of P=W}  
The P=W conjecture has been proved in 2010 in rank 2 in the original paper by de Cataldo, Hausel and Migliorini \cite{deCataldoHauselMigliorini2012} and by the same authors in genus 1 and arbitrary rank \cite{deCataldoHauselMigliorini2013}. 
The conjecture remained untackled for almost ten years when de Cataldo, Maulik and Shen proved it for genus 2 and arbitrary rank \cite{deCataldoMaulikShen2019}. Moreover, an enumerative approach was proposed by Chiarello, Hausel and Szenes in \cite{CHS2020}.
Later, two different proofs of the conjecture in full generality appeared in September 2022: the first is due to Maulik and Shen \cite{MaulikShen2022}, while the second is due to Hausel, Mellit, Minets and Schiffmann \cite{HMMS2022}. 
The last proof appeared a year later and is due to Maulik, Shen and Yin \cite{MSY}.
While here we do not treat the proofs of Maulik-Shen and Hausel-Mellit-Minets-Schiffmann as they are reviewed in the excellent Séminaire Bourbaki paper \cite{surveyciky} (see also \cite{Mauri23, MiglioUmi}), in section \ref{sec:pc} we give a detailed sketch of the proof by Maulik, Shen and Yin \cite{MSY} as it opens up to a generalization of P=W in the more general context of Perverse=Chern phenomena. 

\subsubsection{Generalizations of the conjecture to the non coprime case}\label{sec:piwi}
		
	 One may wonder whether the picture described in the previous section holds for the honest character variety of $\mathrm{GL}_r(\mathbb{C})$ which corresponds to the Higgs bundles of rank $r$ and degree 0. 
		In general, when $r$ and $n$ are not coprime the moduli spaces are no longer smooth and the cohomology groups of $\mdol(r,n)$ and $\mb(r,n)$ do not satisfy the relative and curious Hard Lefschetz theorems anymore.
		
		Nonetheless, it is known that the relative Hard Lefschetz theorem for $h$ holds for \ita{intersection cohomology} $IH^*(\mdol(r,n),\mathbb{Q})$, see \autoref{sec:appendix}. \\
		Intersection cohomology is a generalization of cohomology designed to restore the Hodge theoretic properties which cease to hold for the cohomology of singular varieties, such as Poincaré duality. Moreover, the intersection cohomology groups of an algebraic variety carry a mixed Hodge structure and, if the variety is projective, they satisfy Hard Lefschetz theorem and admit a pure Hodge structure.
		
		This suggests that a natural formulation for the P=W conjecture in the singular case would involve this invariant.
		Another evidence of this fact was provided by de Cataldo and Maulik in  \cite{deCataldoMaulik2018}, where they proved that the perverse filtration on intersection cohomology is independent of the complex structure of the curve $\Sigma$, exactly as it happens for the weight filtration. As a result, they conjectured \cite[Question 4.1.7]{deCataldoMaulik2018} the following statement.
		
		\begin{conj}[PI=WI conjecture]\label{PI=WIconj} Let $r,n$ be two non necessarily coprime integers with $r>0$ Let $\psi: \mb(r,n)\rightarrow \mdol(r,n)$
			be the real analytic isomorphism of the non abelian Hodge theorem. Then the associated isomorphism in intersection cohomology 
			$$ \psi^*:IH^*(\mdol(r,n),\mathbb{Q})\rightarrow IH^*(\mb(r,n),\mathbb{Q})$$
			is such that, for all $k\in \mathbb{Z}$, 
			\[P_{k} IH^*(\mdol,(r,n) ,\mathbb{Q})\xrightarrow{\simeq} W_{2k}IH^*(\mb(r,n), \mathbb{Q}).\]
		\end{conj}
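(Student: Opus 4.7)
The plan is to reduce Conjecture \ref{PI=WIconj} to the coprime case—already proven by Maulik-Shen and Hausel-Mellit-Minets-Schiffmann—by extending the Maulik-Shen-Yin P=C strategy to intersection cohomology. The overall idea is to produce both filtrations from a common topological source (tautological Chern classes on a relative compactified Jacobian) that is preserved by the non-abelian Hodge diffeomorphism $\psi$, so that the equality $P_\bullet\simeq W_{2\bullet}$ in coprime rank propagates to the singular setting.

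First, I would analyze the Hitchin map $h:\mdol(r,n)\to\CA$ via the decomposition theorem, writing $Rh_*IC(\mdol(r,n))$ as a direct sum of IC sheaves on closed subvarieties of $\CA$. A Ngô-type support theorem, combined with the $\chi$-independence results for BPS sheaves developed around the original P=W proofs, should identify the non-trivial supports with endoscopic loci indexed by divisors of $\gcd(r,n)$, each contribution being controlled by the cohomology of smaller, coprime Dolbeault moduli spaces. Next, I would establish a Perverse=Chern statement for $IH^*(\mdol(r,n),\BQ)$: after restricting to the elliptic locus $\CA^{\mathrm{ell}}\subseteq\CA$, one considers a universal twisted sheaf on the relative compactified Jacobian whose Chern characters generate a Chern filtration $C_\bullet$ on $IH^*$; imitating the Maulik-Shen-Yin Fourier-Mukai/vanishing cycle argument should give $P_\bullet = C_\bullet$ over the elliptic locus, after which one globalises through the stratification by support.

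On the Betti side, Shende's weight computation for tautological classes should extend to $IH^*(\mb(r,n),\BQ)$ via mixed Hodge module techniques applied to the natural stratification of $\mb(r,n)$, so that the analogues of the classes $c_k(\gamma)$ lie in $W_{2k}IH^{*}(\mb(r,n),\BQ)$. Since $\psi$ is a real-analytic diffeomorphism and Chern characters of a topologically fixed universal family are preserved under it, the matching of $P_\bullet$ and $W_{2\bullet}$ reduces to comparing two filtrations with the same set of generators and the same multiplicativity. Multiplicativity of $P_\bullet$ on $IH^*$ would follow automatically from the identification $P_\bullet = C_\bullet$, resolving the intersection-cohomological analogue of Proposition \ref{conj:equivpw}.

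The hard part lies in the interaction between endoscopy and the transcendental nature of the non-abelian Hodge correspondence. On the Dolbeault side, the decomposition of $IH^*$ by supports and the P=C identification are genuinely algebro-geometric, relying on vanishing cycles and Fourier-Mukai; on the Betti side no such algebraic decomposition is available, and there is no a priori reason that $\psi$ should respect the endoscopic summands. Overcoming this likely requires a purely topological reformulation of the support decomposition, possibly via a BPS-type filtration that can be defined on both sides of the non-abelian Hodge correspondence, or a direct extension of the Maulik-Shen-Yin argument to intersection complexes of singular compactified Jacobians—both of which remain substantially open problems.
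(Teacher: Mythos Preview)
The statement you are attempting to prove is labelled as a \emph{conjecture} in the paper, and the paper does not contain a proof of it. Immediately after stating Conjecture~\ref{PI=WIconj}, the paper says explicitly that PI=WI is known only when the moduli space admits a symplectic resolution (the cases $g=1$ arbitrary rank, and $g=2$, $r=2$), and later remarks that Davison has shown PI=WI is equivalent to PS=WS, which in turn would follow from a Betti $\chi$-independence statement that is itself open. So there is no ``paper's own proof'' to compare your proposal against.

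As for the proposal itself, it is a reasonable outline of a research programme but not a proof, and you acknowledge as much in your final paragraph. Several of the intermediate steps are genuinely unknown: Markman's generation result for tautological classes is proved only for the smooth (coprime) moduli space and there is no established analogue for $IH^*$ in the singular case; Shende's computation of the weights of tautological classes has not been carried out for intersection cohomology; and the Ng\^{o}-type support analysis you invoke requires the total space of the fibration to be nonsingular, which fails precisely when $\gcd(r,n)>1$. The decisive obstruction you identify at the end---that the non-abelian Hodge diffeomorphism is not known to respect any endoscopic or BPS-type decomposition on the Betti side---is exactly the reason the conjecture remains open, and your proposal does not offer a way around it.
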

		
		At present, the PI=WI conjecture has been proved for $d=0$ by the author and Mauri in \cite{FelisettiMauri} for moduli spaces which admits a symplectic resolution of singularities, namely when $g=1$ and $r$ is arbitrary and $g=2$, $r=2$. Remarkably, these cases can be viewed as degenerations of the known few examples of \ita{irreducible symplectic manifolds}\footnote{Roughly speaking, a manifold is irreducible symplectic or, equivalently, hyperk\"ahler if it has three complex structures interacting quaternionically, see \cite{Beauville84} for a precise definition.} up to deformation, as we will see in more detail in \autoref{sec:pwcompact}.
		
		\begin{rmk}
		Recently Davison showed that, under some conjectural hypotheses, the P=W conjecture is equivalent to the PI=WI conjecture by a phenomenon called \ita{$\chi$-independence}, which will be treated in \autoref{sec:pwenumerative}.
		\end{rmk}

\subsection{P=W phenomena}

After the P=W conjecture was formulated, the attempts to prove it have led to consider the P=W statement under different perspectives giving rise to P=W phenomena. 
Very vaguely speaking, a P=W phenomenon should involve:
\begin{itemize}
    \item A complex variety (or a stack) $M$ of even dimension $2N$ and a proper algebraic map $f:M\to B$: this is required so that some cohomology theory of $M$ (e.g. Betti cohomology, intersection cohomology, Borel--Moore homology) satisfies the relative Hard Lefschetz theorem;
    \item An isomorphism $\Psi$ between the cohomology $\mathbb{H}$ of $M$ and the cohomology $\mathbb{H}'$ of another variety (or even an endomorphism of the cohomology of $M$) such that $\mathbb{H}'$ has mixed Hodge structure (or simply a filtration $W_\bullet$) which satisfies the curious Hard Lefschetz property. 
    \item A correspondence $P_k\mathbb{H}\cong W_{2k}\mathbb{H}'$ induced by $\Psi$.
\end{itemize}

 We here choose to focus on three different manifestations of P=W phenomena:
\begin{itemize}
\item P=W in the compact symplectic case;
\item stacky P=W conjecture and enumerative invariants;
\item Perverse=Chern phenomena.
\end{itemize}

Other P=W phenomena involve, for example, the \ita{geometric} P=W conjecture \cite{KatzarkovNollPanditEtAl2015, MMS,Szabo2018}; a P=W for Painlevé space \cite{Szabo2019, NemethiSzabo2020}; an analogue of the P=W conjecture for compact abelian varieties \cite{BKU2023}.

\subsection*{Acknowledgments}
The author wishes to thank Lucien Hennecart for answering many questions regarding enumerative P=W phenomena and the anonimous referee for the precious comments to the first draft of this paper.
\section{P=W phenomena in the compact setting}\label{sec:pwcompact}
One of the key ingredients in reducing proof of P=W to establishing the multiplicativity of $P_\bullet$ with respect to cup product is the fact that one can view the Dolbeault moduli space as a special fiber of a \textquotedblleft degeneration\textquotedblright\ having a compact irreducible symplectic manifold as generic object. Here by degeneration we mean a flat (not necessarily proper) morphism of normal algebraic varieties, typically over a curve. 
		
		\subsection{Degeneration of Hitchin systems to Mukai systems}
		The proposal of using the extensively studied geometry of irreducible symplectic manifolds to understand that of moduli spaces of non abelian Hodge theory finds its roots in a work by Donagi, Ein and Lazarsfeld in 1997
		\cite{DEL97}, where they exhibit the first instance of the aforementioned degenerations. For a more detailed and general description see for instance \cite{deCataldoMaulikShen2019,deCataldoMaulikShen2020,FelisettiMauri}.
		
		The compact irreducible symplectic manifolds appearing in the degenerations are Mukai moduli spaces of sheaves on a K3 (or abelian) surface. Given any coherent sheaf $\mathcal{F}$ on a K3 surface $S$, one can associate to it a \ita{Mukai vector} $$v = (\mathrm{rk}(\mathcal{F}), c_1(\mathcal{F}), \chi(\mathcal{F})-\mathrm{rk}(\mathcal{F})) \in H^{*}_{\mathrm{alg}}(S, \mathbb{Z}).$$ We denote by $M_v(S)$ the moduli space of Gieseker semistable sheaves on $S$ with Mukai vector $v$ for a sufficiently general polarization $H$ (which is tipically omitted in the notation); see \cite[\S 1]{Simpson1994I}.  
		
		The curve $\Sigma$ embeds in $S$ as an ample divisor with the map
		\begin{equation*}
			j: \Sigma \hookrightarrow S.
		\end{equation*}
		As in \cite{DEL97}\footnote{In \cite{DEL97}, the divisor is supposed very ample, but this assumption can in fact be dropped.}, we consider the degeneration of $S$ to the normal cone of $\Sigma$ in $S$ (see, e.g. \cite{Fulton98}) which is given by the family
		$$ \mathcal{S}=\left(\mathrm{Bl}_{\Sigma\times 0}S\times \Aff^1\right)\setminus \left( S\times 0\right)\rightarrow \Aff^1.$$
		The central fiber $\mathcal{S}_0$ is isomorphic to the cotangent space $T^*\Sigma$: in fact, since $K_S$ is trivial, the adjunction formula implies that the normal bundle of $\Sigma$ in $S$ coincides with $K_{\Sigma}$, whose total space is $T^*\Sigma$. 
		The restriction to $\Aff^1 \setminus \{0\}$ is a trivial fibration
		$S\times (\Aff^1 \setminus \{0\})\rightarrow \Aff^1 \setminus \{0\}$.
		Take a relative compactification $\mathcal{S} \subset \overline{\mathcal{S}}$ over $\Aff^1$. For all $t\in \Aff^1$, set $\beta_t=r[\Sigma]\in H_2(\mathcal{S}_t,\mathbb{Z})$ with $r>0$ and consider  $$\mathcal{M} \to \Aff^1,$$ the coarse relative moduli space of one-dimensional Gieseker semistable sheaves $\mathcal{F}$ whose support is proper and contained in  $\mathcal{S}_t \subseteq \overline{\mathcal{S}}_t$ with $\chi(\mathcal{F})=\chi$ and $[\mathrm{Supp}\mathcal{F}]=\beta_t$; see \cite[Theorem 1.21]{Simpson1994I}. 
		Choosing $\chi$ appropriately, the central fiber recovers the Dolbeault moduli space  
		$$\mathcal{M}_0 \simeq \mdol(r,n).$$
		In fact the moduli space of Higgs bundles on $\Sigma$ of rank $r$ and degree $n$ can be realized via the BNR-correspondence \cite{BNR89} as the moduli space of one-dimensional Gieseker-semistable sheaves $\mathcal{F}$ on $T^*\Sigma$ with $\chi(\mathcal{F})=\chi$ and $[\mathrm{Supp}\mathcal{F}]=\beta_0$. The general fiber is isomorphic to 
		\[\mathcal{M}_t \simeq M_v(S)\]
		with Mukai vector $v=(0,r\Sigma, \chi)$.
		In this degeneration, if $\dim M_v(S)=2N$, the natural forgetful Lagrangian fibration
		$$\pi: M_v(S)\rightarrow \mathbb{P}^{N}, \quad \mathcal{F}\mapsto [\mathrm{Supp}\mathcal{F}]$$
		is sent to the Hitchin fibration $h:\mdol(r,n)\rightarrow \mathcal{A}$.\\
		
		It is then natural to ask the following question: 
		
		\gr{Question: } Can we see a manifestation of the P=W correspondence in the compact setting? More generally, given a compact irreducible symplectic manifold with a Lagrangian fibration, do we have a P=W phenomenon?\\
		
		\subsection{P=W phenomena for irreducible holomorphic symplectic manifolds}
		The answer to this question was first provided by Shen and Yin in \cite{ShenYin2018}. 
		Given a nonsingular irreducible symplectic manifold $M$ with a Lagrangian fibration $\pi:M\rightarrow B$, the cohomology groups of $M$ admit a pure Hodge structure.  Moreover, one can consider the perverse filtration on $H^*(M,\mathbb{Q})$ associated with $\pi$. As a result, we have well defined  Hodge numbers and perverse Hodge numbers as 
        $$ h^{i,j}(M)=\dim Gr_{F}^i H^{i+j}(M,\mathbb{C}),\quad \ ^{\mathfrak{p}}h^{i,j}(M)=\dim Gr_P^i H^{i+j}(M,\mathbb{C}).$$		
		Using a deformation argument on the \ita{period domain} $D$ parametrizing all possible hyperk\"ahler structures on $M$, Shen and Yin prove that, given a compact irreducible symplectic manifold with a Lagrangian fibration, the perverse numbers associated with the fibration match with the Hodge numbers of the total space, establishing a Perverse=Hodge phenomenon.
		\begin{thm}\label{thm:numpwcompact} Let $\pi:M\rightarrow B$ be a Lagrangian fibration from a nonsingular compact irreducible symplectic manifold to an algebraic variety $B$ and let $P_\bullet$ be the perverse filtration associated with $\pi$. Then
			\begin{equation} \ ^\mathfrak{p} h^{i,j}(M)=h^{i,j}(M).
			\end{equation}
		\end{thm}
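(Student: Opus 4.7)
The plan is to combine the BBD decomposition theorem, Matsushita's structural results on Lagrangian fibrations, and a hyperk\"ahler rotation argument within the period domain.

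First I would apply the BBD decomposition theorem to the proper map $\pi\colon M\to B$ (with $2n=\dim M$, so of relative dimension $n$):
\[R\pi_*\mathbb{Q}_M[n]\cong\bigoplus_{k=-n}^{n}\mathcal{P}_k[-k].\]
This identifies the perverse graded pieces of $H^*(M,\mathbb{Q})$ with hypercohomology of perverse sheaves on $B$, and Saito's mixed Hodge module formalism endows each $\mathcal{P}_k$ with a canonical polarized Hodge structure. Combined with a support theorem for Lagrangian fibrations (together with Hwang's theorem $B\cong\mathbb{P}^n$ in the projective case) each $\mathcal{P}_k$ is the IC extension of a polarized $\mathbb{Q}$-variation of Hodge structure on the smooth locus of $\pi$, so that $^{\mathfrak{p}}h^{i,j}(M)$ is computed as a sum of Hodge numbers of these IC sheaves.

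Second, I would reinterpret the right hand side $h^{i,j}(M)$ using Matsushita's identification $R^i\pi_*\mathcal{O}_M\cong\Omega^i_B$. This produces an explicit expression for $h^{i,j}(M)$ in terms of Dolbeault cohomology of $B\cong\mathbb{P}^n$ and reduces the equality $^{\mathfrak{p}}h^{i,j}(M)=h^{i,j}(M)$ to a comparison of two $\mathfrak{sl}_2$-triples acting on $H^*(M,\mathbb{C})$: the relative Hard Lefschetz triple attached to the semipositive class $L=\pi^*\omega_{\mathbb{P}^n}$, whose weight decomposition computes $^{\mathfrak{p}}h^{i,j}$, and the Lefschetz triple attached to a K\"ahler class $\eta\in H^{1,1}(M)$, whose interaction with the Hodge decomposition produces $h^{i,j}$.

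Third, I would match the two triples by deforming along the period domain $D$ of hyperk\"ahler structures on $M$. On the hypersurface $D_\pi\subset D$ where $[L]$ stays of type $(1,1)$ and nef the Lagrangian fibration persists, and I would argue that both Hodge numbers (being topological) and perverse Hodge numbers (by stability of the BBD decomposition in families via Deligne semisimplicity together with Saito's formalism) are constant along $D_\pi$. A hyperk\"ahler rotation within $D_\pi$ carries $[L]$ to a genuine K\"ahler class of a rotated complex structure $I'$, and in that complex structure the relative Hard Lefschetz triple for $\pi$ coincides with the absolute Lefschetz triple for the K\"ahler form $L$, so that the perverse and Hodge graded decompositions of $H^*(M,\mathbb{C})$ agree numerically.

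The main obstacle I anticipate is the deformation invariance of $^{\mathfrak{p}}h^{i,j}$ along $D_\pi$: one must control how the singular fibers of $\pi$ and the Hodge filtration on each IC sheaf $\mathcal{P}_k$ behave as the complex structure moves, and rule out any jumping of Hodge numbers of the perverse graded pieces even when $\pi$ itself degenerates within the family. Once this invariance is in place, the hyperk\"ahler rotation reduces the identity to the tautology that, for a K\"ahler class that is simultaneously a pullback, the absolute Lefschetz $\mathfrak{sl}_2$-triple on $H^*(M,\mathbb{C})$ coincides with the relative Hard Lefschetz triple, yielding $^{\mathfrak{p}}h^{i,j}(M)=h^{i,j}(M)$.
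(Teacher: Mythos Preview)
Your overall strategy---comparing two $\mathfrak{sl}_2$-actions on $H^*(M,\mathbb{C})$ via a deformation on the period domain---is the framework of Shen--Yin that the paper is summarizing. However, Step~3 contains a genuine error. The class $L=\pi^*\omega_B$ is \emph{isotropic} for the Beauville--Bogomolov form, $q(L)=0$; this is a topological condition preserved under any hyperk\"ahler rotation, and since K\"ahler classes satisfy $q>0$, no rotation can ever carry $L$ to a K\"ahler class. Thus the sentence ``a hyperk\"ahler rotation within $D_\pi$ carries $[L]$ to a genuine K\"ahler class'' is false as stated, and your proposed reduction to the absolute Hard Lefschetz theorem for $L$ does not go through.

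The actual mechanism is more algebraic and uses the full Looijenga--Lunts--Verbitsky algebra $\mathfrak{g}\cong\mathfrak{so}(4,b_2-2)$ acting on $H^*(M,\mathbb{Q})$. Cup product with the isotropic class $\beta=L$ is a nilpotent element of $\mathfrak{g}$ whose weight filtration is the perverse filtration (via the de Cataldo--Migliorini description); it completes to an $\mathfrak{sl}_2$-triple inside $\mathfrak{g}$ by pairing $\beta$ with a \emph{second} isotropic class (a relatively ample $\eta$, or equivalently $\sigma,\bar\sigma$), not by making $\beta$ itself K\"ahler. The Hodge grading is likewise the weight decomposition of a semisimple element of $\mathfrak{g}$. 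The ``deformation on the period domain'' is then the statement that these two semisimple elements are conjugate in $\mathfrak{g}$, hence have identical weight multiplicities. Matsushita's $R^i\pi_*\mathcal{O}_M\cong\Omega^i_B$ enters as the anchor pinning down one extreme strip of the bigrading, from which the LLV symmetry propagates the identity to all $(i,j)$.

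A minor point: your Step~1 invokes Hwang's theorem $B\cong\mathbb{P}^n$ and a support theorem. Neither is needed (the argument runs entirely through operators on global cohomology), and the paper in fact lists ``the cohomology of the base is that of a projective space'' as an \emph{application} of Theorem~\ref{thm:numpwcompact}, so taking it as input is at best inelegant.
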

		
		The above result has applications in several areas of algebraic geometry: for example it can be used to determine that the cohomology of the base of the Lagrangian fibration $\pi$ is that of a projective space, providing an answer to a question of Matsushita \cite{Matsushita99}; also the equality between perverse and Hodge numbers allows to compute enumerative invariants associated with specific Calabi-Yau threefolds, see \autoref{sec:pwenumerative}.
		
		Later, Harder, Li, Shen and Yin refined the result of Theorem \ref{thm:numpwcompact} by identifying the perverse filtration of a Lagrangian fibration on $M$ with the monodromy weight filtration of a maximally unipotent degeneration of compact irreducible symplectic manifolds \cite{HLSY2019}.
		
		In particular, the result of \cite{HLSY2019} implies the multiplicativity of the perverse filtration with respect to cup product on $H^*(M,\mathbb{Q})$. 
		
		\begin{thm}[Multiplicativity of the perverse filtration in the compact setting]
			Let $\pi:M\rightarrow B$ be a Lagrangian fibration from a nonsingular compact irreducible symplectic manifold to an algebraic variety $B$. The the perverse filtration $P$ associated with $\pi$ is multiplicative under cup product, i.e.
			$$\cup: \  P_kH^dM,\mathbb{Q})\times P_lH^{d'}(M,\mathbb{Q})\rightarrow P_{k+l}H^{i+j}(M,\mathbb{Q}).$$
		\end{thm}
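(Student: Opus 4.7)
The plan is to deduce the statement directly from the identification established by Harder, Li, Shen and Yin (recalled immediately above) of the perverse filtration $P_\bullet$ with the monodromy weight filtration $W_\bullet$ attached to a maximally unipotent degeneration $\mathcal{M}\to \Delta$ of compact irreducible symplectic manifolds whose general fiber recovers $M$. Granted this identification, the theorem reduces to a standard principle of Hodge theory: weight filtrations on mixed Hodge structures are automatically multiplicative under cup product. So the strategy is simply to transport the multiplicativity of $W_\bullet$ across the isomorphism $P_\bullet \cong W_\bullet$ (up to renumbering).

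Concretely, I would proceed as follows. By Schmid's nilpotent orbit theorem and Steenbrink's construction of the limit mixed Hodge structure associated with $\mathcal{M}\to \Delta$, the nilpotent operator $N = \log T_u$ coming from the unipotent part of the monodromy around the central fiber determines the monodromy weight filtration $W_\bullet$ on $H^*(M,\mathbb{Q})$, and together with the limit Hodge filtration it upgrades $H^*(M,\mathbb{Q})$ to a mixed Hodge structure. Since the cup product deforms flatly in families, the ring map
$$\cup:\ H^*(M,\mathbb{Q})\otimes H^*(M,\mathbb{Q})\longrightarrow H^*(M,\mathbb{Q})$$
is a morphism of limit mixed Hodge structures, hence strictly compatible with $W_\bullet$, so that $W_k\cdot W_\ell \subseteq W_{k+\ell}$. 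Invoking the HLSY identification of $P_\bullet$ with (a suitable renumbering of) $W_\bullet$ on each $H^d(M,\mathbb{Q})$ yields
$$\cup:\ P_kH^d(M,\mathbb{Q})\otimes P_\ell H^{d'}(M,\mathbb{Q})\longrightarrow P_{k+\ell}H^{d+d'}(M,\mathbb{Q}),$$
which is the desired conclusion.

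The substantive difficulty is not in this last formal step but is packaged inside the HLSY input. Establishing $P_\bullet = W_\bullet$ demands (i) the production of a maximally unipotent degeneration of hyperk\"ahler manifolds whose general fiber is $M$, which rests on the sophisticated deformation theory of irreducible symplectic manifolds, on the global Torelli theorem, and on a careful control of the position of the Lagrangian class in the K\"ahler cone; and (ii) a matching of the geometric perverse filtration associated with the Lagrangian fibration $\pi$ against the Hodge-theoretic weight filtration coming from the degeneration, typically via an identification of the nilpotent operator $N$ with cup product by $\pi^{*}\eta$ for an ample class $\eta$ on $B$, combined with the relative Hard Lefschetz theorem. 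Once these ingredients are in place, multiplicativity is a free consequence by the mixed-Hodge-theoretic argument above.
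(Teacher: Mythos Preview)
Your proposal is correct and matches the paper's own treatment: the paper does not give an independent proof but simply records that the theorem follows from the Harder--Li--Shen--Yin identification of $P_\bullet$ with the monodromy weight filtration of a maximally unipotent degeneration, exactly the deduction you spell out. Your added detail---that cup product is a morphism of limit mixed Hodge structures and hence respects $W_\bullet$---is the implicit step the paper leaves to the reader.
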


\begin{rmk} A categorification of Theorem \ref{thm:numpwcompact} formulated in terms of quasi-isomorphisms of complexes on the base $B$ has been conjectured in \cite{ShenYin23} and proved in \cite{Schnell23}.
\end{rmk}
\begin{rmk}
  Theorem \ref{thm:numpwcompact} has been generalized by Shen, Yin and the author to singular irreducible symplectic varieties which admit a symplectic resolution, again replacing cohomology by intersection cohomology and providing a compact counterpart of the PI=WI conjecture \cite{FelisettiShenYin2021}. Moreover, there are partial results when $M$ has isolated singularities \cite{Tighe22}.
\end{rmk}
  \begin{rmk}
  The P=W phenomena in the compact case, the singular generalizations of P=W and the proof of the Curious Hard Lefschetz theorem \cite{Mellit2019} suggest that the existence of a symplectic structure on $M$ should be a key ingredient in finding P=W phenomena. However, somehow remarkably, the symplectic structure on the moduli spaces plays no essential role in all existing proofs of the P=W conjecture. This may have an explanation in view of the P=C phenomena treated in section \ref{sec:pc}.
  \end{rmk}

\section{P=W phenomena in enumerative geometry}\label{sec:pwenumerative}
		
\subsection{From P=W phenomena to  enumerative geometry counting invariants}
		P=W phenomena have tremendous impact in enumerative geometry: for instance, perverse Hodge numbers play an important role in recent constructions of curve counting invariants.
		
		In the late 90's,  Gopakumar and Vafa defined numerical invariants $n_{g,\beta}\in\mathbb{Z}$  for a Calabi–Yau 3-fold $Y$ and a curve class $\beta\in H^2(Y, \mathbb{Z})$,  called \ita{GV invariants} or \ita{BPS invariants}.
		These invariants count genus $g$ curves on $Y$ lying in the curve class $\beta$, see \cite{GV}. Moreover they are expected to give an equivalent counting to that of the Gromov-Witten invariants (see \cite{PandariphandeThomas2014} for a detailed account of this fact).
		
	    In 2018, Maulik and Toda \cite{MaulikToda2018} proposed a definition of BPS-invariants via the Hilbert-Chow morphism 
		$$\mathrm{hc}:\mathcal{M}_{\beta}\rightarrow \mathrm{Chow}_{\beta}(Y)$$
		from the moduli space of 1-dimensional stable sheaves on $Y$ with support $\beta$ to the corresponding Chow variety. When $\mathcal{M}_{\beta}$ is nonsingular, Maulik and Toda prove an equality of power series
		\begin{equation} \label{GV}
			\sum_{i\in \mathbb{Z}}\chi( \ ^{\mathfrak{p}}\mathcal{H}^i(R\mathrm{hc}_*\mathbb{Q}_{\mathcal{M}_{\beta}}[\dim \mathcal{M}_{\beta}]))y^i=\sum_{g\geq 0}n_{g,\beta}\left(y^{\frac{1}{2}}+y^{-\frac{1}{2}}\right)^{2g}.
			\end{equation}
		The quantity on left hand side of \eqref{GV} is the Euler characteristic\footnote{For any bounded  constructible complex $K$ on an algebraic variety $Y$, one sets $\chi(K):= \sum (-1)^j \dim H^j(Y,K)$} of the \ita{perverse cohomology sheaves} (see \autoref{sec:appendix}) arising in the decomposition theorem of the map $\mathrm{hc}$ and it can be computed in terms of the perverse hodge numbers $^{\mathfrak{p}}h^{i,j}(\mathcal{M_{\beta}})$. \\
		If one considers $Y=S\times \mathbb{C}$ for a K3 surface $S$, then the topology of the map $\mathrm{hc}$ is uniquely determined by that of $\pi:M_v(S)\rightarrow \mathbb{P}^N$ where the Mukai vector on $S$ is chosen appropriately (so that $\beta$ is the support of the sheaves and $M_v(S)$ is nonsingular).  In that case $M_v(S)$ is deformation equivalent to the $N$-th Hilbert scheme of S, for 
	$N=\frac{1}{2}\beta^2+1$, so its Hodge numbers (and thus the perverse!) are known. 
		
		In other words, Theorem \ref{thm:numpwcompact} provides a direct calculation of the BPS invariants of a K3 surface in terms of the Hodge numbers of the Hilbert schemes of points on a K3 surface, see \cite{ShenYin2018} and \cite{deCataldoMaulikShen2019} for a detailed discussion on this fact.
		
		\subsection{From enumerative geometry to P=W phenomena}
		
		In the other direction, it is not to be underestimated the large amount of evidences that enumerative geometry has offered to the P=W conjecture, its variants and more generally to P=W phenomena.
		
		Let us start by an easy observation: two Betti moduli spaces $\mb(r,n)$ and $\mb(r,n')$ are Galois conjugate when $\gcd(r,n)=\gcd(r,n')=1$, thus the algebraic isomorphism between them induces an isomorphism of mixed Hodge structures
		$$H^*(\mb(r,n),\BQ)\cong H^*(\mb(r,n'),\BQ)$$
		between their cohomology groups.
		Hence, the P=W conjecture suggests that the perverse filtration on $H^*(\mdol(r,n),\BQ)$ should be independent of $n$ as long as it is coprime with $r$.  
		
		Using methods coming from cohomological Donaldson-Thomas theory \cite{Szendroi2016}, in 2021  Kinjo and Koseki \cite{KinjoKoseki2021} (see also \cite{COW}) proved that this is in fact true, providing new evidences of the P=W conjecture. This phenomenon is usually referred to as \ita{$\chi$-independence} since an invariant of the moduli space of Higgs bundles, such as the perverse filtration, depends only on the rank $r$ of the Higgs sheaves and it is not affected by choice of their Euler characteristic $\chi$. 
		The following result is \cite[Theorem 1.1]{KinjoKoseki2021}.
		\begin{thm}[$\chi$-independence for $\mdol(r,n)$]\label{chiindepcoprime}
			Let $r,n,n'$ be integers such that $r>0$ and $\gcd(r,n)=\gcd(r,n')=1$. Then there exists an isomorphism 
			$$ H^*(\mdol(r,n),\BQ)\cong  H^*(\mdol(r,n'),\BQ)$$
			preserving Hodge structures and the perverse filtration. 
		\end{thm}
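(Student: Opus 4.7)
The plan is to transfer the problem to cohomological Donaldson--Thomas theory on the local Calabi--Yau threefold $X = T^*\Sigma \times \mathbb{A}^1$, where a powerful $\chi$-independence theorem for BPS sheaves is already available. By the BNR correspondence recalled in the introduction, a semistable Higgs bundle of rank $r$ and degree $n$ on $\Sigma$ is the same as a one-dimensional semistable sheaf on $T^*\Sigma$, and embedding $T^*\Sigma \hookrightarrow T^*\Sigma \times \{0\} \subset X$, Davison's dimensional reduction produces a canonical BPS sheaf $\mathcal{BPS}_{r,n}$ on the Hitchin base $\mathcal{A}$, underlying a pure Hodge module. In the coprime case semistable equals stable and $\mdol(r,n)$ is smooth, so the stack of semistable Higgs bundles is a $\mathbb{G}_m$-gerbe over $\mdol(r,n)$ and one obtains an isomorphism
\[ Rh_*\mathbb{Q}_{\mdol(r,n)}[\dim \mdol(r,n)] \simeq \mathcal{BPS}_{r,n} \]
of mixed Hodge modules on $\mathcal{A}$, under which the perverse filtration on $H^*(\mdol(r,n), \mathbb{Q})$ and its mixed Hodge structure are both encoded by $\mathcal{BPS}_{r,n}$.

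The second step is to invoke the cohomological $\chi$-independence theorem of Davison (and subsequent strengthenings by Davison--Hennecart--Schlegel Mejia and by Maulik--Toda), which asserts an isomorphism $\mathcal{BPS}_{r,n} \simeq \mathcal{BPS}_{r,n'}$ of mixed Hodge modules on $\mathcal{A}$ depending only on the rank $r$. The core input is a support lemma showing that $\mathcal{BPS}_{r,n}$ is simple with full support $\mathcal{A}$, so it is determined by its restriction to the elliptic open locus $\mathcal{A}^{\circ}\subseteq \mathcal{A}$ where the spectral curve is smooth. Over $\mathcal{A}^{\circ}$, BNR identifies the Hitchin fiber with the Jacobian of the spectral curve, which is manifestly independent of the twisting integer $n$; the isomorphism then propagates globally to $\mathcal{A}$ by the simplicity.

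Concatenating these identifications yields
\[ H^*(\mdol(r,n), \mathbb{Q}) \simeq H^*(\mathcal{A}, \mathcal{BPS}_{r,n}) \simeq H^*(\mathcal{A}, \mathcal{BPS}_{r,n'}) \simeq H^*(\mdol(r,n'), \mathbb{Q}), \]
an isomorphism which by construction respects the mixed Hodge structure (inherited from the Hodge module lift of $\mathcal{BPS}$) and the perverse filtration (which matches the filtration by cohomological degree on $\mathcal{A}$). The hardest point is the first step: establishing the sheaf-level identification between the decomposition summands of $Rh_*\mathbb{Q}[\dim]$ and the BPS complex requires the full machinery of the integrality conjecture, a controlled passage from the Higgs stack to its good moduli space in the coprime regime, and a careful handling of monodromy in the vanishing cycle construction so that the isomorphism is genuinely one of mixed Hodge modules and not merely of underlying constructible complexes. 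A secondary subtlety is the support lemma: on $\mathcal{A}$ it requires a properness input for the Hitchin map together with the local freeness of the $\mathcal{BPS}$ sheaf over $\mathcal{A}^\circ$, which in turn rests on the smoothness of the relative compactified Jacobian over the elliptic locus.
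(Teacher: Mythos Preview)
The paper does not prove this statement itself; it records it as \cite[Theorem~1.1]{KinjoKoseki2021} and only indicates that the argument passes through cohomological Donaldson--Thomas theory on the threefold $\mathrm{Tot}(\CO_\Sigma \oplus \Omega^1_\Sigma)$. Your proposal follows exactly this route --- your $X = T^*\Sigma \times \BA^1$ is the same threefold --- so at the level of overall strategy you are aligned with the cited source.

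There is, however, a genuine error in your second step. You assert that the support lemma shows $\mathcal{BPS}_{r,n}$ is \emph{simple} with full support on $\CA$, and that this simplicity is what lets the isomorphism over $\CA^\circ$ propagate to all of $\CA$. But in the coprime case you have just identified $\mathcal{BPS}_{r,n}$ with $Rh_*\BQ_{\mdol(r,n)}[\dim \mdol]$, and by the decomposition theorem this is a direct sum of many shifted simple perverse sheaves, not a single one. The correct statement --- the one actually established (by Maulik--Shen in the $D$-twisted case via Ng\^o's support theorem and Severi inequalities, and extended by Kinjo--Koseki to $D = K_\Sigma$) --- is that every perverse cohomology sheaf ${}^{\Fp}\CH^i(Rh_*\BQ[\dim])$ has \emph{full support} on $\CA$, i.e.\ each simple summand is of the form $\mathcal{IC}_\CA(\CL)$ for some local system $\CL$ on a dense open. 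It is this full-support property, not simplicity, that makes the complex determined by its restriction to $\CA^\circ$ and allows the comparison of Jacobian fibrations there to globalize. Your argument is repaired by exactly this substitution, but as written the key mechanism is misidentified.

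A smaller point: the attributions in your second paragraph are off. The $\chi$-independence result in question is due to Kinjo--Koseki (the paper also points to \cite{COW}); Maulik--Toda concerns the \emph{definition} of BPS invariants via perverse sheaves, not their $\chi$-independence.
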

		
		When $r$ and $n$ are not coprime, as explained in section \ref{sec:piwi}, the natural invariant suggested by perverse sheaves theory is intersection cohomology. However,  motivated by physics, Chuang, Diaconescu and Pan \cite{CDP14} proposed to use \ita{BPS cohomology}: in fact, despite being just phisically defined, BPS cohomology is expected to enjoy a $\chi$-independence phenomenon without assuming coprimality between $r$ and $n$. Further, the BPS cohomology groups on both Dolbeault and Betti sides carry a Lie algebra structure (see \cite{Davison2020} and \cite{DHSM2022}) compatible with non abelian Hodge isomorphism \cite[Conjecture 1.5]{SS20}, opening the way to a possible representation theoretic approach to P=W problems.
		
		To overcome the difficulty of a rigorous mathematical definition, Kinjo and Koseki \cite{KinjoKoseki2021} propose a definition of BPS cohomology on $\mdol(r,n)$ using the cohomological Donaldson-Thomas theory of the 3-fold $\mathrm{Tot}(\mathcal{O}_\Sigma\oplus \Omega^1_\Sigma)$. Such as the intersersection cohomology of $\mdol(r,n)$ is defined as the hypercohomology of intersection complexes $\mathcal{IC}^{Dol}_{r,n}$, the BPS cohomology is the hypercohomology of another complex $\mathcal{BPS}^{Dol}_{r,n}$: 
		$$H^*_{BPS}(\mdol(r,n),\mathbb{Q}):=\mathbb{H}^{*}(\mathcal{BPS}^{Dol}_{r,n}).$$
		The complex $\mathcal{BPS}^{Dol}_{r,n}$ is a perverse sheaf and there is a natural injection
		$$ \mathcal{IC}^{Dol}_{r,n}\hookrightarrow \mathcal{BPS}^{Dol}_{r,n}$$
		which is an isomorphism in the coprime case.
		\begin{example} To see that in the non coprime case there is no isomorphism between the complexes it is sufficient to consider the moduli space $\mdol(2,0)$ on a genus 2 curve. Setting 
			\begin{align*}
				P^{IC}_t(X)&=\sum_i \dim(IH^i(X,\mathbb{Q}))t^i\\
				P^{BPS}_t(X)&=\sum_i \dim(H_{BPS}^i(X,\mathbb{Q}))t^i\,\
			\end{align*}
			one has that by \cite{Felisetti2018} and \cite{Rayan2018}
			\begin{align*}
				P^{IC}_t(\mdol(2,0))&=(1+t)^4(1+t^2+2t^4+2t^6)\\
				P^{BPS}_t(\mdol(2,0))&=(1+t)^4(1+t^2+4t^3+2t^4+4t^5+2t^6).\
			\end{align*}
			On the other hand, if one considers the coprime case of moduli space $\mdol(2,1)$ on a genus 2 curve then 
			$$ 	P_t(\mdol(2,1))=P^{IC}_t(\mdol(2,1))=P^{BPS}_t(\mdol(2,1))=P^{BPS}_t(\mdol(2,0))$$
			\end{example}
		
		Kinjo and Koseki generalize Theorem  \ref{chiindepcoprime} to BPS cohomology.
		\begin{thm}[\cite{KinjoKoseki2021}, Corollary 5.15]\label{chiindepnoncoprime}
			Let $r,n,n'$ be integers such that $r>0$. Denote by $h$ (resp. $h'$)$:\mdol(r,n)$(resp. $\mdol(r,n')$)$\rightarrow \mathcal{A}$ the Hitchin map.
			There exists an isomorphism of complexes on $\mathcal{A}$
			$$h_*\mathcal{BPS}^{Dol}_{r,n}\cong h'_*\mathcal{BPS}^{Dol}_{r,n'},$$
			inducing
			$$ H_{BPS}^*(\mdol(r,n),\mathbb{Q})\cong  H_{BPS}^*(\mdol(r,n'),\mathbb{Q})$$
			preserving Hodge structures and the perverse filtration. 
		\end{thm}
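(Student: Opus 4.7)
The plan is to reinterpret $\mathcal{BPS}^{Dol}_{r,n}$ via cohomological Donaldson--Thomas theory on the local Calabi--Yau threefold $Y := \mathrm{Tot}(\mathcal{O}_\Sigma \oplus \Omega^1_\Sigma)$, and then leverage a $\chi$-independence principle for BPS sheaves of one-dimensional sheaves on CY3 folds. By the BNR correspondence, $\mdol(r,n) \simeq M_Y(r[\Sigma], n)$ as a moduli of one-dimensional semistable sheaves on $Y$; the associated moduli stack is $(-1)$-shifted symplectic, so it carries a canonical vanishing cycle perverse sheaf. Using dimensional reduction (à la Davison and Kinjo--Masuda), this sheaf descends to a perverse sheaf on the coarse space, and its BPS subobject --- extracted through Davison's support lemma and the CoHA integrality theorem --- is precisely $\mathcal{BPS}^{Dol}_{r,n}$. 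In particular, $h_*\mathcal{BPS}^{Dol}_{r,n}$ can be identified with the restriction to the Hitchin base $\mathcal{A}$ of an intrinsic DT--BPS sheaf attached to the CY3 datum $(Y, r[\Sigma], n)$, upgraded canonically to a mixed Hodge module.

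Next, I would establish $\chi$-independence directly at the level of these perverse sheaves on $\mathcal{A}$. Tensoring by a degree-one line bundle on $\Sigma$ induces an isomorphism $M_Y(r[\Sigma], n) \simeq M_Y(r[\Sigma], n+r)$ over $\mathcal{A}$ which is compatible with the shifted symplectic structure and so identifies BPS sheaves; this disposes of the case $n' \equiv n \pmod{r}$. To interpolate between different residue classes modulo $r$, I would embed $Y$ as an open subset of an ambient projective CY3 (e.g.\ using the K3-compactifications from the degenerations of Section 2) and apply the sheaf-theoretic refinement of Toda's numerical $\chi$-independence (as developed by Davison--Hennecart--Schlegel Mejia), typically via hyperbolic localization for a suitable $\mathbb{C}^*$-action together with CoHA-valued wall-crossing identities. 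Composing these geometric isomorphisms yields the desired canonical $h_*\mathcal{BPS}^{Dol}_{r,n} \cong h'_*\mathcal{BPS}^{Dol}_{r,n'}$ in the derived category of mixed Hodge modules on $\mathcal{A}$.

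The three conclusions of the theorem are then immediate: taking $R\Gamma(\mathcal{A}, -)$ gives the isomorphism of BPS cohomology groups, the mixed Hodge module upgrade forces compatibility with Hodge structure, and the perverse Leray filtrations for $h$ and $h'$ are by definition determined by the complexes $h_*\mathcal{BPS}^{Dol}_{r,n}$ and $h'_*\mathcal{BPS}^{Dol}_{r,n'}$ on $\mathcal{A}$, hence are matched by any such isomorphism. The main obstacle lies in the second paragraph: while the shift $n \mapsto n+r$ is realised by a completely explicit isomorphism of moduli spaces, matching BPS sheaves across different residues modulo $r$ is genuinely subtle, since there is no geometric map between $M_Y(r[\Sigma], n)$ and $M_Y(r[\Sigma], n')$ when $n \not\equiv n' \pmod{r}$. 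This is where Davison's integrality theorem is indispensable: it characterises $\mathcal{BPS}^{Dol}$ intrinsically inside the CoHA, so that sheaf-level $\chi$-independence can be reduced to (and deduced from) a correspondence on the ambient DT stack, bypassing the absence of a direct isomorphism of coarse moduli.
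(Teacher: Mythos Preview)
The paper under review is a survey and does not prove this statement; it is quoted verbatim as \cite[Corollary 5.15]{KinjoKoseki2021} with no accompanying argument. So there is no ``paper's own proof'' to compare against.

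That said, a brief comment on your sketch versus the actual Kinjo--Koseki argument. Your first paragraph is broadly faithful to their setup: the BPS perverse sheaf is indeed constructed via cohomological DT theory on the local CY3 $Y=\mathrm{Tot}(\mathcal{O}_\Sigma\oplus\Omega^1_\Sigma)$, using dimensional reduction and the integrality/support machinery. Your second paragraph, however, diverges from what they actually do. Kinjo--Koseki do not compactify $Y$ inside a projective CY3 via K3 degenerations, nor do they invoke Davison--Hennecart--Schlegel~Mejia (which is later work). Their mechanism for crossing residue classes modulo $r$ is instead a deformation-to-the-normal-cone argument relating the local CY3 to a local K3 (the total space of $\Omega^1_\Sigma$ times $\mathbb{A}^1$), combined with Maulik--Shen's $\chi$-independence for one-dimensional sheaves on surfaces via support theorems. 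The CoHA wall-crossing and hyperbolic localisation you propose are plausible alternative tools, but they are not the route taken in the cited reference, and your sketch leaves the key step (matching BPS sheaves across residues) at the level of a strategy rather than an argument.
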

		
		It is now natural to wonder whether there is a formulation of the P=W conjecture for BPS cohomology and what would be the relation with P=W and PI=WI. This is understood if, in place of moduli spaces, one considers stacks.
		
		\subsubsection{All P=W conjectures at once}
		Let $\mathfrak{M}_{Dol}(r,n)$ and $\mathfrak{M}_B(r,n)$ be the stacks of respectively semistable Higgs bundles on $\Sigma$ and semisimple representations of its fundamental group into $\mathrm{GL}_r(\mathbb{C})$.
		
		In \cite{Davisonnahtstacks}, Davison conjectured the existence of an isomorphism $\Upsilon$ between the Borel-Moore homology groups of $\mathfrak{M}_{Dol}(r,0)$ and $\mathfrak{M}_B(r,0)$
		$$H^*_{\mathrm{BM}}(\mathfrak{M}_{Dol}(r,0))\cong H^*_{\mathrm{BM}}(\mathfrak{M}_{B}(r,0))$$
		and formulated a stacky version of the P=W conjecture, involving the natural weight filtration $W^{\bullet}$ on $H^*_{\mathrm{BM}}(\mathfrak{M}_B(r,0))$ and a suitably defined perverse filtration $\mathfrak{P}^{\bullet}$ on $H^*_{\mathrm{BM}}(\mathfrak{M}_{Dol}(r,0))$, see \cite[\S 1.3]{Davisonnahtstacks}.
		This is usually referred to as the stacky P=W conjecture PS=WS (we omit shifts for ease of the reader and refer to \cite[Conjecture B]{Davisonnahtstacks} for a precise statement).
		\begin{conj}[PS=WS conjecture]\label{psws}
			There exists an isomorphism
			$$\Upsilon: H^*_{\mathrm{BM}}(\mathfrak{M}_B(r,0),\BQ)\xrightarrow{\simeq}H^*_{\mathrm{BM}}(\mathfrak{M}_{Dol}(r,0),\BQ)$$
			such that
			$$\Upsilon(W_{2i}H^*_{\mathrm{BM}}(\mathfrak{M}_{B}(r,0),\BQ))=\mathfrak{P}_iH^*_{\mathrm{BM}}(\mathfrak{M}_{Dol}(r,0),\BQ)).$$
		\end{conj}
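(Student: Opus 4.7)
The plan is to deduce the stacky PS=WS conjecture from the coprime P=W theorem by means of cohomological integrality (a PBW-type decomposition of Borel--Moore homology of the stacks into BPS building blocks) together with $\chi$-independence (to pass from BPS cohomology in degree zero to ordinary cohomology of the smooth coprime moduli spaces, where P=W is known).

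First, I would produce parallel cohomological integrality decompositions on both sides. On the Dolbeault side, the machinery developed by Davison in \cite{Davisonnahtstacks} expresses $H^*_{\mathrm{BM}}(\mathfrak{M}_{Dol}(r,0),\BQ)$ as a free supersymmetric algebra built on the BPS cohomologies $H^*_{BPS}(\mdol(r',0),\BQ)$ for $r' \leq r$, twisted by $H^*(B\BC^*,\BQ)$ encoding the scaling automorphisms. An analogous PBW decomposition on the Betti side is available via the cohomological Hall algebra associated with the stack of representations, treated in \cite{DHSM2022}. One must verify that the Dolbeault decomposition is strict for $\mathfrak{P}_\bullet$ and the Betti one is strict for $W_\bullet$; this should be incorporated into a relative version of cohomological integrality over the Hitchin base on the Dolbeault side, and into the natural mixed Hodge structure on the CoHA on the Betti side.

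Next, I would identify the BPS building blocks across non-abelian Hodge. By Kinjo--Koseki (Theorem \ref{chiindepnoncoprime}), the perverse-filtered vector space $H^*_{BPS}(\mdol(r',0),\BQ)$ is isomorphic to $H^*_{BPS}(\mdol(r',1),\BQ)$; but since $\gcd(r',1) = 1$, the complex $\mathcal{BPS}^{Dol}_{r',1}$ agrees with $\mathcal{IC}^{Dol}_{r',1}$, which is the shifted constant sheaf on the smooth variety $\mdol(r',1)$, so the right-hand side is the ordinary perverse-filtered cohomology. A parallel $\chi$-independence statement on the Betti side identifies $H^*_{BPS}(\mb(r',0),\BQ)$ with the weight-filtered $H^*(\mb(r',1),\BQ)$. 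The coprime P=W theorem of Maulik--Shen \cite{MaulikShen2022} (or of \cite{HMMS2022,MSY}) then produces, block by block, an isomorphism exchanging $P_i$ with $W_{2i}$.

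Finally, I would assemble the block-wise isomorphisms into $\Upsilon$ by applying $\mathrm{Sym}$, and then verify that the resulting map matches $\mathfrak{P}_\bullet$ with $W_{2\bullet}$ on the whole of BM homology. The weight filtration is automatically multiplicative under $\mathrm{Sym}$, so the remaining work is to prove that $\mathfrak{P}_\bullet$ is multiplicative with respect to the cohomological integrality $\mathrm{Sym}$ structure. This is the stacky analogue of the multiplicativity statement in Proposition \ref{conj:equivpw}(ii), and it is where I expect the main obstacle to lie. I would approach it by establishing a refined, \emph{relative} cohomological integrality isomorphism over the Hitchin base $\mathcal{A}$, so that the BPS PBW isomorphism itself is realised at the level of perverse sheaves on $\mathcal{A}$; combining this with relative Hard Lefschetz for the stratified Hitchin fibration along the lines of \cite{MSY} should force the PBW isomorphism to be perverse-filtered. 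Even granting the coprime P=W and $\chi$-independence on both sides, upgrading the resulting numerical equality of filtered dimensions to a genuine filtered isomorphism at the stack level is the technical heart of the proof.
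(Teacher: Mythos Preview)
The statement you are trying to prove is a \emph{conjecture} in the paper, not a theorem; the paper does not claim a proof. What the paper does establish is (a) the existence of the isomorphism $\Upsilon$ (combining the Kinjo--Koseki and Davison--Hennecart--Schlegel-Mejia decompositions with the non-abelian Hodge isomorphism on intersection cohomology), and (b) the equivalence of PS=WS with PI=WI. The paper then explains that PS=WS would become equivalent to the already-proven coprime P=W \emph{if} one knew that the Betti $\chi$-independence isomorphism $\mathrm{BPS}^{B}_{\mathrm{Lie},r,n}\cong \mathrm{BPS}^{B}_{\mathrm{Lie},r,n'}$ of \cite{DHSM2022} preserves the weight filtration---and it explicitly flags this as \emph{not known}.

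Your strategy is exactly the reduction the paper outlines, and the genuine gap lies precisely where the paper says it does: your sentence ``A parallel $\chi$-independence statement on the Betti side identifies $H^*_{BPS}(\mb(r',0),\BQ)$ with the weight-filtered $H^*(\mb(r',1),\BQ)$'' assumes what is currently open. The isomorphism of vector spaces exists, but compatibility with $W_\bullet$ is only known indirectly at the level of $E$-polynomials (alternating sums), which does not suffice to identify the filtrations. Your remaining concern about multiplicativity of $\mathfrak{P}_\bullet$ under the PBW/Sym structure is a legitimate technical point, but it is secondary: without the Betti weight-preservation step you cannot match the building blocks, and no amount of relative cohomological integrality over $\mathcal{A}$ will supply that missing input on the Betti side. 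In short, your proposal correctly identifies the architecture of the expected argument, but it does not close the gap the paper itself leaves open.
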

		
		To see how the PS=WS is related with P=W and PI=WI, we need to introduce some notation.	    For any nonzero rational number $\mu$ , $\mu=\dfrac{b}{a}$ for $a,b \in \mathbb{Z}$,  $a>0$ and $\mathrm{gcd}(a,b)=1$, we define
	    $$\mdol^{\mu}:=\sqcup_{k\in \mathbb{Z}_{\geq 0}} \mdol(ka,kb); \quad \mb^{\mu}:=\sqcup_{k\in \mathbb{Z}_{\geq 0}} \mb(ka,kb);$$ 
		$$\mathfrak{M}_{Dol}^{\mu}=\sqcup_{k\in \mathbb{Z}_{\geq 0}} \mathfrak{M}_{Dol}(ka,kb); \quad \mathfrak{M}_{B}^{\mu}=\sqcup_{k\in \mathbb{Z}_{\geq 0}} \mathfrak{M}_{B}(ka,kb).$$
		Similarly, we set
		$$ \mathcal{IC}^{Dol}_{\mu}=\bigoplus_{k\in \mathbb{Z}_{\geq 0}}\mathcal{IC}^{Dol}_{ka,kb};	\quad  \mathcal{IC}^{B}_{\mu}= \bigoplus_{k\in \mathbb{Z}_{\geq 0}}\mathcal{IC}^{B}_{ka,kb}; \quad   \mathcal{BPS}^{Dol}_{\mu}=\bigoplus_{k\in \mathbb{Z}_{\geq 0}} \mathcal{BPS}^{Dol}_{ka,kb}.$$
		
		The following result of Kinjo and Koseki allows to decompose the Borel–Moore homology of $\mathfrak{M}_{Dol}^\mu$ into tensor products of the BPS cohomologies.
		\begin{thm}[\cite{KinjoKoseki2021}, Theorem 5.16]
			$$H^*_{\mathrm{BM}}(\mathfrak{M}_{Dol}^{\mu},\BQ))=H^*\left(\mathrm{Sym}(  H^*(\mathrm{B}\BC^*\otimes \mathcal{BPS}^{Dol}_{\mu}))\right)$$
			\end{thm}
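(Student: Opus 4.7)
My plan is to realize this identity as an instance of the cohomological integrality (PBW-type) theorem of Davison--Meinhardt applied to the 3-Calabi--Yau category of one-dimensional compactly supported coherent sheaves on the local surface $Y=\mathrm{Tot}(\mathcal{O}_\Sigma\oplus\Omega^1_\Sigma)$. Via the Beauville--Narasimhan--Ramanan correspondence already used in the paper, each component $\mathfrak{M}_{Dol}(ka,kb)$ is an open substack of the moduli stack of such sheaves on $Y$, so $\mathfrak{M}_{Dol}^\mu$ inherits the $(-1)$-shifted symplectic / d-critical structure native to a 3CY moduli stack, together with its natural direct-sum monoidal structure.

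First, I would pass to the good moduli space $p:\mathfrak{M}_{Dol}^\mu\to\mdol^\mu$ and write
\[ H^*_{\mathrm{BM}}(\mathfrak{M}_{Dol}^\mu,\BQ)\;=\;\BH^*\!\bigl(\mdol^\mu,\;p_*\BQ_{\mathfrak{M}_{Dol}^\mu}\bigr). \]
The coarse space $\mdol^\mu=\bigsqcup_k\mdol(ka,kb)$ is a commutative monoid under the direct-sum map $\oplus$, and the stackiness of $p$ produces a universal scalar-automorphism $\mathrm{B}\BC^*$-gerbe, accounting for the $\mathrm{B}\BC^*$ factor in the statement. The central input is the cohomological integrality theorem --- proved in this precise setting by Kinjo--Koseki by dimensional reduction from the vanishing cycle sheaves on the DT stack of $\mathcal{O}_Y$-modules --- which yields a canonical isomorphism
\[ p_*\BQ_{\mathfrak{M}_{Dol}^\mu}[\star]\;\simeq\;\mathrm{Sym}_{\oplus}\!\bigl(\mathrm{B}\BC^*\otimes\mathcal{BPS}^{Dol}_\mu\bigr) \]
in the category of perverse sheaves on $\mdol^\mu$, where $\mathrm{Sym}_\oplus$ denotes the free commutative algebra for the convolution induced by $\oplus$. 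Structurally this is a PBW decomposition of the relevant cohomological Hall algebra: the BPS sheaf collects the primitive generators and the $\mathrm{B}\BC^*$ factor encodes the loop of scalar automorphisms.

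Second, I would take hypercohomology of both sides. By the K\"unneth formula and the fact that global sections convert the convolution $\mathrm{Sym}_\oplus$ into the ordinary symmetric algebra on graded vector spaces, the right-hand side is exactly $H^*\!\bigl(\mathrm{Sym}(H^*(\mathrm{B}\BC^*\otimes\mathcal{BPS}^{Dol}_\mu))\bigr)$, matching the formula in the statement. The hardest step, I expect, is the sheaf-level integrality isomorphism in the non-coprime case: without stability the moduli stack is not smooth, so one must first exhibit it as a global critical locus using the 3CY structure on $Y$, perform the Davison--Kinjo--Koseki dimensional reduction to replace the vanishing cycle sheaf by a shift of the constant sheaf on the Higgs moduli, and verify multiplicativity along the $S$-equivalence / Harder--Narasimhan strata of $\oplus$. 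Once this PBW decomposition of $p_*\BQ_{\mathfrak{M}_{Dol}^\mu}$ is in place, the passage to Borel--Moore homology is a formal consequence.
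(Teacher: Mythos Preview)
The paper does not give its own proof of this statement: it is quoted verbatim as a result of Kinjo--Koseki \cite{KinjoKoseki2021} and used as a black box in the discussion relating PS=WS to PI=WI. So there is no argument in the paper to compare against.

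That said, your outline is an accurate summary of the strategy in the original source. The key steps you identify --- realizing $\mathfrak{M}_{Dol}^\mu$ as an open substack of the moduli of compactly supported one-dimensional sheaves on the local Calabi--Yau threefold $Y=\mathrm{Tot}(\mathcal{O}_\Sigma\oplus\Omega^1_\Sigma)$, invoking dimensional reduction to trade the vanishing-cycle sheaf for the (shifted) constant sheaf, and then applying the Davison--Meinhardt cohomological integrality theorem to obtain the sheaf-level PBW decomposition of $p_*\BQ_{\mathfrak{M}_{Dol}^\mu}$ --- are exactly the ingredients Kinjo--Koseki assemble. Your description of taking hypercohomology and converting $\mathrm{Sym}_\oplus$ to an honest symmetric algebra is also on the mark. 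One small caveat: you phrase the integrality isomorphism as holding in the category of perverse sheaves on $\mdol^\mu$, but strictly speaking both sides are unbounded complexes (the $\mathrm{B}\BC^*$ factor contributes $H^*(\mathrm{B}\BC^*)\cong\BQ[u]$), so the isomorphism lives in $D^b_c(\mdol^\mu)$ or in a suitable ind-completion; this does not affect the argument.
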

			
		 Later, in \cite[Theorem 1.5]{DHSM2022}, Davison, Hennecart and Schlegel-Mejia proved that the cohomologies of $\mathfrak{M}_{Dol}^\mu$ and $\mathfrak{M}_{B}^\mu$ can be respectively described as the symmetric algebra of free Lie algebras on intersection cohomologies of the corresponding moduli spaces.
		 \begin{thm}[\cite{DHSM2022}, Theorem 1.5]\label{thm:stackic}
		 	$$H^*_{\mathrm{BM}}(\mathfrak{M}_{Dol}^{\mu},\BQ))=H^*\left(\mathrm{Sym}(  H^*(\mathrm{B}\BC^*\otimes \mathrm{Free}_{\mathrm{Lie}}(\mathcal{IC}_{Dol}^{\mu})))\right)$$
		 	$$H^*_{\mathrm{BM}}(\mathfrak{M}_{B}^{\mu},\BQ))=H^*\left(\mathrm{Sym} ( H^*(\mathrm{B}\BC^*\otimes \mathrm{Free}_{\mathrm{Lie}}(\mathcal{IC}_{B}^{\mu})))\right)$$
		 \end{thm}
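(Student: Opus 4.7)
The plan is to deduce both identities from the cohomological Hall algebra (CoHA) structure carried by the Borel–Moore homology of the stacks, combined with a PBW-type decomposition theorem identifying the CoHA with the symmetric algebra of a canonically defined Lie subobject — the BPS Lie algebra. Concretely, I would first equip each of $H^*_{\mathrm{BM}}(\mathfrak{M}_{Dol}^\mu,\BQ)$ and $H^*_{\mathrm{BM}}(\mathfrak{M}_{B}^\mu,\BQ)$ with its CoHA multiplication coming from the stack of short exact sequences, and then push this structure along the good moduli space morphism $\mathfrak{M}^\mu\to\mathcal{M}^\mu$ in each setting. The $H^*(\mathrm{B}\BC^*)$ factor appearing on the right-hand side of both displayed formulas is forced by the scalar automorphisms present at every object parametrised by $\mathfrak{M}^\mu$.

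Next, I would invoke Davison's cohomological integrality theorem, whose Dolbeault incarnation underlies the theorem of Kinjo–Koseki quoted immediately before Theorem \ref{thm:stackic}: the derived pushforward from $\mathfrak{M}^\mu_{Dol}$ to $\mathcal{M}^\mu_{Dol}$ splits canonically as $\mathrm{Sym}\bigl(H^*(\mathrm{B}\BC^*)\otimes \mathcal{BPS}^{Dol}_\mu\bigr)$, and analogously on the Betti side. Taking hypercohomology, the Dolbeault half of the theorem reduces to identifying $\mathcal{BPS}^{Dol}_\mu$ with the free Lie algebra generated by $\mathcal{IC}^{Dol}_\mu$ inside the CoHA, and analogously on the Betti side with $\mathcal{BPS}^B_\mu$ versus $\mathcal{IC}^B_\mu$.

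The crucial step, which I expect to be the main obstacle, is this sheaf-level identification of the BPS Lie algebra with $\mathrm{Free}_{\mathrm{Lie}}(\mathcal{IC}^\bullet_\mu)$. The inclusion $\mathcal{IC}\hookrightarrow\mathcal{BPS}$ recalled in the previous subsection supplies a canonical family of generators; what one has to establish is that these generate the whole BPS Lie algebra and do so freely. I would argue by reduction to the coprime case: for coprime slopes, $\mathcal{IC}$ and $\mathcal{BPS}$ coincide, so the intersection cohomology already exhausts the primitive part; Davison's strict support decomposition of the CoHA then allows one to propagate the identification by induction on the denominators of the slopes of the simple subquotients appearing in a JH-filtration. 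Freeness (i.e.\ the absence of relations among the generators) is extracted from purity: both $H^*_{\mathrm{BM}}(\mathfrak{M}^\mu_{Dol})$ and $H^*_{\mathrm{BM}}(\mathfrak{M}^\mu_{B})$ carry pure mixed Hodge structures, so that a surjection from the symmetric algebra on $\mathrm{Free}_{\mathrm{Lie}}(\mathcal{IC})$ onto the full CoHA becomes an isomorphism as soon as Poincaré series match — and the latter comparison reduces to standard wall-crossing identities.

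Finally, on the Betti side a natural 3-Calabi–Yau presentation is absent, so the integrality package and its sheaf-theoretic refinement must be transported through the non-abelian Hodge correspondence for stacks developed in \cite{Davisonnahtstacks}. This provides an isomorphism between the relevant derived pushforwards which is compatible with both the CoHA multiplication and the BPS/IC decompositions, thereby upgrading the Dolbeault statement to the Betti one and yielding both displayed isomorphisms simultaneously.
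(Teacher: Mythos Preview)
The paper does not give its own proof of this statement: Theorem~\ref{thm:stackic} is quoted verbatim from \cite[Theorem~1.5]{DHSM2022} and used as a black box. So there is no ``paper's proof'' to compare against, and your task is really to sketch the DHSM argument.

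That said, two points in your sketch are genuine gaps. First, the purity claim is false on the Betti side: $H^*_{\mathrm{BM}}(\mathfrak{M}^\mu_B,\BQ)$ does \emph{not} carry a pure mixed Hodge structure --- the non-triviality of the weight filtration on character varieties is precisely what makes the P=W story interesting --- so your freeness-by-purity argument cannot go through as written for $\mathfrak{M}^\mu_B$. In \cite{DHSM2022} the Betti statement is obtained within a general framework for $2$-Calabi--Yau categories (preprojective algebras, semisimple local systems, Higgs sheaves, etc.), where the relevant purity input is a \emph{relative} one for the morphism to the good moduli space, not global purity of the Borel--Moore homology.

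Second, your final paragraph proposes to deduce the Betti identity by transporting the Dolbeault one through the non-abelian Hodge correspondence for stacks from \cite{Davisonnahtstacks}. This is circular in the present paper's logic: as explained immediately after Theorem~\ref{thm:stackic}, the stacky isomorphism \eqref{eq:isostacks} is \emph{constructed using} Theorem~\ref{thm:stackic} (via the identification of both sides with symmetric algebras on free Lie algebras generated by intersection complexes, which are then matched through the classical NAH isomorphism \eqref{eq:isoic}). You therefore cannot use the stacky NAH to prove the Betti half of the theorem; both halves must be established independently, and that is what \cite{DHSM2022} does.
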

		 Combining theorems, in the Dolbeault setting this yields the following isomorphism.
		 \begin{cor}\label{cor:bpsic}
		 $$\mathcal{BPS}^{Dol}_{\mu}=\mathrm{Free}_{\mathrm{Lie}}(\mathcal{IC}^{Dol}_{\mu})).$$
		 \end{cor}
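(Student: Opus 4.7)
The statement follows by directly comparing the two preceding theorems, both of which present $H^*_{\mathrm{BM}}(\mathfrak{M}_{Dol}^{\mu},\BQ)$ as the cohomology of a symmetric algebra, one built on BPS sheaves and the other on a free Lie algebra of intersection complexes. The plan is to make this a cancellation argument in three essentially formal steps.

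First, I would combine Kinjo--Koseki's theorem with Theorem \ref{thm:stackic} to obtain
$$H^*\bigl(\mathrm{Sym}(H^*(B\BC^*)\otimes \mathcal{BPS}^{Dol}_{\mu})\bigr) \;\simeq\; H^*\bigl(\mathrm{Sym}(H^*(B\BC^*)\otimes \mathrm{Free}_{\mathrm{Lie}}(\mathcal{IC}^{Dol}_{\mu}))\bigr).$$
The crucial point, which I would extract from the cited works, is that each of these identifications is not just an equality of graded vector spaces but lifts to an isomorphism of complexes of perverse sheaves on the Hitchin base $\mathcal{A}$ (after derived pushforward along the Hitchin map), and is compatible with the cocommutative Hopf algebra structure coming from the direct-sum map $\oplus\colon \mathfrak{M}_{Dol}^{\mu}\times\mathfrak{M}_{Dol}^{\mu}\to \mathfrak{M}_{Dol}^{\mu}$. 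In this language, both theorems are PBW-type statements for the relevant cohomological Hall algebra.

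Second, I would take primitive elements on each side. Since the primitives of $\mathrm{Sym}(V)$, for $V$ a graded object concentrated in primitives, are exactly $V$ itself, the Hopf isomorphism descends to
$$H^*(B\BC^*)\otimes \mathcal{BPS}^{Dol}_{\mu} \;\simeq\; H^*(B\BC^*)\otimes \mathrm{Free}_{\mathrm{Lie}}(\mathcal{IC}^{Dol}_{\mu}).$$
Third, I would remove the $H^*(B\BC^*)\cong \BQ[u]$ factor. Geometrically this factor arises from the $B\BC^*$-gerbe structure on $\mathfrak{M}_{Dol}^{\mu}$ over its rigidification, and $u$ acts freely on both sides, so one can either quotient by $u$ or restrict to $u$-coinvariants to obtain $\mathcal{BPS}^{Dol}_{\mu} \simeq \mathrm{Free}_{\mathrm{Lie}}(\mathcal{IC}^{Dol}_{\mu})$. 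Compatibility with the inclusion $\mathcal{IC}^{Dol}_{r,n}\hookrightarrow \mathcal{BPS}^{Dol}_{r,n}$ mentioned earlier in the text would then serve as a consistency check: it should correspond precisely to the inclusion of the Lie generators inside the free Lie algebra.

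The main obstacle is entirely in Step 1: upgrading the two symmetric-algebra presentations from isomorphisms of cohomology groups to compatible isomorphisms of Hopf algebras (and of $H^*(B\BC^*)$-modules) at the derived-categorical level, so that both the primitive-element extraction and the cancellation of $H^*(B\BC^*)$ are legitimate. This upgrade is effectively the PBW-type content of the cited works \cite{KinjoKoseki2021} and \cite{DHSM2022}, and once it is invoked, the remaining steps are purely algebraic.
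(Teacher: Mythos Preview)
Your proposal is correct and follows exactly the paper's approach: the paper simply says ``Combining theorems'' (referring to the Kinjo--Koseki and DHSM presentations of $H^*_{\mathrm{BM}}(\mathfrak{M}_{Dol}^{\mu},\BQ)$ as symmetric algebras) and states the corollary without further argument. You have supplied the details of the cancellation that the paper leaves implicit.
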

		 It is then natural to define 
		 $$ \mathcal{BPS}^{B}_{\mu}:=\mathrm{Free}_{\mathrm{Lie}}(\mathcal{IC}^{B}_{\mu})),$$
		 so that one can rewrite the isomorphisms in Theorem \ref{thm:stackic} as 
		 $$H^*_{\mathrm{BM}}(\mathfrak{M}_{Dol}^{\mu},\BQ))=H^*\left(\mathrm{Sym}_{\oplus} H^*(\mathrm{B}\BC^*\otimes \mathcal{BPS}^{Dol}_{\mu})\right);$$
		 $$H^*_{\mathrm{BM}}(\mathfrak{M}_{B}^{\mu},\BQ))=H^*\left(\mathrm{Sym}_{\oplus} H^*(\mathrm{B}\BC^*\otimes \mathcal{BPS}^{B}_{\mu})\right).$$
	     This result is crucial in constructing the isomorphism between the Borel--Moore homology of the stacks: in fact, the non abelian Hodge isomorphism between the corresponding moduli spaces, induces an isomorphism 
	     \begin{equation}\label{eq:isoic}
	     	\mathcal{IC}^{Dol}_{\mu}\cong \mathcal{IC}^{B}_{\mu}.
	     	\end{equation}
	     The above isomorphism yields an isomorphism between the corresponding Free Lie algebras 
	     \begin{equation}\label{eq:isobps}
	     	\mathcal{BPS}^{Dol}_{\mu}\cong \mathcal{BPS}^{B}_{\mu}.
	     	\end{equation}
	    This in turn yields
	     \begin{equation}\label{eq:isostacks}
	     	H^*_{\mathrm{BM}}(\mathfrak{M}_{Dol}^{\mu},\BQ))\cong H^*_{\mathrm{BM}}(\mathfrak{M}_{B}^{\mu},\BQ))
	     	\end{equation}
	     proving the first part of PS=WS.
	    
	    \begin{rmk}[PS=WS $\Leftrightarrow$ PI=WI]
	    	Theorem \ref{thm:stackic} implies that the intersection cohomologies of the moduli spaces $\mdol(r,0)$ and $\mb(r,0)$ appear as direct summands respectively of $H^*_{\mathrm{BM}}(\mathfrak{M}_B(r,0),\BQ)$ and $ H^*_{\mathrm{BM}}(\mathfrak{M}_{Dol}(r,0),\BQ)$.
	    	Moreover, the isomorphism \eqref{eq:isostacks} maps the intersection cohomology of $\mdol(r,0)$ in that of $\mb(r,0)$. In \cite[Theorem 6.1]{Davisonnahtstacks} Davison shows that the inclusions of the intersection cohomology of the moduli spaces in the Borel--Moore cohomology of the corresponding stacks respect both the perverse and the weight filtration. Hence PS=WS implies PI=WI.
	    	Using the generation result of Theorem \ref{thm:stackic}, Davison shows that also the converse is true, thus establishing the equivalence between PS=WS and PI=WI.	       \end{rmk}

		\begin{rmk}[Betti $\chi$-independence $\Rightarrow$ (PS=WS $ \Leftrightarrow $ P=W)]
			Let $\mathrm{BPS}^{Dol}_{\mathrm{Lie},r,n}$ (resp. $\mathrm{BPS}^{B}_{\mathrm{Lie},r,n}$) be the vector space given by the global sections of the component of bidgree $(r,n)$ in $\mathcal{BPS}^{Dol}_\mu$ (resp. $\mathcal{BPS}^{B}_\mu$). 
			Observe that, on the one hand, the non abelian Hodge isomorphism combined with \eqref{eq:isobps}, yields an isomorphism 
			$$\mathrm{BPS}^{Dol}_{\mathrm{Lie},r,n}\cong \mathrm{BPS}^{B}_{\mathrm{Lie},r,n}.$$
			On the other hand,  the $\chi$-independence Theorem \ref{chiindepnoncoprime} implies that there is an isomorphism 
			$$\mathrm{BPS}^{Dol}_{\mathrm{Lie},r,n}\cong \mathrm{BPS}^{Dol}_{\mathrm{Lie},r,n'}$$
			for all $r,n,n'$, such that the perverse filtration is preserved. 
			In \cite[Theorem 14.10]{DHSM2022} the authors show that there is an analogous isomorphism of vector spaces 
			\begin{equation}\label{eq:bpsbettirn}
				\mathrm{BPS}^{B}_{\mathrm{Lie},r,n}\cong \mathrm{BPS}^{B}_{\mathrm{Lie},r,n'},
			\end{equation}
		    on the Betti side. However it is not known whether this isomorphisms preserves the weight filtration. 
			If this were true, it would imply the equivalence of P=W, PI=WI and PS=WS. \\
			Despite conjectural, the fact that weight filtration are preserved by \eqref{eq:bpsbettirn}, has more than one evidence.
		    In the coprime case, it is implied by the proof of the P=W conjecture and the fact the corresponding isomorphism on the Dolbeault side preserves the perverse filtration. 
		    Moreover, it is shown in \cite{HauselRodriguez-Villegas2008}, that the E-polynomials (recording weights, but taking alternating sums over cohomological degrees) of both sides of \eqref{eq:bpsbettirn} coincide.
		    		\end{rmk}
\section{P=C phenomena and the proof of P=W by Maulik, Shen and Yin}\label{sec:pc}

\subsection{P=C phenomena and overview of the proof }	
		The proof of P=W by Maulik, Shen and Yin \cite{MSY} is obtained as a consequence of a more general result on a class of morphisms, which the authors call \textit{dualizable abelian fibrations}, that are modelled on abelian schemes and compactified Jacobian fibrations.
		This approach consists of two main points.
		\begin{itemize}
			\item The definition of a set of features characterizing dualizable abelian fibrations and the phrasing of a condition called \ita{Fourier Vanishing}, which ensures that this class of morphisms enjoys remarkable properties such as multiplicativity of perverse filtration.
			\item The proof that several classes of maps related to compactified Jacobian fibrations are dualizable abelian fibrations.
			\end{itemize}
		
		Loosely speaking, a dualizable abelian fibration is an abelian fibration $\pi:M\rightarrow B$ of relative dimension $g$ with a  dual fibration $\pi^{\vee}:M^{\vee}\rightarrow B^{\vee}$ satisfying two main properties:
		\begin{enumerate}[(i)]
			\item (Fourier-Mukai) $M$ and $M^{\vee}$ are related via a Fourier-Mukai (FM) transform with nice properties mimicking those of dual abelian schemes;
			\item (Full support) All the simple perverse summands in the decomposition theorem of $R\pi_*\mathbb{Q}_M$ are supported on the whole base $B$.
			\end{enumerate}
	    The Chern character of the FM kernel induces a Fourier transform in cohomology $$\mathfrak{F}=\sum \mathfrak{F}_k:H^*(M^{\vee},\mathbb{Q})\rightarrow H^*(M,\mathbb{Q}), \quad \text{such that } \mathfrak{F}_k(H^i(M^{\vee},\mathbb{Q})) \subseteq H^{\geq i+2k-2g}(M,\mathbb{Q}),$$
	    and a filtration $$C^{\mathfrak{F}}_kH^*(M,\mathbb{Q})=\mathrm{Span}\{Im\mathfrak{F}_j\mid j\leq k\}.$$ 
	    called the \textit{Chern filtration} associated with $\mathfrak{F}$.

        \begin{example} 
Any abelian scheme $\pi:M\rightarrow B$ is a dualizable abelian fibration. While Fourier transforms can be described in terms of the multiplication map on $M$, the full support condition is a consequence of the fact that $\pi$ is a smooth proper morphism and thus the decomposition theorem reduces to Deligne's theorem for smooth projective families, see Appendix \ref{sec:appendix}.
\end{example}

       The main result in Maulik-Shen-Yin's paper is the following theorem, see \cite[Theorem 0.1]{MSY}, establishing multiplicativity of the perverse filtration and the inclusion of the Chern filtration in the perverse one for all dualizable abelian fibration satisfying a technical condition on the Fourier-Mukai transforms called \ita{Fourier Vanishing} (FV).
       \begin{thm}\label{thm:pcfordab}
       	Let $\pi:M\rightarrow B$ be a dualizable abelian fibration satisfying $(\mathrm{FV})$. Then 
       	\begin{enumerate}[1.]
       		\item (Multiplicativity of $P$) The perverse filtration $P_\bullet$ on $H^*(M,\mathbb{Q})$ associated with $\pi$ is multiplicative;
       		\item (Perverse $\supseteq$ Chern) For any class $\alpha\in H^*(M^{\vee},\mathbb{Q})$ 
       		$$ \mathfrak{F}_k(\alpha)\in P_kH^*(M,\mathbb{Q}).$$
       		\end{enumerate}
       	\end{thm}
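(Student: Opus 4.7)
The plan is to lift the analysis to the category of relative Chow motives over $B$, where the Fourier--Mukai kernel $\mathcal{P}$ gives rise to Chow-theoretic Fourier transforms whose homological realizations recover $\mathfrak{F}$. This motivic setting has two crucial advantages: intersection products in Chow groups behave naturally under composition of correspondences, so that any filtration cut out by an orthogonal family of Chow-theoretic projectors is automatically multiplicative under cup product; and the convolution kernel $\mathcal{K}$ guaranteed by condition (a2) in the definition of a dualizable abelian fibration promotes cup product on $H^*(M^\vee)$ to an operation on correspondences compatible with the Fourier kernel.

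The concrete steps are as follows. First, using the Chern character of $\mathcal{P}$ together with the Todd class of the relative tangent bundle of $M \times_B M^\vee$, one defines graded Chow-theoretic Fourier transforms whose homological realization is $\mathfrak{F}_k$. Second, the Fourier Vanishing hypothesis $(\mathrm{FV})$ is exactly the input needed to ensure that the composition of the forward transform with its inverse decomposes as a sum of orthogonal projectors $\mathfrak{p}_k \in \Corr^0_B(M,M)$ indexed by perversity. Third, one identifies the homological realization of $\mathfrak{p}_k$ with the projector onto the $k$-th graded piece of the perverse filtration on $R\pi_* \mathbb{Q}_M[\dim M]$. Here the full support hypothesis is essential: it forces the decomposition theorem for $\pi$ to involve only $\mathrm{IC}$ complexes of local systems supported on the whole of $B$, so that a motivic splitting whose generic behaviour matches the Beauville decomposition for the abelian generic fibers is forced, by perverse continuation, to match the perverse splitting everywhere. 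Once this identification is in hand, the multiplicativity statement follows formally, because $\mathfrak{p}_k$ is constructed from correspondences that are compatible with the convolution kernel $\mathcal{K}$, and the inclusion $\mathfrak{F}_k(\alpha) \in P_k H^*(M,\mathbb{Q})$ is then immediate, since $\mathfrak{F}_k$ factors by construction through the image of $\sum_{j \leq k} \mathfrak{p}_j$.

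The principal obstacle is the third step, namely identifying the Chow-theoretic projectors with the perverse projectors. On a dense open subset of $B$ where $\pi$ is smooth this is a direct consequence of the Beauville decomposition for abelian schemes transported via the Fourier--Mukai formalism, but propagating such a decomposition across the entire base requires the combined strength of the full-support condition and $(\mathrm{FV})$ to control the contributions of singular fibers. Isolating precisely which vanishing is needed and verifying that the motivic identities force the homological projectors to be perverse rather than merely Leray is the technical heart of the argument, and is exactly the reason the whole construction must be carried out at the level of relative Chow motives from the outset rather than directly at the level of perverse sheaves.
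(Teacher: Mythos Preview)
Your outline is essentially the approach taken in the paper (which presents it in detail for the compactified Jacobian case, Theorem~\ref{thm:theorem0.2}, and refers to \cite{MSY} for the general statement): one works in relative Chow motives, builds the projectors $\mathfrak{p}_k=\sum_{i\leq k}\mathfrak{F}_i\circ\mathfrak{F}^{-1}_{2g-i}$ from the Fourier kernel, uses $(\mathrm{FV})$ to verify the projector identities, invokes the full-support hypothesis to identify their homological realization with the perverse filtration by reduction to the smooth locus, and deduces multiplicativity from the convolution identity $\mathfrak{F}^{-1}(\alpha\cup\beta)=\mathfrak{F}^{-1}(\alpha)*\mathfrak{F}^{-1}(\beta)$ together with the codimension-$g$ support constraint on $\mathcal{K}$.

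One clarification: your sentence asserting that ``any filtration cut out by an orthogonal family of Chow-theoretic projectors is automatically multiplicative under cup product'' is not correct as stated---having motivic projectors alone does not buy multiplicativity; what is actually used (and what you correctly invoke later) is the interplay between the Fourier transform and the convolution kernel, together with the degree shift forced by the codimension of $\mathrm{Supp}(\mathcal{K})$. Also note that the $\mathfrak{p}_k$ in the paper are \emph{nested} projectors cutting out the filtered pieces $P_k$, not mutually orthogonal projectors onto the graded pieces; this matches your final claim that $\mathfrak{F}_k$ factors through $\sum_{j\leq k}\mathfrak{p}_j$, but the phrasing ``orthogonal family'' and ``projector onto the $k$-th graded piece'' should be adjusted.
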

       
       \begin{rmk}
       	Given a dualizable abelian fibration $\pi:M\rightarrow B$ and the associated cohomological operators $\mathfrak{F}_i$, the $(\mathrm{FV})$ condition prescribes that 
       	$$ \mathfrak{F}_i^{-1}\circ \mathfrak{F}_j=0 , \text{ for } i+j<2g.$$
       	This condition, which is in general hard to verify, ensures that the operators $\mathfrak{F}_i$ provides \textit{projectors} on the cohomology of $M$, which recover the perverse filtration on $H^*(M,\mathbb{Q})$. Moreover, the presence of projectors together with $(\mathrm{FV})$ reduces the statement (Perverse $\supseteq$ Chern) to a simple calculation, cfr. proof of Theorem \ref{thm:theorem0.2}.
       	\end{rmk}

	   Maulik, Shen and Yin prove that, given a family $C\rightarrow B$ of integral projective locally planar curves which admits a section and defined $\pi_d:\cjac^d\rightarrow B$ to be the associated compactified Jacobian fibration, if the total space of $\cjac^d$ is nonsingular then $\pi_d$ satisfies conditions (i) and (ii) of dualizable abelian fibrations (see \cite[Theorem 0.2]{MSY}), together with Fourier Vanishing. 
	    Indeed the theory of Fourier-Mukai transforms for compactified Jacobians has been developed by Arinkin \cite{Arinkin2011,Arinkin2013}, while the full support in the decomposition theorem of $R\pi_{d*}\mathbb{Q}_{\cjac^d}$ is ensured by a combination of Ng\^{o} support theorem \cite{ngo} and Severi identitities \cite[Lemma 4.1]{MaulikShen2023}.
       Later, they generalize the result to families without a section.
       
       \begin{cor}[\cite{MSY}, Corollary 4.5]\label{cor:4.5intro} Let $C\rightarrow B$ be a family of projective integral locally planar curves and let $\pi_d: \cjac^d\rightarrow B$ the relative compactified Jacobian. If the total space of $\cjac^d$ is nonsingular, the statements in Theorem \ref{thm:pcfordab} hold for $\pi_d: \cjac^d\rightarrow B$.
       \end{cor}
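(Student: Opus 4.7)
The plan is to deduce Corollary \ref{cor:4.5intro} from the ``with section'' statement of \cite[Theorem 0.2]{MSY} by an étale (or more generally smooth) base change argument. The crucial point is that every structure appearing in the hypotheses and conclusions of Theorem \ref{thm:pcfordab}---the formation of the compactified Jacobian $\cjac^d$, the Poincaré and convolution kernels, the Fourier transform $\mathfrak{F}$, the perverse filtration $P_\bullet$, and the Chern filtration $C^{\mathfrak{F}}_\bullet$---is compatible with smooth base change on $B$.

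First I would construct a smooth surjective morphism $\rho: B'\to B$ such that the pulled-back family $C' := C\times_B B'\to B'$ acquires a section. This is possible because the fibers of $C\to B$ are integral and locally planar, so the relative smooth locus $C^{\mathrm{sm}}\subseteq C$ is open, dense in every fiber, and smooth and surjective over $B$; a typical choice is $B' = C^{\mathrm{sm}}$, which comes equipped with the tautological section given by the composite $B' \hookrightarrow C \times_B B'$ of the diagonal. By smooth base change the pulled-back compactified Jacobian $\pi'_d:\cjac^d_{C'/B'}\to B'$ is canonically identified with $\cjac^d\times_B B'$ and has nonsingular total space, while $C'\to B'$ remains a family of integral projective locally planar curves. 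The hypotheses of \cite[Theorem 0.2]{MSY} are thus satisfied for $\pi'_d$, so $\pi'_d$ is a dualizable abelian fibration satisfying (FV), and Theorem \ref{thm:pcfordab} gives both multiplicativity of $P_\bullet$ and the inclusion $C^{\mathfrak{F}}_\bullet \subseteq P_\bullet$ on $H^*(\cjac^d_{C'/B'},\BQ)$.

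It remains to descend these two statements along $\rho$. Smoothness and surjectivity of $\rho$ imply that the induced pullback $\rho^*\colon H^*(\cjac^d,\BQ)\to H^*(\cjac^d_{C'/B'},\BQ)$ is injective, compatible with cup product, and sends the perverse filtration and the Chern filtration on the base to their counterparts on the cover (up to the usual shift), since $R\pi'_{d,*}\BQ$ is the smooth pullback of $R\pi_{d,*}\BQ$ and the Fourier kernel pulls back tautologically. Multiplicativity of $P_\bullet$ and the inclusion $\mathfrak{F}_k(\alpha)\in P_kH^*(\cjac^d,\BQ)$ then follow from the corresponding statements over $B'$ by testing them after applying the injective ring map $\rho^*$. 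The main obstacle I anticipate is the descent of the Fourier--Mukai data itself: the Poincaré and convolution kernels appearing in Arinkin's construction are genuinely defined at the level of the compactified Jacobian and need not descend as individual sheaves from $C'\to B'$ to $C\to B$, so one must argue at the level of induced cohomological operators---verifying that the Chern filtration on $H^*(\cjac^d,\BQ)$ defined via any such locally-constructed kernel agrees, after $\rho^*$, with the Chern filtration downstairs. Once this compatibility is set up, the two conclusions of Theorem \ref{thm:pcfordab} transfer formally.
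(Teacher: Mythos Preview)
Your strategy diverges from that of \cite{MSY} and the paper, and it carries a genuine gap. The paper stresses that passing from Theorem~\ref{thm:theorem0.2} (families with a section) to Corollary~\ref{cor:4.5intro} is ``far from immediate'': the twin obstacles---possible non-existence and non-uniqueness of a universal sheaf on $C\times_B\cjac^d$---are handled not by base change but by \emph{promoting schemes to gerbes}. One works on the $\mu_r$-gerbes $\cjacg^d$ over $\cjac^d$, where a twisted universal sheaf and hence a Poincar\'e sheaf on $\cjacg^1\times_B\cjacg^d$ exist canonically; the Fourier operators $\mathfrak{F}_k$ and the Chern filtration are then defined directly on the cohomology of $\cjac^d$ (which agrees over $\BQ$ with that of its gerbe), and the (FV) argument goes through as before.

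Your approach instead constructs $\mathfrak{F}$ only on the cover $\cjac^d\times_B B'$, and you rightly flag descent of the Fourier data as the ``main obstacle''---but this is precisely the substance of the corollary, not a technicality to be dispatched at the end. Without a Poincar\'e sheaf (or a gerby substitute) on $\cjac^d$ itself, the Chern filtration downstairs is undefined and conclusion~(2) of Theorem~\ref{thm:pcfordab} has no meaning there. There is also a more elementary gap: you assert that smoothness and surjectivity of $\rho$ force $\rho^*$ to be injective on rational cohomology, but this fails for non-proper $\rho$---witness $\rho\colon\BA^2\setminus\{0\}\to\BP^1$, which kills $H^2$. Since the fibres of your $\rho\colon C^{\mathrm{sm}}\to B$ are open affine curves, neither injectivity of $\rho^*$ nor the strict compatibility with $P_\bullet$ that you need is available. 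Replacing $C^{\mathrm{sm}}$ by a finite flat multisection $D\subset C$ would restore injectivity via a transfer, but then $\cjac^d\times_B D$ is typically singular and Theorem~\ref{thm:theorem0.2} no longer applies upstairs.
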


       Consider now the Hitchin map $h:\mdol\rightarrow \mathcal{A}$ and restrict it to the elliptic locus $B\subset\mathcal{A}$ parametrising integral spectral curves. 
        On $B$, the Hitchin fibration is nothing but the relative compactified Jacobian fibration of the spectral family $C\rightarrow B$:
	    $$\pi_d:\cjac^d\rightarrow  B,$$
        where the degree $d$ of the compactified Jacobian is determined by $r$ and $n$ via Riemann--Roch formula.
    
	   In this setting, if we restrict the tautological generators $c_k(\gamma)$ introduced in section \ref{sec:theoriginalconjecture} to $\cjac^d$, the filtration defined via the Chern character of a universal family on $\mdol(r,n)$
       \begin{equation}\label{eq:chernintro}
       C_kH^*(\cjac^d,\mathbb{Q})=\mathrm{Span}\left\lbrace c_j(\gamma)\mid j\leq k \right\rbrace
       \end{equation}
       is determined by the Chern filtration associated with the Fourier--Mukai transforms, see section \ref{sec:curvesonsurfaces}.
 
  As an application of Corollary \ref{cor:4.5intro}, Maulik, Shen and Yin  provide a new proof of P=W. In fact the P=W conjecture can be decomposed into three identities:	$$ P_k(H^*(\mdol,\mathbb{Q}))=C_k(H^*(\mdol,\mathbb{Q}))=C_k(H^*(\mb,\mathbb{Q}))=W_{2k}(H^*(\mb,\mathbb{Q})).$$ 
  The second identity has been established earlier by works of Markman \cite{Markman2002} and  Hausel–Thaddeus \cite{HauselThaddeus04}, whereas the third by Shende \cite{Shende17}.
  Hence, one is left with proving the first identity  P=C on $\mdol$. The Curious Hard Lefschetz theorem allows to reduce the proof of the equality to just showing that 
$$ C_kH^*(\mdol,\mathbb{Q})\subseteq P_kH^*(\mdol,\mathbb{Q}).$$      	
 Now, Corollary \ref{cor:4.5intro} ensures that C $\subseteq$ P  on the elliptic locus of the Hitchin map. 
The result can now be extended to the whole Hitchin base $\mathcal{A}$ via the same argument as in \cite[Section 8]{HMMS2022}.

\begin{rmk} The above discussion suggests that P=C phenomena might make sense in a broader context. For example, there is no Betti version of $D$-twisted Higgs moduli spaces, so we cannot ask about P = W, but we can ask whether P = C holds.
Moreover, the streamline of the proof suggests that P=C is rather a property of compactified Jacobian fibrations associated with family of curves and does not depend on the representation theory perspective of the other proofs of P=W.
\end{rmk}

\begin{rmk} In the same way as the P=W conjecture, it is possible to establish a P$\supseteq$ C phenomenon analogous to Theorem \ref{thm:pcfordab} for the moduli space $M_{r,\chi}$ of 1-dimensional stable sheaf on $\mathbb{P}^2$ having support on [$rH$] and Euler Characteristic $\chi$ \cite[Theorem 0.6]{MSY}. Here $H$ denotes the hyperplane class.
It is conjectured that also the other inclusion is true, giving a $P=C$ phenomenon. The conjecture has been recently verified in some cases in \cite{KLMP} and it is expected to hold for more general Del Pezzo surfaces, see also \cite{KPS}.
\end{rmk}

In what follows, for expository purposes, we first show how Fourier--Mukai transforms control the perverse filtration in the case of abelian schemes, then we present a sketch of the proof of Theorem \ref{thm:pcfordab} in the case of the relative compactified Jacobian associated with families $C\rightarrow B$ with a section. Later, by promoting schemes to gerbes, one extends Theorem \ref{thm:theorem0.2} to (twisted) compactified Jacobians associated to families without a sections. Finally, we show how to deduce the P=W conjecture from it. 

\subsection{Perverse=Chern for abelian schemes}\label{sec:pcabelian}
Suppose $\pi:A\rightarrow B$ is a smooth abelian scheme of relative dimension $g$.
Since $A$ has a group structure, one can consider multiplication by $[N]:A\rightarrow A$ inducing
$$ [N]^*:H^*(A,\mathbb{Q})\rightarrow H^*(A,\mathbb{Q})$$
in cohomology. The eigenspace decomposition of $[N]^*$ is called \textit{Beauville decomposition} and reads as 
\begin{equation}\label{eq:Beauvilledec}
H^*(A,\mathbb{Q})=\bigoplus_{i=0}^{2g} H^*(A,\mathbb{Q})_{(i)},
\end{equation}
where $H^*(A,\mathbb{Q})_{(i)}=\left\lbrace \alpha \in H^*(A,\mathbb{Q})\mid [N]^*\alpha=N^i\alpha\right\rbrace.$
Let us point out two key properties of the Beauville decomposition.
\begin{itemize}
\item (Splitting of Leray filtration) The Beauville decomposition splits the Leray filtration associated with $\pi$. Note that, since $\pi$ is a smooth projective morphism, the Leray and the Perverse Leray filtration coincide. 
\item (Strong multiplicativity) The Beauville decomposition is multiplicative with respect to cup product, i.e.
\begin{equation}\label{eq:strongmult} H^d(A,\mathbb{Q})_{(i)}\times H^{d'}(A,\mathbb{Q})_{(j)} \rightarrow H^{d+d'}(A,\mathbb{Q})_{(i+j)}.\end{equation}
\end{itemize}
Beauville \cite{Beauville86} shows that there is a way to retrace the decomposition in \eqref{eq:Beauvilledec} via Fourier transforms as follows. 
There is a Poincaré line bundle $\mathcal{P}\rightarrow A\times_B A^{\vee}$, which we think of as an element of $D^bCoh(A\times_B A^{\vee})$. 
Its Chern character $\mathrm{ch}(\mathcal{P})\in H^*(A\times_B A^{\vee},\mathbb{Q})$ defines a Fourier--Mukai transform
\begin{equation}\label{eq:poincaré}
\mathfrak{F}:H^*(A^{\vee},\mathbb{Q})\rightarrow H^*(A,\mathbb{Q}), \quad \alpha \mapsto p_{2*}(p_{1}^*\alpha \cup \mathrm{ch}(\mathcal{P})),
\end{equation}
which admits an inverse $\mathfrak{F}^{-1}:H^*(A,\mathbb{Q})\rightarrow H^*(A^{\vee},\mathbb{Q})$ (i.e. $\mathfrak{F}^{-1}\circ \mathfrak{F}=\mathcal{O}_{\Delta/B}$).

Beauville shows the the decomposition \eqref{eq:Beauvilledec} enjoys the following properties. 

\begin{enumerate}[1)]
\item (Fourier stability) Calling $\mathfrak{F}_i$ the component of $\mathfrak{F}$ given by cupping with $\mathrm{ch}_i(\mathcal{P})$ in \eqref{eq:poincaré}, we have mappings
\begin{center}
\begin{tikzcd}[swap,bend angle=45]
H^d(A,\mathbb{Q})_{(i)}\ar[bend left, "\mathfrak{F}^{-1}_{2g-i}",']{r}& H^{d+2g-2i}(A^{\vee},\mathbb{Q})_{(2g-i)}. \ar[bend left, "\mathfrak{F}_{i}"]{l}
\end{tikzcd}
\end{center}
In particular, when applying the Fourier transform on each summand of the Beauville decomposition, there is just one degree of the Chern character wich contributes. 
Also, note that the Chern grading of $\mathfrak{F}$ affects also the cohomological degree of the image.
\item (Strong multiplicativity) Given the addition map $\mu:A^{\vee}\times_B A^{\vee}\rightarrow A^{\vee}$ on $A^{\vee}$, we can define the \textit{Pontryagin convolution product}
$$*: H^*(A^{\vee},\mathbb{Q})\times H^*(A^{\vee},\mathbb{Q})\rightarrow H^*(A^{\vee},\mathbb{Q}), \quad \alpha* \beta:= \mu_*(p_1^*\alpha\cup p_2^*\alpha)$$ 
whose convolution kernel is supported on the graph of $\mu$ (and thus it is supported in codimension $g$ in $A^{\vee}\times_B A^{\vee}\times_BA^{\vee}$).
Thanks to Fourier stability and the convolution product, we can get a new proof of the strong multiplicativity \eqref{eq:strongmult}. In fact, given $\alpha\in H^d(A,\mathbb{Q})_{(i)}$ and  $\beta\in H^{d'}(A,\mathbb{Q})_{(j)}$, their cup product $\alpha\cup \beta\in H^{d+d'}(A,\mathbb{Q})_{?}$ for a given index \textquotedblleft ?\textquotedblright\ which we want to determine. If we apply the $\mathfrak{F}^{-1}$, we can easily verify that  
$$\mathfrak{F}^{-1}(\alpha\cup \beta)=\mathfrak{F}^{-1}(\alpha)*\mathfrak{F}^{-1}(\beta).$$
On the one hand, by Fourier stability, one knows that the first member must lie in $H^{d+d'+2g-2?}(A^{\vee},\mathbb{Q})_{(2g-?)}$. On the other hand, the properties of the convolution product and the codimension of the support of the convolution kernel imply that the second member of the equality lies in $H^{d+d'+2g-2(i+j)}$, hence that $?=i+j$.

\item (Perverse = Chern) One has that 
\begin{equation}\label{eq:P=Cabelian}
\mathfrak{F}_i(H^*(A^{\vee},\mathbb{Q}))=H^i(A,\mathbb{Q})_{(i)}
\end{equation}
\end{enumerate}

\begin{rmk}
Among the three properties above, the only one which is proved using the existence of multiplication by $[N]$ is  Fourier stability. In particular, one can show that rescaling by different powers of $[N]^*$ yields the following vanishing condition:
\begin{equation}\label{eq:FVabelian}
\mathfrak{F}^{-1}_i\circ \mathfrak{F}_j=0 \ \forall \  i+j\neq 2g.
\end{equation}
Once \eqref{eq:FVabelian} is established, the strong multiplicativity and Perverse = Chern follow as a formal direct consequences. 
\end{rmk}

The upshot that one should get from the above discussion is that the Beauville decomposition, which is a splitting of the (perverse) Leray filtration, is governed by the Chern grading of the Fourier transforms associated with the Poincaré line bundle on $A^{\vee}\times_B A^{\vee}$. 
It is then natural to wonder how to extend the abelian scheme picture to settings where singular fibers may appear, such as the cases of the relative compactified Jacobians, Mukai and Hitchin systems.
The main difficulty in extending Beauville work is that it is not possible to extend the multiplication by $[N]$ to singular fibers. However, thanks to the work of Arinkin \cite{Arinkin2011,Arinkin2013}, it is possible to extend Fourier transforms in the case of dualizable abelian fibrations. 

In what follows, we present an extension of Beauville results to the case of the compactified Jacobian fibration as it is one of the key examples and it is a crucial step in the proof of the P=W conjecture. Indeed Maulik, Shen and Yin prove a theorem which generalizes the above notions for all dualizable abelian fibrations, see \cite[Theorem 2.4]{MSY} (see also \cite{MSY24} for a refinement of this result). Since their strategy involves passing from coherent sheaves to cycle classes, the authors use the language of \ita{relative Chow Motives} developed by Corti and Hanamura \cite{CortiHanamura}. In fact, this language has the advantage of admitting various realizations: Chow groups, mixed Hodge modules and global cohomology endowed with a mixed Hodge structure.
For expository purposes, we present the results in terms of their cohomological realization and we refer to \cite[\S 2]{MSY} for further details.

\subsection{Compactified Jacobians with a section}\label{sec:jacwithsec}

We work in the following setup. Let $C\rightarrow B$ be a family of projective integral locally planar curves such that there is a section $B\rightarrow C$. 
Let $\pi:\cjac\rightarrow C$ be the corresponding relative compactified Jacobian fibration and suppose also to choose $B$ so that the total space $\cjac$ is nonsingular.
This latter request is not a restrictive hypothesis: fix a point $b_0\in B$ such that the corresponding curve $C_{b_0}$ is singular and let $(\mathbb{V},0)$ be the product of the versal deformations of the singularities of $C_{b_0}$. To ensure that the total space of $\cjac$ is smooth around  $C_{b_0}$, it is sufficient that the image $\mathcal{T}$ of $T_{b_0}B$ in $T_0\mathbb{V}$ is transverse to the image of $T_x \overline{J}_{C_{b_0}}$ for all $x\in \overline{J}_{C_{b_0}}$. This is achieved for instance if $\dim \mathcal{T}$ is greater than the cogenus of $C_{b_0}$ (see \cite{FGvS} for more details).

On the regular locus of $B$, the map $\pi$ restrict to an abelian scheme satisfying Beauville decomposition. 
The following result by Arinkin provides a Cohen-Macaulay extension of the Poincaré bundle on the regular locus to the boundary of the family.
\begin{prop}[Arinkin] Let $\pi:\cjac\rightarrow C$ be a relative compactified Jacobian as above. Then there exists an element $\mathcal{P}\in D^bCoh(\cjac\times_B\cjac)$, extending the Poincaré bundle on the regular locus \footnote{More precisely, $\mathcal{P}$ is the direct image by the open embedding $j: J_C\times_B \cjac \cup  \cjac\times_B J_C\hookrightarrow \cjac\times_B \cjac$ of the Poincaré bundle on $J_C\times_B \cjac$.}, which defines a Fourier--Mukai transform 
$$\mathfrak{F}: H^*(\cjac,\mathbb{Q})\rightarrow H^*(\cjac,\mathbb{Q})$$
admitting an inverse $\mathfrak{F}^{-1}$.
\end{prop}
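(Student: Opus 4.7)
The strategy follows Arinkin's construction \cite{Arinkin2011, Arinkin2013}, reducing the claim to a careful analysis of the singularities of $\cjac \times_B \cjac$. Let $U := (J_C \times_B \cjac) \cup (\cjac \times_B J_C) \subset \cjac \times_B \cjac$ be the open locus where at least one factor lies in the fine relative Jacobian $J_C$, and let $\tilde{\mathcal{P}}$ denote the classical Poincar\'e line bundle on $U$, rigidified via the section $B \to C$. First I would show that the complement $Z := (\cjac \times_B \cjac) \setminus U$ has codimension at least two: by the local planarity hypothesis, for each fiber $C_b$ the singular locus of $\overline{J}_{C_b}$ has pure codimension one inside $\overline{J}_{C_b}$, and $Z$ is obtained as the fiber product over $B$ of these two singular loci, so $\mathrm{codim}_{\cjac \times_B \cjac}(Z) \geq 2$.

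Next I would define $\mathcal{P} := j_* \tilde{\mathcal{P}}$ and show that it is a \emph{maximal Cohen--Macaulay} coherent extension of $\tilde{\mathcal{P}}$. This is the technical heart of \cite{Arinkin2013}: working locally around a point $z \in Z$, the completion of $\cjac \times_B \cjac$ at $z$ decomposes, up to smooth factors, as a product of two planar curve singularities, and on this local model one writes down an explicit two-term locally free resolution of $\mathcal{P}$. From that resolution one deduces both the Cohen--Macaulay property and the vanishing $R^i j_* \tilde{\mathcal{P}} = 0$ for $i > 0$; in particular, $\mathcal{P}$ is a genuine object of $D^b\Coh(\cjac \times_B \cjac)$ and is flat over each projection $p_i : \cjac \times_B \cjac \to \cjac$.

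Finally I would define the cohomological transform via Grothendieck--Riemann--Roch,
\[
\mathfrak{F}(\alpha) := p_{2*}\bigl( p_1^* \alpha \cup \ch(\mathcal{P}) \cup \td(T_{p_2}) \bigr),
\]
and construct $\mathfrak{F}^{-1}$ analogously from a dual kernel $\mathcal{P}^\vee$ obtained by applying the same pushforward recipe to the inverse Poincar\'e bundle on $U$. On the open locus $U$, the identity $\mathcal{P}^\vee \circ \mathcal{P} \simeq \mathcal{O}_\Delta$ reduces to the classical inversion theorem for abelian schemes \cite{Beauville86}; since $Z$ has codimension at least two and all intermediate sheaves are Cohen--Macaulay of matching rank, this isomorphism extends across $Z$ inside $D^b\Coh(\cjac \times_B \cjac)$, which realizes the inverse in cohomology. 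The main obstacle is the Cohen--Macaulay extension in the second step: one must verify that the convolution $\mathcal{P}^\vee \circ \mathcal{P}$ does not acquire spurious higher Tors along the singular locus $Z$, and this is precisely where the local planarity hypothesis becomes essential.
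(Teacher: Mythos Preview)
The paper does not supply its own proof of this proposition: it is stated as a result of Arinkin, with references to \cite{Arinkin2011,Arinkin2013}, and the exposition simply proceeds from it. There is therefore nothing in the paper to compare your sketch against directly.

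That said, as a summary of Arinkin's construction your outline is broadly on the right track --- the sheaf $\mathcal{P}$ is indeed obtained as $j_*$ of the Poincar\'e line bundle from the open set $U$, and establishing the Cohen--Macaulay property is the technical core. Two cautions. First, the claimed ``explicit two-term locally free resolution'' in your second step is not what Arinkin actually produces; his proof of the Cohen--Macaulay property proceeds instead via flatness over one factor and a fiberwise dimension argument, together with a description of $\mathcal{P}$ through determinant line bundles. Second, your final step is too optimistic: extending the isomorphism $\mathcal{P}^{-1} \circ \mathcal{P} \simeq \mathcal{O}_{\Delta}$ across $Z$ does not follow from ``codimension $\geq 2$ plus Cohen--Macaulay'' alone, because the convolution is a derived pushforward of a derived tensor product and one must separately control the higher Tors and higher direct images along the singular locus. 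Arinkin's actual proof of the autoequivalence in \cite{Arinkin2013} is considerably more delicate than this, and the extension-by-codimension heuristic you invoke would, if it worked as stated, bypass most of the content of that paper.
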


It is not difficult to verify that, unlike in the abelian scheme case, the vanishing 
\begin{equation}
\mathfrak{F}^{-1}_i\circ \mathfrak{F}_j=0 \quad \forall \ i+j\neq 2g
\end{equation}
no longer holds and thus that the image of a cohomology class $\alpha\in H^*(\cjac,\mathbb{Q})$ under $\mathfrak{F}$ splits into different degrees of the perverse filtration of $\pi$.

Thanks to a dimension estimate on the support of $\mathcal{P}$, Maulik, Shen and Yin manage to prove the vanishing for $i+j<2g$. The idea of the proof consist in replacing multiplication by $[N]$ by a more complicated strucure involving again $\delta$-inequality and Adams operations, see \cite[\S 3.5]{MSY} for a detailed account. 
\begin{prop}\label{prop:halfvanishing} Keep the notation as above. The Fourier--Mukai transform $\mathfrak{F}$ satisfies
$$\mathfrak{F}^{-1}_i\circ \mathfrak{F}_j=0 \quad \forall \ i+j<2g.$$
\end{prop}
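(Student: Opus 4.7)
The strategy is to adapt Beauville's argument of Section~\ref{sec:pcabelian} for the analogous, stronger vanishing \eqref{eq:FVabelian} in the abelian scheme case. There, eigenspace separation was produced by $[N]^{*}$, whose eigenvalue $N^{k}$ on the Beauville summand $H^{*}(A,\BQ)_{(k)}$, combined with $\mathfrak{F}^{-1}\circ\mathfrak{F}=\mathrm{id}$, forced the off-diagonal vanishing for all $i+j\neq 2g$. Since $[N]$ does not extend to $\cjac$, the plan is to replace its effect by Adams operations $\psi^{N}$ on $K^{0}(\cjac\times_{B}\cjac)$, which act on Chern components by $\ch_{k}(\psi^{N}\mathcal{P})=N^{k}\ch_{k}(\mathcal{P})$, and to import a codimension bound from Ng\^{o}'s $\delta$-inequality applied to the Cohen--Macaulay support of Arinkin's extended Poincar\'e sheaf.

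First I would lift the whole picture to relative Chow motives over $B$, so that $\mathfrak{F}_{j}$ and $\mathfrak{F}^{-1}_{i}$ are realised by cycle classes of codimension $j$ and $i$ on $\cjac\times_{B}\cjac$ extracted from $\ch(\mathcal{P})\cdot\td$ and $\ch(\mathcal{P}^{-1})\cdot\td$ respectively; the composition $\mathfrak{F}^{-1}_{i}\circ\mathfrak{F}_{j}$ then lies in $\Chow^{\,i+j-g}(\cjac\times_{B}\cjac)_{\BQ}$. Arinkin's inverse relation $\mathcal{P}^{-1}\circ\mathcal{P}\simeq\mathcal{O}_{\Delta_{\cjac/B}}$ together with Grothendieck--Riemann--Roch give $\sum_{i,j}\mathfrak{F}^{-1}_{i}\circ\mathfrak{F}_{j}=\ch(\mathcal{O}_{\Delta_{\cjac/B}})\cdot\td(T_{\cjac/B})$, which is supported on the codimension-$g$ diagonal. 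Applying $\psi^{N_{1}}$ to the first factor and $\psi^{N_{2}}$ to the second produces the two-parameter sum $\sum_{i,j}N_{1}^{j}N_{2}^{i}\,\mathfrak{F}^{-1}_{i}\circ\mathfrak{F}_{j}$, from which the individual bi-graded pieces can be isolated by Vandermonde inversion in $(N_{1},N_{2})$.

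The last step is to combine this bi-graded separation with Ng\^{o}'s $\delta$-inequality, which controls the support of $\mathcal{P}$ and its convolutions fibrewise over $B$: the $\delta$-estimate forces each Adams-twisted composition $(\psi^{N_{2}}\mathcal{P}^{-1})\circ(\psi^{N_{1}}\mathcal{P})$ to have support of codimension at least $g$ in $\cjac\times_{B}\cjac$, so that after Vandermonde inversion each bi-graded piece $\mathfrak{F}^{-1}_{i}\circ\mathfrak{F}_{j}$ inherits the same codimension-$g$ support condition. Since this piece lives in $\Chow^{\,i+j-g}$, it is forced to vanish whenever $i+j-g<g$, i.e.\ whenever $i+j<2g$. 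The main obstacle --- and the reason the resulting vanishing is one-sided, rather than Beauville's symmetric $i+j\neq 2g$ --- is that the $\delta$-inequality produces a codimension lower bound only on one side, so the missing half of Beauville's symmetry is genuinely broken in the singular setting. The most delicate technical point is ensuring that the refined Gysin pullbacks entering the composition of correspondences interact correctly with the Cohen--Macaulay support of $\mathcal{P}$, for which the nonsingularity of the total space of $\cjac$ is indispensable.
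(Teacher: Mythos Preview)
Your proposal is correct and follows the same approach as the paper's sketch (which in turn summarises \cite[\S 3.5]{MSY}): Adams operations stand in for $[N]^{*}$, Vandermonde inversion separates the bi-graded pieces $\mathfrak{F}^{-1}_{i}\circ\mathfrak{F}_{j}$, and the $\delta$-inequality supplies the codimension bound that forces the vanishing for $i+j<2g$ via the Chow-theoretic dimension count. Your closing caveat is well placed: the assertion that each Adams-twisted composition has support of codimension at least $g$ is precisely the step where the $\delta$-estimate on the locus where Arinkin's $\mathcal{P}$ fails to be a line bundle does real work, and it is this point that \cite[\S 3.5]{MSY} unpacks carefully.
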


With half of the vanishings, it is then possible to define operators 
\begin{equation}\label{eq:halfvanishing}
\mathfrak{p}_k:=\sum_{i\leq k} \mathfrak{F}_i\circ \mathfrak{F}^{-1}_{2g-i}, 
\end{equation}
such that
\begin{equation}\label{eq:projectorspk}
\mathfrak{p}_k\circ \mathfrak{p}_k=\mathfrak{p}_k; \quad \mathfrak{p}_{k+1}\circ \mathfrak{p}_k=\mathfrak{p}_k.
\end{equation}
Thanks to the operators $\mathfrak{p}_k$ one can extend the results of section \ref{sec:pcabelian} as follows.
\begin{thm}[\cite{MSY}, Theorem 0.2]\label{thm:theorem0.2}
Let $C\rightarrow B$ be a family of integral locally planar curves with a section and let $\pi:\cjac\rightarrow B$ be the associated compactified Jacobian fibration. Suppose further that $\cjac$ is nonsingular.
\begin{enumerate}
\item (Realization of $P$ via Fourier--Mukai) Setting, for each $k$
$\mathfrak{p}_k:=\sum_{i\leq k} \mathfrak{F}_i\circ \mathfrak{F}^{-1}_{2g-i}$ one has
$$P_kH^*(\cjac,\mathbb{Q})= \mathrm{Im}\mathfrak{p}_k\subset H^*(\cjac,\mathbb{Q});$$
\item (Multiplicativity)
for each $k,l$ one has
$$\cup: \ P_kH^*(\cjac,\mathbb{Q})\times P_kH^*(\cjac,\mathbb{Q})\rightarrow P_{k+l}H^*(\cjac,\mathbb{Q});$$
\item (Perverse $\supseteq$ Chern) for all $k$
$$\mathfrak{F}_k(H^*(\cjac,\mathbb{Q}))\subseteq P_kH^*(\cjac,\mathbb{Q}).$$
\end{enumerate}
\end{thm}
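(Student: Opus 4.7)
The plan is to deduce all three statements from \autoref{prop:halfvanishing} together with the full support hypothesis built into the definition of a dualizable abelian fibration, mimicking the abelian scheme picture of \autoref{sec:pcabelian} with the Beauville decomposition replaced by the perverse filtration.

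\textbf{Step (1).} First I would invoke full support to decompose, via the decomposition theorem and relative Hard Lefschetz, $R\pi_*\BQ_{\cjac}[\dim \cjac] \simeq \bigoplus_{i=0}^{2g} IC_i[-i]$, with each $IC_i$ an intersection complex supported on all of $B$; consequently $P_k H^*(\cjac,\BQ) = \BH^*(B, \bigoplus_{i \leq k} IC_i[-i])$. Next I would show that the operator $\mathfrak{q}_k := \mathfrak{F}_k \circ \mathfrak{F}^{-1}_{2g-k}$ realises the projector onto the $k$-th summand. Over the open dense locus $B^\circ \subseteq B$ where $\pi$ restricts to a genuine abelian scheme, this is exactly Beauville's result recalled in \autoref{sec:pcabelian}: the $\mathfrak{q}_k$ diagonalise $R\pi_*\BQ|_{B^\circ}$ with $\mathfrak{q}_k$ picking out $R^k\pi_*\BQ[-k]$. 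Since the full support property forces every summand $IC_i$ to be determined by its restriction to $B^\circ$, the cycle-theoretic identity of operators over $B^\circ$ extends across $B$. From the relations $\mathfrak{p}_k \circ \mathfrak{p}_k = \mathfrak{p}_k$ and $\mathfrak{p}_{k+1}\circ\mathfrak{p}_k = \mathfrak{p}_k$ recorded in \eqref{eq:projectorspk}, the subspaces $\mathrm{Im}\,\mathfrak{p}_k$ form an increasing filtration whose $k$-th graded piece matches $\BH^*(B, IC_k[-k])$, giving $P_k H^*(\cjac,\BQ) = \mathrm{Im}\,\mathfrak{p}_k$.

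\textbf{Step (2).} Multiplicativity combines (1) with the convolution structure intrinsic to a dualizable abelian fibration: the convolution kernel $\mathcal{K}$ on the triple fibre product is supported in codimension $g$, so the induced Pontryagin product $*$ on $H^*(\cjac,\BQ)$ (via self-duality $\cjac \simeq \cjac^{\vee}$) raises the Chern grading additively. Fourier--Mukai intertwines cup product with convolution up to a Todd correction. Unwinding via (1), every class in $P_k H^*(\cjac,\BQ) = \mathrm{Im}\,\mathfrak{p}_k$ is a sum $\sum_{i \leq k} \mathfrak{F}_i(\xi_i)$; cupping two such classes and applying $\mathfrak{F}^{-1}$ produces a convolution of classes of Chern degrees at most $k$ and at most $l$ respectively, whose output has Chern degree at most $k+l$ by the codimension bound on $\mathcal{K}$. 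Reapplying $\mathfrak{F}$ and invoking (1) yields $P_k H^*(\cjac,\BQ) \cup P_l H^*(\cjac,\BQ) \subseteq P_{k+l} H^*(\cjac,\BQ)$.

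\textbf{Step (3).} Given (1), it is enough to check that $\mathfrak{p}_k(\mathfrak{F}_k(\beta)) = \mathfrak{F}_k(\beta)$ for every $\beta \in H^*(\cjac,\BQ)$. Starting from $\mathfrak{F}_k(\beta) = \mathfrak{F} \circ \mathfrak{F}^{-1}(\mathfrak{F}_k(\beta)) = \sum_{i,j} \mathfrak{F}_i \circ \mathfrak{F}^{-1}_j \circ \mathfrak{F}_k(\beta)$, I would retain only those summands which are cohomologically degree-preserving, i.e.\ those with $i+j = 2g$ (the remaining terms lie in strictly different cohomological degrees than $\mathfrak{F}_k(\beta)$ and must vanish component-wise once the identity is decomposed by degree shift). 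Then \autoref{prop:halfvanishing} applied to $\mathfrak{F}^{-1}_{2g-i} \circ \mathfrak{F}_k$ discards every term with $i > k$, leaving exactly $\mathfrak{F}_k(\beta) = \sum_{i \leq k} \mathfrak{F}_i \circ \mathfrak{F}^{-1}_{2g-i} \circ \mathfrak{F}_k(\beta) = \mathfrak{p}_k(\mathfrak{F}_k(\beta))$, as desired.

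\textbf{The main obstacle} is Step (1): matching the cycle-theoretic projectors $\mathfrak{p}_k$ with the geometric perverse filtration requires Arinkin's Cohen--Macaulay Poincar\'e kernel, full support of the decomposition theorem, and the delicate propagation of the abelian-scheme identification from $B^\circ$ across the singular locus. Once this identification is established, steps (2) and (3) are essentially formal consequences of \autoref{prop:halfvanishing} together with the codimension of the convolution kernel.
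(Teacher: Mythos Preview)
Your proposal is correct and follows essentially the same route as the paper's sketch: reduce Step~(1) to the abelian-scheme locus via full support (Ng\^o), deduce Step~(2) from the convolution identity $\mathfrak{F}^{-1}(\alpha\cup\beta)=\mathfrak{F}^{-1}(\alpha)*\mathfrak{F}^{-1}(\beta)$ together with the codimension-$g$ bound on the kernel, and obtain Step~(3) formally from the half-vanishing. The only wobble is your justification in Step~(3) for restricting to $i+j=2g$: the operators $\mathfrak{F}_i\circ\mathfrak{F}^{-1}_j$ do not shift cohomological degree on $H^*(\cjac,\BQ)$ by a fixed amount (only $\geq 2(i+j)-4g$), so the ``degree-preserving'' argument should be run at the level of correspondence classes in $H^*(\cjac\times_B\cjac,\BQ)$, where $[\Delta]$ is homogeneous and forces $\sum_{i+j=2g}\mathfrak{F}_i\circ\mathfrak{F}^{-1}_j=\mathrm{id}$; the paper sidesteps this by using the complementary projector $\mathfrak{q}_{k+1}=\sum_{i\geq k+1}\mathfrak{F}_i\circ\mathfrak{F}^{-1}_{2g-i}$ and checking $\mathfrak{q}_{k+1}\circ\mathfrak{F}_k=0$ directly.
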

\begin{proof}(Sketch)
	Let 
	$$P'_kH^*(\cjac,\mathbb{Q}):= \mathrm{Im}\mathfrak{p}_k.$$
	The first condition in \eqref{eq:projectorspk} implies that $\mathfrak{p}_k$ is a projection, while the second condition ensures that $P'_{\bullet}$ is an increasing filtration on the cohomology of $\cjac$:
	$$P'_0H^*(\cjac,\mathbb{Q})\subseteq P'_1H^*(\cjac,\mathbb{Q})\subseteq \ldots \subseteq P'_{2g}H^*(\cjac,\mathbb{Q}).$$
	Moreover, as a direct consequence of the vanishing in \eqref{eq:halfvanishing} one has
	\begin{equation}\label{eq:fourierstabp} \mathfrak{F}^{-1}(P'_k H^d(\cjac,\mathbb{Q}))\in H^{\geq d+2g-k}(\cjac,\mathbb{Q})) ,\ \forall \ k \end{equation}
	which can be thought as the generalization of Fourier stability.
\begin{enumerate}[1.]
\item  We want to prove that the filtration $P'$ defined above is indeed the perverse filtration associated with $\pi$. This is done by Ng\^o support theorem: since the properties which imply Proposition \ref{prop:halfvanishing} can be checked étale locally and the perverse cohomology groups $^\mathfrak{p}\mathcal{H}^i$ have full support, to prove that $P$ and $P'$ coincide, one essentially needs to show that $P'$ recovers the Leray filtration on the regular locus of the family, and this holds since the restriction of the $\pi:\cjac\rightarrow B$ to the regular locus is an abelian scheme. The full support condition implies that $P'$ coincides with the perverse Leray filtration $P$ on the whole family. 

\item As in the abelian case, the proof of the multiplicativity involves compatibility of $\mathfrak{F}$ with a convolution product. Again, Arinkin shows that it is possible to extend the convolution product of the smooth locus of the family (which is an abelian scheme) so that the support of the convolution kernel is still of codimension $g$ and 
\begin{equation}\label{eq:convolutionjac}
*:H^d(\cjac,\mathbb{Q})\times H^{d'}(\cjac,\mathbb{Q})\rightarrow H^{\geq d+d'-2g}(\cjac,\mathbb{Q}).
\end{equation}
Now, given $\alpha \in P_kH^d(\cjac,\mathbb{Q})$ and $\beta\in P_lH^d(\cjac,\mathbb{Q})$ we have that
$$\mathfrak{F}^{-1}(\alpha\cup \beta)=\mathfrak{F}^{-1}(\alpha)*\mathfrak{F}^{-1}(\beta).$$
Then the multiplicativity of $P_\bullet$ is implied by \eqref{eq:convolutionjac} and \eqref{eq:fourierstabp} following the same argument as in the abelian scheme case.
\item Setting $\mathfrak{q}_k=\sum_{i\geq k+1}\mathfrak{F}_i\circ\mathfrak{F}^{-1}_{2g-i}$, it is not difficult to see that 
$$H^*(\cjac,\mathbb{Q})=\mathfrak{p}_kH^*(\cjac,\mathbb{Q})\oplus \mathfrak{q}_{k+1}H^*(\cjac,\mathbb{Q}).$$
Hence, given $\alpha\in P_kH^*(\cjac,\mathbb{Q})$ it suffices to show that 
	$$ \mathfrak{q}_{k+1}\circ\mathfrak{F}(\alpha)=0.$$
This follows by the definition of $\mathfrak{q}_{k+1}$ and \eqref{eq:halfvanishing}.
\end{enumerate}
\end{proof}

\subsection{Compactified Jacobian without a section}\label{sec:jacwithoutsec}
A subsequent step leading to P=C phenomena and the proof of the P=W conjecture is to extend Theorem \ref{thm:theorem0.2} to the case of relative compactified Jacobians of families $C\rightarrow B$ having smooth total space but equipped only with a multisection of degree $r$ finite and flat over $B$. 
This is for example the case of the Hitchin fibration or the Le Poitier moduli spaces of sheaves on $\mathbb{P}^2$.
In both cases, one considers relative compactified Jacobians $\pi_d:\cjac^d\rightarrow B$ of torsion free semistable sheaves of degree $d$ on the fibers of $C\rightarrow B$.
When there is no honest section $B\rightarrow C$, extending the result of Theorem $\ref{thm:theorem0.2}$ to $\pi_d:\cjac^d\rightarrow B$ is far from immediate and we refer to \cite[\S 4]{MSY} for more details.  In fact there are two main technical obstacles.
\begin{enumerate}[1.]
\item First, the universal sheaf on $C\times \cjac^d$ might not exist.
\item Second, even if it exists, it might not be unique and different choices of a universal family might affect the Poincaré sheaf and thus the associated Fourier--Mukai transform (which should recover the perverse filtration). The presence of a section allows to trivialize the universal family along it, thus eliminating the ambiguity given by a line bundle pulled back from $\cjac^d$.
\end{enumerate}

%
%

\subsection{Proof of the P=W conjecture}

The final step in the proof of the $P=W$ conjecture is to apply the machinery developed for $\pi_d$ to the Hitchin fibration $h$.
Let us summarize how this is achieved.
\begin{itemize}
	\item As mentioned at the beginning of the section, the proof of the P=W conjecture can be reduced to showing that the Chern filtration $C_k$ defined by the Chern character of the universal family on the moduli space is contained in the perverse filtration associated with $h$. Since two universal families on $\mdol\times \Sigma$ differ by the pullback of a line bundle on $\mdol$, we introduce the notion of \ita{normalized Chern characters} so that the filtration is defined $C_k$ without ambiguity, 
    This is explained in section \ref{sec:normtauclasses}.
    \item Then we take  family of integral curves $C\rightarrow B$ lying on a surface $S$ (for example one might think of $S$ as the total space of a bundle over $\Sigma$ and take a linear system of curves inside it) and considers the associated relative compactified Jacobian fibration $\cjac^d$. In section \ref{sec:curvesonsurfaces}, we show that the perverse filtration associated with the map $\mathrm{id}_S\times \pi_\times: S\times \cjac^d\rightarrow S\times B$ is multiplicative and that there are well defined Fourier--Mukai operators $\mathfrak{F}_k$, whose image is in the right perversity to ensure that the Chern $\subseteq$ Perverse.
    \item Finally, in section \ref{sec:modulispaces} we introduce several moduli spaces of Higgs bundles with decorations and we first prove P=C for the moduli space $\mdol^{\mathrm{parell}}$ of parabolic Higgs bundles having integral associated spectral curve and such that the residue $\theta_p$ has distinct eigenvalues over a fixed point $p$. The moduli space $\mdol^{\mathrm{parell}}$ is equipped with a finite group action whose quotient is a relative compactified Jacobian fibration of curves lying in an appropriate surface (namely the total space of the bundle $\Omega_1^(p)$ on $\Sigma$). Applying the results in the previous step, one deduces P=C for $\mdol^{\mathrm{parell}}$ and from it the P=C for $\mdol$. 
   \end{itemize}

\subsubsection{Normalized tautological classes and Chern filtration}\label{sec:normtauclasses}
Fix  integers $r,n$ such that $r>0$, $(r,n)=1$ and denote $\mdol(r,n)$ simply by $\mdol$.
The coprimality condition ensures that $\mdol$ admits a rank $r$ universal family $(\mathcal{U},\theta)$ on $\mdol\times \Sigma$. 
 Given a class 
 $$\delta:=p_\Sigma^*\delta_\Sigma+p_{\mathcal{M}}^*\delta_{\mathcal{M}} \in H^2(\mdol\times\Sigma,\mathbb{Q})$$
 with $\delta_\Sigma\in H^2(\Sigma,\mathbb{Q})$ and $\delta_{\mathcal{M}} \in H^2(\mdol,\mathbb{Q})$,
 one defines the \ita{twisted Chern Character} $\mathrm{ch}^\delta(\mathcal{U})$ as 
 $$\mathrm{ch}^\delta(\mathcal{U}):=\mathrm{ch}(\mathcal{U})\cup \mathrm{exp}(\delta)\in H^*(\mdol\times\Sigma,\mathbb{Q})$$
and denote by $\mathrm{ch}_k^\delta(\mathcal{U})$ its degree $k$ component in $H^{2k}(\mdol\times\Sigma,\mathbb{Q})$.
The class $\mathrm{ch}^\delta(\mathcal{U})$ is said to be \ita{normalized} if, with respect to the K\"unneth decomposition of $H^2(\mdol\times\Sigma,\mathbb{Q})$, one has
$$\mathrm{ch}_1^{\delta}(\mathcal{U})\in H^1(\Sigma,\mathbb{Q})\otimes H^1(\mdol,\mathbb{Q}).$$

Since a universal family $(\mathcal{U},\theta)$ on $\mdol\times \Sigma$ is determined up to twisting by the pullback of a line bundle on $\mdol$, a direct calculation shows normalized $\mathrm{ch}^\delta(\mathcal{U})$ are do not depend on the choice of $(\mathcal{U},\theta,\delta)$. 
We are now in a position to define the tautogical classes as in \eqref{eq:taugen}: for any $\gamma\in H^*(\Sigma,\mathbb{Q})$ and a normalized universal family $(\mathcal{U},\theta, \delta)$ we set
$$c_k(\gamma):=p_{M*}(p_\Sigma^*\gamma\cup \mathrm{ch}_k^{\delta}(\mathcal{U})).$$
The classes $c_k(\gamma)$ generate the cohomology of $\mdol$.
\begin{defn} We define the \ita{Chern filtration} $C_\bullet$ on $H^*(\mdol,\mathbb{Q})$ as 
$$C_kH^*(\mdol,\mathbb{Q}):=\mathrm{Span}\left\lbrace \prod_{i=1}^s c_{k_i}(\gamma)\mid \sum_{i=1}^s k_i\leq k\right\rbrace\subseteq H^*(\mdol,\mathbb{Q}).$$
The integer $\sum_{i=1}^s k_i$ associated with any class in $C_kH^*(\mdol,\mathbb{Q})$ is called its \ita{Chern grading}.
\end{defn}
\begin{rmk} The definition of Chern filtration $C_\bullet$ implies that it is multiplicative with respect to the cup product on $H^*(\mdol,\mathbb{Q})$.
\end{rmk}
We recall that Shende \cite{Shende17} proved that, on the Betti side, one has
$$C_kH^*(\mb,\mathbb{Q})=W_{2k}H^*(\mb,\mathbb{Q}).$$
As a result proving the P=W conjecture amounts to showing P=C. Reducing further, one just needs to prove that P$\supseteq $C as the other inclusion is granted via Curious Hard Lefschetz.  
So far, for compactified Jacobians and their variants we have been able to relate $P$ with the Chern filtration arising from Fourier transforms. 
Since on a dense open set the Hitchin fibration reduces to a family of  compactified Jacobian of integral curves, it is natural to investigate whether the above defined Chern filtration $C_k$ can be expressed in terms of Fourier Transforms. 

\subsubsection{Curves on surfaces and tautological classes}\label{sec:curvesonsurfaces}

Let $S$ be a nonsingular projective surface, and let $d$ be an integer. We assume that $C \to B$ is a flat family of integral curves lying in $S$. Assume $H\subset S$ is a divisor which does not contain any curve $C_b$ in the family. Then $H\subset S$ yields a multisection 
\[
D := \mathrm{ev}^{-1}(H) \subset C \to B
\]
where $\mathrm{ev}$ denotes the evaluation map $\mathrm{ev}: C \to S$ from the total space of $C$ to the surface.
Again, we consider $\pi_d: \cjac^d \to B$ the associated compactified Jacobian, assuming that both~$C$ and $\cjac^d$ are nonsingular. Moreover we denote by $\mathrm{AJ}$ the (stacky) Abel--Jacobi maps\footnote{the second is obtained pulling back the first by the $\mu_r$-gerbe $\cjacg^d\to \cjac^d$.} \[
\mathrm{AJ}: C \to \overline{J}^1_C, \quad \mathrm{AJ}: \CC  \to \overline{\CJ}^1_C,
\]
where $\mathcal{C}$ is a $\mu_r$-gerbe over $C$.
Next, we consider the closed embeddings
\begin{equation}\label{ev/AJ}
\overline{\mathrm{ev}}: = {\mathrm{ev}}\times_B \mathrm{id}_{\overline{J}^d_C}: C \times_B \overline{J}^d_C \to S \times \overline{J}^d_C, \quad \overline{\mathrm{AJ}}:= {\mathrm{AJ}}\times_B \mathrm{id}_{\overline{\CJ}^d_C}: \CC \times_B\overline{\CJ}^d_C \to \overline{\CJ}^1_C \times_B\overline{\CJ}^d_C.
\end{equation} 
We can define Fourier transforms associated with a Poincaré sheaf $\mathcal{P}_{1,d}$ on $\overline{\CJ}^1_C \times_B\overline{\CJ}^d_C$:
\begin{equation*}
\mathfrak{F}=\sum \mathfrak{F}_i \in H^*(\cjacg^1\times_B \cjacg^d), \quad \mathfrak{F}_i \in H^{2(\dim B+2g+i)}(\cjacg^1\times_B \cjacg^d).
\end{equation*}

Now, if there is a universal sheaf $F_d$ on $C\times_B\cjac^d$, this gives a family $\overline{F}_d$ of $1$-dimensional sheaves on $S$ by pushing forward via $\overline{\mathrm{ev}}$: 
\begin{equation}\label{univ_F}
\overline{F}^d:= \overline{\mathrm{ev}}_* F^d \rightsquigarrow S \times \overline{J}^d_C.
\end{equation}

For proving P=C=W one needs to express $\mathrm{ch}(\overline{F}_d)$ in terms of $\mathfrak{F}$.
\begin{prop}[\cite{MSY}, Proposition 5.1]\label{prop:exprchern}
Consider the morphism $\overline{\pi}_d:=\mathrm{id}_S\times \pi_d: S\times \cjac^d\rightarrow S\times B$.
\begin{enumerate}
    \item[(i)]  One has
\[
\mathrm{ch}(\overline{F}^d) = \left( \overline{\mathrm{ev}}_* \left(  \mathrm{exp}(\lambda\cdot p^*_{C}D) \cap  \overline{\mathrm{AJ}}^* \mathfrak{F}\right)\right) \cup  \left(\frac{{l_0}}{1- \mathrm{exp}(-(l_0))} \right) \cup \mathrm{exp}(p_J^*l_J) \in H^*(S\times \overline{J}^d_C, \BQ),
\]
where $\lambda \in \BQ$ is a constant, $l_J \in H^2(\overline{J}^d_C, \BQ)$ and $l_0$ is such that the Todd class with respect to $\overline{ev}$ is given by $\mathrm{td}(T_{\overline{ev}})=\dfrac{\overline{ev}^*l_0}{1-\exp(-\overline{ev}^*l_0)}$.

\item[(ii)] Let $P_\bullet H^*(S\times \overline{J}_C^d, \BQ)$ be the perverse filtration associated with $\overline{\pi}_d$. This perverse filtration is multiplicative and for any class $\beta \in H^*(C, \BQ)$, one has
\[
\overline{\mathrm{ev}}_*\left( 
p_C^*\beta \cap
\overline{\mathrm{AJ}}^* \mathfrak{F}_k   \right)\in P_kH^{\geq 2k+2}(S\times \overline{J}_C^d, \BQ).
\]
\end{enumerate}
\end{prop}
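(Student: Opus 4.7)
The plan for part (i) is a direct Grothendieck--Riemann--Roch computation applied to the closed embedding $\overline{\mathrm{ev}}:C\times_B\overline{J}_C^d\hookrightarrow S\times \overline{J}_C^d$, giving
\[
\mathrm{ch}(\overline{F}^d)=\overline{\mathrm{ev}}_*\bigl(\mathrm{ch}(F^d)\cdot \mathrm{td}(T_{\overline{\mathrm{ev}}})\bigr).
\]
The next step is to express $F^d$ via the Poincar\'e sheaf: by construction of $\mathcal{P}_{1,d}$ and the Abel--Jacobi map, $\overline{\mathrm{AJ}}^*\mathcal{P}_{1,d}$ agrees with $F^d$ up to a twist by a line bundle $\mathcal{L}$ on $C\times_B\overline{J}^d_C$. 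K\"unneth-decomposing $c_1(\mathcal{L})$, absorbing the curve-direction ambiguity through the multisection $D=\mathrm{ev}^{-1}(H)$ and the $\overline{J}^d_C$-direction ambiguity into a class $l_J\in H^2(\overline{J}^d_C,\mathbb{Q})$, gives $c_1(\mathcal{L})=\lambda\cdot p_C^*D+p_J^*l_J$ for some $\lambda\in\mathbb{Q}$. Substituting $\mathrm{ch}(F^d)=\overline{\mathrm{AJ}}^*\mathfrak{F}\cdot \exp(\lambda p_C^*D)\cdot \exp(p_J^*l_J)$ into GRR and using the projection formula for $\overline{\mathrm{ev}}$ to extract the factors $\exp(p_J^*l_J)$ and $\mathrm{td}(T_{\overline{\mathrm{ev}}})=\overline{\mathrm{ev}}^*\bigl(l_0/(1-e^{-l_0})\bigr)$, both of which descend from $S\times \overline{J}^d_C$, produces the stated formula.

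For (ii), the multiplicativity claim reduces to multiplicativity on $\overline{J}^d_C$ by a K\"unneth argument: base change gives $R\overline{\pi}_{d*}\mathbb{Q}\simeq \mathbb{Q}_S\boxtimes R\pi_{d*}\mathbb{Q}$ on $S\times B$, and since $\mathbb{Q}_S[\dim_{\mathbb{C}}S]$ is a perverse sheaf on the smooth projective surface $S$, perverse truncation commutes with exterior product. Consequently $P_k H^*(S\times \overline{J}^d_C)$ decomposes via K\"unneth into $H^*(S)$-tensor factors of $P_k H^*(\overline{J}^d_C)$, and multiplicativity on the product follows from Theorem \ref{thm:theorem0.2}(2) (extended to the multisection case as in Section \ref{sec:jacwithoutsec}) combined with the trivial multiplicativity on $H^*(S)$.

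For the perversity bound, the K\"unneth decomposition above lets us test the statement on the $\overline{J}^d_C$-factor. Using the identity $p_{\overline{J}^d_C}\circ \overline{\mathrm{ev}}=p_{\overline{J}^d_C}$ to push $\overline{\mathrm{ev}}_*\bigl(p_C^*\beta\cdot \overline{\mathrm{AJ}}^*\mathfrak{F}_k\bigr)$ down to $\overline{J}^d_C$ and applying the projection formula for the closed embedding $\overline{\mathrm{AJ}}$ identifies the result with $\mathfrak{F}_k(\mathrm{AJ}_*\beta)\in P_k H^*(\overline{J}^d_C)$ by Theorem \ref{thm:theorem0.2}(3). The same argument applied slicewise over the $H^*(S)$-components of the class on the source yields the $P_k$-perversity of the class on $S\times \overline{J}^d_C$. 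The degree bound $\geq 2k+2$ is a cohomological degree count: $\overline{\mathrm{AJ}}^*\mathfrak{F}_k$ has cohomological degree at least $2k$ (being pulled back from the Chern degree-$k$ component of the Poincar\'e character), and the codimension-one pushforward $\overline{\mathrm{ev}}_*$ adds $2$.

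The hardest step is the perversity bound in (ii): the fibre product $C\times_B\overline{J}^d_C$ is not a direct product, so extracting a clean K\"unneth-type splitting of $\overline{\mathrm{AJ}}^*\mathfrak{F}_k$ and transferring the perversity information from $\overline{J}^d_C$ to $S\times\overline{J}^d_C$ through $\overline{\mathrm{ev}}_*$ requires care over the singular locus of $\pi_d$. This is precisely where Arinkin's Cohen--Macaulay extension of the Poincar\'e sheaf and the full-support decomposition of $R\pi_{d*}\mathbb{Q}$ enter, allowing an \'etale-local reduction to the abelian scheme case on the regular locus and extension globally via Ng\^o's support theorem.
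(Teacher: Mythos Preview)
The survey does not supply its own proof of this proposition---it is quoted from \cite{MSY}---so there is no in-paper argument to match against. That said, your outline is the standard route and is essentially how the statement is obtained in \cite{MSY}: part (i) is Grothendieck--Riemann--Roch for the divisorial embedding $\overline{\mathrm{ev}}$ together with the identification $F^d\simeq \overline{\mathrm{AJ}}^*\CP_{1,d}\otimes \CL$ coming from Arinkin's construction of the Poincar\'e sheaf, and part (ii) is the K\"unneth reduction you describe (since $\BQ_S[2]$ is perverse, external product with it is $t$-exact, whence $P_kH^*(S\times\overline{J}^d_C)\cong H^*(S)\otimes P_kH^*(\overline{J}^d_C)$ after the obvious shift). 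Two small corrections: the input you need in (ii) is Corollary~\ref{cor:4.5intro} (the multisection case, with Fourier transform built from $\CP_{1,d}$ on the gerby product $\overline{\CJ}^1_C\times_B\overline{\CJ}^d_C$), not Theorem~\ref{thm:theorem0.2} which assumes a section; and the line-bundle ambiguity $\CL$ does not split K\"unneth-style on the fibre product $C\times_B\overline{J}^d_C$ for free---one uses that the construction of $\CP_{1,d}$ already pins down $\CL$ as a product of pullbacks up to a class from $B$.

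Your final paragraph, however, misplaces the difficulty. Once Corollary~\ref{cor:4.5intro} is available, Proposition~\ref{prop:exprchern}(ii) is a formal consequence of the K\"unneth argument you already gave: slicing against $p_S^*\gamma$ and pushing to $\overline{J}^d_C$ yields classes of the form $\mathfrak{F}_k(\mathrm{AJ}_*(\beta\cup\mathrm{ev}^*\gamma))$, which lie in $P_k$ by the Perverse\,$\supseteq$\,Chern statement for $\pi_d$. There is no need to re-invoke Arinkin's Cohen--Macaulay extension or Ng\^o's support theorem at this stage---that machinery was consumed in proving Theorem~\ref{thm:pcfordab} and its corollary, and Proposition~\ref{prop:exprchern} is an \emph{application} of those results, not a parallel derivation. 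The genuinely delicate step in the chain is upstream, in \S\ref{sec:jacwithoutsec}, where one has to set up the twisted Poincar\'e sheaf and verify (FV) without a section.
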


\subsubsection{The moduli spaces}\label{sec:modulispaces}
Fix two coprime integers $n$ and $r$ with $r>0$ and we consider the moduli space $\mdol:=\mdol(r,n)$ of rank $r$ and degree $n$ Higgs bundles on the curve $\Sigma$. Let us also fix $p\in \Sigma$.
We set
\begin{enumerate}
    \item[$\bullet$] $\mdol$: the moduli space of stable Higgs bundles 
    \[
    (E, \theta), \quad  \theta: E \to E \otimes \Omega^1_\Sigma.
    \]
    \item[$\bullet$] $\mdol^{\mathrm{mero}}$: the moduli space of stable meromorphic Higgs bundles 
    \[
    (E, \theta), \quad  \theta: E \to E\otimes \Omega^1_\Sigma(p).
    \]
    \item[$\bullet$] $\mdol^{\mathrm{par}}$: the moduli space of stable parabolic Higgs bundles
    \[
    (E, \theta, F^\bullet),\quad \theta: E \to E \otimes \Omega^1_\Sigma(p), \quad 
    \]
    where $F^\bullet$ is a complete flag on the fiber $E_p$, and the residue $\theta_p$ preserves the flag.
    \item[$\bullet$] $\mdol^0 \subset \mdol^{\mathrm{par}}$: the moduli of parabolic Higgs bundles such  that the residue $\theta_p$ is nilpotent.
    \item[$\bullet$] $\mdol^{\mathrm{parell}} \subset \mdol^{\mathrm{par}}$: the moduli of parabolic Higgs bundles having integral associated spectral curve and such that the residue $\theta_p$ has $n$ distinct eigenvalues over $p$.
    \item[$\bullet$] $\widetilde{\mathcal{M}}_{Dol} \subset \mdol^0$: the moduli of parabolic Higgs bundles with trivial residue at the point $p$, \emph{i.e.}~$\theta_p=0$.
\end{enumerate}

Each of the above moduli spaces admits a proper Hitchin map defined in the obvious way by calculating the characteristic polynomial of the Higgs field, from which we may define the corresponding perverse filtrations. 
Also, for each of them there is a universal bundle $\mathcal{U}$ which allows to define the normalized tautological classes 
\[
c_k(\gamma), \quad k \in \BN, \quad \gamma \in H^*(\Sigma, \BQ)
\]
as in \eqref{eq:taugen}. 

We summarize the relations between moduli spaces above by the diagram
\begin{equation}\label{Hitchin_moduli}
\mdol \xleftarrow{~~f~~} \widetilde{\mathcal{M}}_{Dol} \xhookrightarrow{~~\widetilde{\iota}~~} \mdol^0 \xhookrightarrow{~~\iota_0~~} {\mdol}^{\mathrm{par}} \xhookleftarrow{~~\iota~~} \mdol^{\mathrm{parell}} \xrightarrow{~~q~~} \overline{J}^d_C. 
\end{equation}
While $\widetilde{\iota}, \iota_0, \iota$ are natural inclusions, the map $f: \widetilde{\mathcal{M}}_{Dol} \to \mdol$ is given by forgetting the flag. The symbol $C$ in $ \overline{J}^d_C $ stands for 
\[
C\to W \subset \bigoplus_{i=1}^r H^0\left(\Sigma, \Omega^1_{\Sigma}(p)^{\otimes i}\right),
\]
the family of integral spectral curves in the total space of the bundle  $\Omega^1_\Sigma(p)$ on $\Sigma$ which intersects the fiber over $p$ in $r$ distinct points. 
From this perspective, one can think of $\overline{J}^d_C$ as the open subvariety of $\mdol^{\mathrm{mero}}$ given by the pre-image of $W$ under the corresponding Hitchin map: more precisely, the pre-image is isomorphic to the compactified Jacobian fibration associated with $C \to W$. Finally,  $\mdol^{\mathrm{parell}}$ is equipped with a natural $\mathfrak{S}_r$-action permuting the complete flag: the condition that the eigenvalues of $\theta_p$ are distinct ensures that this action is free. In this way we have a quotient map $q: \mdol^{\mathrm{parell}} \to \overline{J}^d_C$, which fits into the commutative diagram
\begin{equation}\label{Cart}
\begin{tikzcd}
\mdol^{\mathrm{parell}} \arrow[r, "q"] \arrow[d, "h"]
& \overline{J}^d_C \arrow[d, " "] \\
\widetilde{W} \arrow[r]
& W.
\end{tikzcd}
\end{equation}
Here, $\widetilde{W}$ can be interpreted as the parameter space of the spectral curves lying in $W$ which have a marking over $p\in \Sigma$ and whose projection to $W$ is the natural $\mathfrak{S}_r$-quotient. Since the map $h$ in \eqref{Cart} is the pullback of the compactified Jacobian fibration $\overline{J}^d_C$ along a finite \'etale map $\widetilde{W} \to W$, we deduce that the perverse filtration $P_\bullet H^*(\mdol^{\mathrm{parell}},\mathbb{Q})$ is multiplicative with respect to cup product.
As a result, establishing P$\supseteq$ C amounts to show that cupping with a tautological class $c_k(\gamma)$ increases perversity exactly by $k$.
\begin{prop}\label{prop:PCparallel} Let $c_k(\gamma)$ be the tautological classes on $\mdol^{\mathrm{parell}}$. Then
\[
\cup \ c_k(\gamma) : P_iH^*(\mdol^{\mathrm{parell}}, \BQ) \to P_{i+k}H^*(\mdol^{\mathrm{parell}}, \BQ).
\]
\end{prop}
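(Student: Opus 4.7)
The key observation, already emphasized in the excerpt, is that by the Cartesian square \eqref{Cart} the Hitchin map $h : \mdol^{\mathrm{parell}} \to \widetilde{W}$ is the pullback of the relative compactified Jacobian fibration $\pi_d : \overline{J}^d_C \to W$ along the finite étale cover $\widetilde{W} \to W$. Hence $\mdol^{\mathrm{parell}}$ is itself the relative compactified Jacobian of the pulled-back family $\widetilde{C} := C \times_W \widetilde{W} \to \widetilde{W}$, and the perverse filtration $P_\bullet H^*(\mdol^{\mathrm{parell}}, \BQ)$ is already multiplicative. Granted this multiplicativity, the proposition reduces to proving the membership
$$c_k(\gamma) \in P_k H^*(\mdol^{\mathrm{parell}}, \BQ), \qquad \gamma \in H^*(\Sigma, \BQ).$$

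The plan is then to apply Proposition \ref{prop:exprchern} to the family $\widetilde{C} \to \widetilde{W}$ sitting inside the surface $S = \mathrm{Tot}(\Omega^1_\Sigma(p))$ (passing to a compactification if needed): the required nonsingularity, multisection and integrality all descend from $C \to W$ by étaleness of $\widetilde{W} \to W$. This produces Fourier operators $\widetilde{\mathfrak{F}}_j$ on the appropriate relative Jacobian gerbe, a universal one-dimensional sheaf $\overline{F}^d$ on $S \times \mdol^{\mathrm{parell}}$, the containment
$$\overline{\mathrm{ev}}_*\bigl(p^*_{\widetilde{C}}\beta \cap \overline{\mathrm{AJ}}^*\widetilde{\mathfrak{F}}_j\bigr) \in P_j H^*(S \times \mdol^{\mathrm{parell}}, \BQ),$$
and multiplicativity of the perverse filtration on $H^*(S \times \mdol^{\mathrm{parell}}, \BQ)$ with respect to the product morphism $\overline{h} := \mathrm{id}_S \times h$. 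By the Beauville--Narasimhan--Ramanan correspondence, the underlying vector bundle $\mathcal{U}$ of the universal Higgs bundle on $\mdol^{\mathrm{parell}} \times \Sigma$ coincides with $(\pi_S \times \mathrm{id})_*\overline{F}^d$, where $\pi_S : S \to \Sigma$ is the bundle projection. Grothendieck--Riemann--Roch combined with the projection formula then turns the defining expression $c_k(\gamma) = p_{\mathcal{M}*}(p_\Sigma^*\gamma \cup \mathrm{ch}^\delta_k(\mathcal{U}))$ into a pushforward $\tilde p_{\mathcal{M}*}$ from $S \times \mdol^{\mathrm{parell}}$ of a sum of classes of the form $\tilde p_\Sigma^*\gamma' \cup [\mathrm{ch}(\overline{F}^d)]_j$ with $j \leq k$, where $\gamma'$ is obtained from $\gamma$ by cupping with the relative Todd class of $\pi_S$.

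To conclude, substituting Proposition \ref{prop:exprchern}(i) for $\mathrm{ch}(\overline{F}^d)$ and invoking part (ii), each summand inside the pushforward lies in $P_j H^*(S \times \mdol^{\mathrm{parell}}, \BQ)$ for some $j \leq k$. Since $\overline{h}$ is a product morphism, the K\"unneth isomorphism identifies $P_\bullet H^*(S \times \mdol^{\mathrm{parell}}, \BQ)$ with $H^*(S, \BQ) \otimes P_\bullet H^*(\mdol^{\mathrm{parell}}, \BQ)$, so the proper pushforward $\tilde p_{\mathcal{M}*}$ (which is well-defined here because $\overline{F}^d$ has proper support over $\mdol^{\mathrm{parell}}$) preserves the perversity index. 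This places $c_k(\gamma)$ in $P_k H^*(\mdol^{\mathrm{parell}}, \BQ)$, and the full proposition follows by multiplicativity.

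The main technical obstacle I expect lies in the bookkeeping of step two, namely the identification $\mathrm{ch}(\mathcal{U}) = (\pi_S \times \mathrm{id})_*\bigl(\mathrm{ch}(\overline{F}^d) \cdot \mathrm{td}(T_{\pi_S})\bigr)$. The universal families $\mathcal{U}$ and $\overline{F}^d$ are each defined only up to a twist by the pullback of a line bundle from $\mdol^{\mathrm{parell}}$, and the normalization $\delta$ on the Higgs side must precisely absorb the ambiguity encoded in the factor $\exp(p_J^*l_J)$ appearing in Proposition \ref{prop:exprchern}(i), so that the Todd correction along $\pi_S$ does not inflate the Chern grading beyond $k$. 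The $\mu_r$-gerbe $\cjacg^d$ enters whenever $\overline{F}^d$ fails to exist as a genuine sheaf, but the Fourier-theoretic estimate is robust under replacing sheaves by $\mu_r$-twisted ones throughout.
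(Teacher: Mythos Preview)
Your proposal is correct and follows the same approach as the paper: reduce to the membership $c_k(\gamma)\in P_k$ via multiplicativity, express $\mathrm{ch}(\mathcal{U})$ through Grothendieck--Riemann--Roch along the projection $S\to\Sigma$, and feed the result into Proposition~\ref{prop:exprchern}. The only cosmetic difference is that the paper first descends along the \'etale quotient $q:\mdol^{\mathrm{parell}}\to\overline{J}^d_C$ (noting that the tautological classes are pulled back from there) and carries out the computation on $\overline{J}^d_C$ with the original family $C\to W$ inside the compact surface $S=\BP_\Sigma(\Omega^1_\Sigma(p)\oplus\CO_\Sigma)$, whereas you base-change the family to $\widetilde{W}$ and argue directly on $\mdol^{\mathrm{parell}}$; either route works. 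One minor bookkeeping slip: since $\overline{F}^d$ is a one-dimensional sheaf on $S$, the Chern degrees shift by one, so $c_k(\gamma)$ actually involves $\mathrm{ch}^{\delta'}_j(\overline{F}^d)$ for $j\le k+1$ and the statement one verifies is $\mathrm{ch}^{\delta'}_{k+1}(\overline{F}^d)\in P_k$; the remainder of your argument is unaffected.
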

\
Since $P_\bullet H^*( \mdol^{\mathrm{parell}}, \BQ)$ is multiplicative by Corollary \ref{PCgeneral}, one just needs to show that
\[
c_k(\gamma) \in P_k H^*(\mdol^{\mathrm{parell}}, \BQ).
\]
Moreover, since this class is pulled back from $\overline{J}^d_C$, in view of the diagram (\ref{Cart}) it suffices to prove the corresponding statement for $\overline{J}^d_C$.

\par\noindent
\gr{Claim}: $c_k(\gamma) \in P_k H^*(\overline{J}^d_C, \BQ).$

Let $S$ be the total space of projectivization of the bundle $\Omega^1_\Sigma(p)$ over $\Sigma$:
\[
\mathrm{pr}: S:= \BP_\Sigma\left( \Omega^1_\Sigma(p) \oplus \CO_\Sigma\right) \to \Sigma.
\]
Then $C \to W$ can be viewed as a family of curves in the linear system $|r\Sigma|$ with $\Sigma \subset S$ the 0-section. A universal sheaf $\overline{F}^d$ on $S\times \overline{J}^d_C$ of Section \ref{sec:curvesonsurfaces} provides a universal bundle
\[
\mathcal{U}: = (\mathrm{pr} \times \mathrm{id}_J)_* \overline{F}^d \rightsquigarrow \Sigma \times \overline{J}^d_C.
\]
In particular, with the Grothendieck--Riemann--Roch formula with respect to 
\[
\overline{\mathrm{pr}}:=\mathrm{pr} \times \mathrm{id}_J: S\times \overline{J}^d_C \to \Sigma \times \overline{J}^d_C
\]
one can express the class $c_k(\gamma)$ in terms of the tautological classes associated with $S$ defined via $\overline{F}^d$. More precisely, the formula
\[
\mathrm{ch}(\mathcal{U}) = \overline{\mathrm{pr}}_*\left( \mathrm{ch}(\overline{F}^d) \cup p_S^* \mathrm{td}_{\mathrm{pr}}\right),
\]
implies that every tautological class $c_k(\gamma)$ can be written in terms of K\"unneth components of 
\[
\mathrm{ch}^{\overline{\mathrm{pr}}^*\delta}_{j}(\overline{F}^d), \quad j \leq {k+1}.
\]
We are thus reduced to showing  
\[
\mathrm{ch}^{\delta'}_{k + 1}(\overline{F}^d) \in P_{k}H^*(S\times \overline{J}^d_C, \BQ).
\]
with $\delta'=\overline{\mathrm{pr}}^*\delta$.
Now by item (i) of Proposition \ref{prop:exprchern}, one can express $\mathrm{ch}^{\delta'}_{k + 1}(\overline{F}^d)$ in terms of $\mathfrak{F}$ and $l_0$. In particular it will be a sum of terms of the form
$$\left( \overline{\mathrm{ev}}_* \left( \mathrm{exp}(\lambda\cdot p^*_{C}\beta_j) \cap  \overline{\mathrm{AJ}}^* \mathfrak{F}\right)\right) \cup \left( \gamma_j(l_0)\right), \quad \text{ for }j\leq k.
$$
Here $\gamma_j(l_0)$ is a polynomial in $l_0$ of degree $\leq j$.
Note that, by item (ii) of Proposition \ref{prop:exprchern}, we have
$$\left( \mathrm{exp}(\lambda\cdot p^*_{C}\beta_j) \cap  \overline{\mathrm{AJ}}^* \mathfrak{F}\right)\in P_jH^{\geq 2j+2}(S\times \overline{J}_C^d, \BQ).$$
Now, since the component of $l_0$ in $\cjac^d$ is pulled back from $W$, it belongs to $P_0H^2(S\times \cjac^d\mathbb{Q})\subseteq P_1H^2(S\times \cjac^d\mathbb{Q})$, thus 
$$\gamma_j(l_0)\in \bigoplus_{i\leq j} P_iH^{2i}(S\times \cjac^d\mathbb{Q}).$$
By the multiplicativity of $P_\bullet$, one can finally conclude 
$$ \mathrm{ch}^{\delta'}_{k + 1}(\overline{F}^d) \in P_{k}H^*(S\times \overline{J}^d_C, \BQ).$$

This completes the proof of Proposition \ref{prop:PCparallel} and establishes Perverse $\supseteq$ Chern for $\mdol^{\mathrm{parell}}$.

The last step in the proof of Maulik, Shen and Yin consists in reducing the proof of P$\supseteq$ C for $\mdol$ to that for $\mdol^{\mathrm{parell}}$. This is pursued following the strategy developed in the proof of Hausel--Mellit--Minets--Schiffmann \cite[Section 8]{HMMS2022}. We briefly recall it here; see also \cite[\S 5.4.3]{MSY} and \cite[\S 4.3]{surveyciky}. 
As already mentioned in section \ref{sec:theoriginalconjecture}, the P=W conjecture for $\mdol$ can be deduced from the following analogue of Proposition \ref{prop:PCparallel} for the moduli space $\mdol$:
\begin{equation}\label{P=W_final}
c_k(\gamma) \cup {} : P_iH^*(\mdol, \BQ) \to P_{i+k}H^*(\mdol, \BQ).
\end{equation}

If such a statement holds for one of the Hitchin type moduli spaces defined at the beginning of the section, we say that this moduli space satisfies \emph{stronger} P $\supseteq$ C. For the moduli space $\mdol$, the stronger P $\supseteq$ C condition is equivalent to the weaker version due to Markman's generation result \cite{Markman2002}. However for other spaces, these two conditions are not equivalent.

The argument goes by fixing a normalized pair $(\mathcal{U},\delta)$ on $\Sigma\times \mdol^{\mathrm{mero}}$ so that one can define tautological classes whose pullback to the other moduli spaces yield tautological classes on them. Then, knowing that P $\supseteq$ C hold on $\mdol^{\mathrm{parell}}$, one investigates how the stronger P $\supseteq$ C condition changes with respect to the morphisms in \eqref{Hitchin_moduli} from the right end to the left end. Since the P $\supseteq$ C is preserved by all morphisms in \eqref{Hitchin_moduli}, the proof is concluded.

		\newpage
		\appendix
		\section{Intersection cohomology and perverse sheaves}\label{sec:appendix}
		In this appendix we recall the main notions of the theory of perverse sheaves and intersection cohomology, which appear in the paper. For proofs, examples and several enlighting explanations, we refer to \cite{deCataldoMigliorini2009}.
		All cohomology groups are considered with rational coefficient, so we omit it in the notation.\\
		
		Let $Y$ be an algebraic variety. We denote by $D^b_c(Y)$ the bounded derived category of $\mathbb{Q}$-constructible complexes on $Y$. Let $\mathbb{D}\colon D^b_c(Y)\rightarrow D^b_c(Y)$ be the Verdier duality functor.  The two full subcategories 
		\begin{align*}
			{^{\mathfrak{p}}}D^b_{\leq 0}(Y) &:= \left \lbrace K^*\in D^b_c(Y) \mid \dim \mathrm{Supp}(\mathcal{H}^j(K^*))\leq -j \right \rbrace\\
			{^{\mathfrak{p}}}D^b_{\geq 0}(Y) &:=\left \lbrace K^* \in D^b_c(Y) \mid \dim \mathrm{Supp}(\mathcal{H}^j(\mathbb{D}K^*))\leq -j \right \rbrace
		\end{align*}
		define a $t$-structure on $D^b_c(X)$, called \ita{perverse} $t$-structure. The heart \[ \mathcal{P}(Y) := {^{\mathfrak{p}}}D^b_{\leq 0}(Y)\cap {^{\mathfrak{p}}}D^b_{\geq 0}(Y)\] of the $t$-structure is the abelian category of \ita{perverse sheaves}.  
		
		The truncation functors are denoted ${^{\mathfrak{p}}}\tau_{\leq k}\colon D^b_c(Y)\to {^{\mathfrak{p}}}D^b_{\leq k}(Y)$, ${^{\mathfrak{p}}}\tau_{\geq k}\colon D^b_c(X)\to {^{\mathfrak{p}}}D^b_{\geq k}(Y)$, and the perverse cohomology functors are 
		\[{^{\mathfrak{p}}} \mathcal{H}^{k} :={^{\mathfrak{p}}}\tau_{\leq k} {^{\mathfrak{p}}}\tau_{\geq k}\colon D^b_c(Y) \to \mathcal{P}(Y).\]
		
		
		The category $\mathcal{P}(Y)$ is Artinian, thus every $K\in \mathcal{P}(Y)$ admits an increasing finite filtration with quotients simple objects. 
		Simple perverse sheaves are all of the form 
		$$\mathcal{IC}_{\overline{Z}}(\mathcal{L})$$
		where $\mathcal{IC}_{\overline{Z}}(\mathcal{L})$ denotes the intersection complex associated with some locally closed smooth subvariety $Z$ and a simple local system  $\mathcal{L}$  on it. 
		
		\begin{defn}
			The intersection complex $\mathcal{IC}_{\overline{Z}}(\mathcal{L})$ associated with a local system $\mathcal{L}$ is a complex of sheaves on $Z$ which extends $\mathcal{L}[\dim Z]$ and is determined up to unique isomorphism in the derived category by the conditions:
			\begin{itemize}
				\item $\mathcal{H}^j(\mathcal{IC})_{\overline{Z}}(\mathcal{L}))=0 \quad 	\text{ for all } j< -\dim Z$,
				\item $\mathcal{H}^{-\dim Y}(\mathcal{IC}_{\overline{Z}}(\mathcal{L})_{\mid Z})\cong \mathcal{L}$,
				\item $\dim \mathrm{Supp}\mathcal{H}^j(\mathcal{IC}_{\overline{Z}}(\mathcal{L}))<-j, \text{ for all }j\in \mathbb{Z}$,
				\item $\dim \mathrm{Supp}\mathcal{H}^j(\mathbb{D}\mathcal{IC}_{\overline{Z}}(\mathcal{L}))<-j, \text{ for all }j\in \mathbb{Z}$.
			\end{itemize}
		\end{defn}
		
		When $\mathcal{L}=\mathbb{Q}_{U}$ for an open nonsingular Zariski dense subset $U$ of $Y$ we simply denote $\mathcal{IC}_Y(\mathcal{L})$ by $\mathcal{IC}_Y$. Note that, if $Y$ is smooth, then $\mathcal{IC}_Y\cong \mathbb{Q}[\dim Y]$.
		
		\begin{defn}[Intersection cohomology]
			Let $Y$ be an algebraic variety. We set
			$$IH^i(Y)\doteq \mathbb{H}^{i-\dim Y}(\mathcal{IC}_Y)$$
			and we call it the $i$-th intersection cohomology group of $Y$. 
		\end{defn}
		In general, given any local system $\mathcal{L}$ supported on a locally closed subset $Z$ of $Y$, the cohomology groups of $Y$ with coefficients in $\mathcal{L}$ are shifted hypercohomology groups of the intersection complex associated to $\mathcal{L}$:
		$$IH^*(\overline{Y},\mathcal{L})=H^{*-\dim Y}(\overline{Y},\mathcal{IC}_{\overline{Y}}(\mathcal{L})).$$
		
		\begin{prop}[Properties of Intersection cohomology] Let $Y$ be an algebraic variety of dimension $n$. Then its intersection cohomology group enjoys the following properties:
			\begin{enumerate}[(i)]
				\item $IH^i(Y)$ is finite dimensional vector spaces and $IH^i(Y)=0$ for $i\not\in\{0\ldots 2n\}$.
				\item There is a natural morphism $H^d(Y)\rightarrow IH^d(Y)$ which is an isomorphism when $Y$ as at worst finite quotient singularities. This morphism endows $IH^*(Y)$ with a module structure over $H^*(Y)$, but in general $IH^*(Y)$ has no ring structure or cup product.
				\item (\textbf{Poincaré duality}) There is a canonical isomorphism $IH^i(Y)\cong IH^{2n-i}(Y)^{\vee}$ for all $i\in \mathbb{N}$. 
				\item $IH^i(Y)$ carry a natural mixed Hodge structure. If $Y$ is projective, the mixed Hodge structure is pure of weight $i$ and $IH^i(Y)$ admits a Hodge decomposition
				$$IH^i(Y)\otimes\BC\cong \bigoplus IH^{p,q}(Y),\quad \overline{IH^{p,q}(Y)}\cong H^{q,p}(Y).$$
			\end{enumerate}
		\end{prop}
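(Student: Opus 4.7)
\medskip

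My approach will be to treat the four properties in order, extracting each one from the structure of the complex $\mathcal{IC}_Y$ together with standard machinery in the constructible derived category and in Saito's theory of mixed Hodge modules.

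For (i), I would first note that $\mathcal{IC}_Y$ is a $\mathbb{Q}$-constructible complex on the algebraic variety $Y$, and that the hypercohomology of any bounded constructible complex on an algebraic variety is finite dimensional, hence so is $IH^i(Y)=\mathbb{H}^{i-n}(\mathcal{IC}_Y)$. The vanishing outside $\{0,\ldots,2n\}$ would then follow from the support and cosupport conditions in the defining properties of $\mathcal{IC}_Y$, via the hypercohomology spectral sequence $E_2^{p,q}=H^p(Y,\mathcal{H}^q(\mathcal{IC}_Y))\Rightarrow \mathbb{H}^{p+q}(\mathcal{IC}_Y)$: the support bound $\dim\mathrm{Supp}\,\mathcal{H}^q(\mathcal{IC}_Y)<-q$ controls the $E_2$-terms for $q>-n$ via Artin vanishing for constructible sheaves on affine pieces of a stratification, while the vanishing $\mathcal{H}^q(\mathcal{IC}_Y)=0$ for $q<-n$ kills the rest; the dual bound takes care of the upper range via Verdier duality.

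For (ii), I would construct the natural morphism $H^d(Y)\to IH^d(Y)$ by unpacking the definitions on a smooth open dense $U\subseteq Y$: the constant sheaf $\mathbb{Q}_Y[n]$ restricted to $U$ agrees with $\mathcal{IC}_Y|_U=\mathbb{Q}_U[n]$, and the adjunction/truncation construction of $\mathcal{IC}_Y$ produces a canonical arrow $\mathbb{Q}_Y[n]\to \mathcal{IC}_Y$, which on hypercohomology gives the desired map. When $Y$ has finite quotient singularities, étale-locally $Y$ is of the form $\widetilde Y/G$ with $\widetilde Y$ smooth, and the constant sheaf $\mathbb{Q}_{\widetilde Y}[n]$ descends to $\mathbb{Q}_Y[n]$ as the invariants, which already satisfies the support/cosupport conditions defining $\mathcal{IC}_Y$; hence the canonical map is an isomorphism. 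The $H^*(Y)$-module structure on $IH^*(Y)$ comes from the morphism $\mathbb{Q}_Y\otimes \mathcal{IC}_Y\to \mathcal{IC}_Y$ obtained by promoting $\mathcal{IC}_Y$ to a complex of sheaves of $\mathbb{Q}_Y$-modules and taking cup product. The lack of a ring structure is a genuine observation: the tensor square $\mathcal{IC}_Y\otimes^{\mathbb L} \mathcal{IC}_Y$ need not satisfy the support conditions required of an intersection complex, and I would illustrate with the nodal curve example where a naive product would not be well-defined.

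For (iii), I would invoke Verdier self-duality of the intersection complex, $\mathbb{D}(\mathcal{IC}_Y)\cong \mathcal{IC}_Y$ (up to the appropriate Tate twist), which follows from the characterization of $\mathcal{IC}_Y$ by support/cosupport conditions, these being symmetric under $\mathbb{D}$, together with the normalization $\mathcal{IC}_Y|_U\cong \mathbb{Q}_U[n]$ being self-dual on the smooth locus. Then Verdier duality on the hypercohomology level gives $\mathbb{H}^{-d}(Y,\mathcal{IC}_Y)\cong \mathbb{H}^{d}(Y,\mathbb{D}\mathcal{IC}_Y)^\vee$, which rephrased in terms of $IH^*$ is exactly the Poincaré isomorphism stated.

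For (iv), I would appeal to Saito's theory of mixed Hodge modules: there is a pure Hodge module $\mathrm{IC}_Y^H$ of weight $n$ whose underlying perverse sheaf is $\mathcal{IC}_Y$, and the hypercohomology of any mixed Hodge module carries a natural mixed Hodge structure on the associated cohomology groups; applied here this endows $IH^i(Y)$ with a canonical mixed Hodge structure. If $Y$ is projective, then Saito's analogue of the decomposition theorem, applied to the constant map $a_Y:Y\to\mathrm{pt}$, yields $(a_Y)_*\mathrm{IC}_Y^H$ as a direct sum of pure Hodge structures with the appropriate weight shifts, from which purity of $IH^i(Y)$ of weight $i$ follows; the Hodge decomposition and the conjugation symmetry $\overline{IH^{p,q}(Y)}\cong IH^{q,p}(Y)$ are then consequences of the general formalism of pure polarizable Hodge structures. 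The main obstacle in a self-contained treatment is precisely this last point: constructing the mixed Hodge structure on $IH^*(Y)$ without black-boxing Saito's theory is substantially harder than the other three items, which is why I would present it by citing \cite{deCataldoMigliorini2009} for the detailed verification.
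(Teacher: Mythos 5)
Your proposal is essentially correct, but there is nothing in the paper to compare it against: this Proposition appears in the appendix as a recollection of standard facts, and the paper explicitly defers all proofs to the survey of de Cataldo and Migliorini rather than arguing any of the four items. Your sketch is the standard route and each step is sound: finiteness and the degree bounds from the support/cosupport conditions via the hypercohomology spectral sequence, the canonical map $\mathbb{Q}_Y[n]\to\mathcal{IC}_Y$ and the rational-homology-manifold argument for quotient singularities, Verdier self-duality for Poincar\'e duality, and Saito's theory (or, alternatively, the de Cataldo--Migliorini Hodge-theoretic approach via the decomposition theorem for a resolution) for item (iv). One small imprecision in (i): what you need for the upper vanishing range is not Artin vanishing but the cruder fact that $H^p(Y,\mathcal{F})=0$ for a constructible sheaf $\mathcal{F}$ once $p>2\dim_{\mathbb{C}}\mathrm{Supp}\,\mathcal{F}$; combined with $\dim\mathrm{Supp}\,\mathcal{H}^q(\mathcal{IC}_Y)\leq -q$ this gives $E_2^{p,q}=0$ for $p+q>n$, hence $IH^i(Y)=0$ for $i>2n$, and the lower range follows directly from $\mathcal{H}^q(\mathcal{IC}_Y)=0$ for $q<-n$ without any duality. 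You also silently correct a typo in the statement (the conjugation symmetry should read $\overline{IH^{p,q}(Y)}\cong IH^{q,p}(Y)$), which is the right reading.
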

		
		\subsection{The decomposition theorem package}
		
		The crowning result of the theory of perverse sheaves is the Decomposition theorem by Beilinson, Bernstein and Deligne \cite{bbd}. For the rest of the section we state all result not assuming varieties to be necessarily nonsingular; when a variety $X$ is nonsingular then $IH^*(X)$ can by replaced simply by $H^*(X)$.
		
		\begin{thm}[Decomposition theorem] Let $h:X\rightarrow Y$ be a proper algebraic map of complex algebraic varieties. There is an isomorphism in $D^b_c(Y)$
			\begin{equation}\label{dt1}
				Rh_*\mathcal{IC}_X\cong \bigoplus_{i\in \mathbb{Z}} \ ^{\mathfrak{p}}\mathcal{H}^i(Rh_*\mathcal{IC}_X)[-i].
			\end{equation}
			Furthermore, the perverse sheaves $^{\mathfrak{p}}\mathcal{H}^i(Rh_*\mathcal{IC}_X)$ are semisimple, i.e. there exists a stratification of $Y\sqcup Y_{\alpha}$ such that $^{\mathfrak{p}}\mathcal{H}^i(Rh_*\mathcal{IC}_X)$ decomposes as direct sum 
			\begin{equation}
				^{\mathfrak{p}}\mathcal{H}^i(Rh_*\mathcal{IC}_X)\cong \bigoplus_\alpha \mathcal{IC}_{\overline{Y_{\alpha}}}(\mathcal{L}_{\alpha}),
			\end{equation}
			where $\mathcal{L}_{\alpha}$ are semisimple local systems on $Y_\alpha$.
		\end{thm}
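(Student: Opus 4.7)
The plan is to follow the Hodge-theoretic strategy of de Cataldo--Migliorini, which replaces the original characteristic-$p$ proof of \cite{bbd} by classical arguments. The argument splits into three essentially independent steps: proving the direct sum decomposition, proving semisimplicity of each perverse cohomology sheaf, and identifying its summands as intersection complexes with semisimple local system coefficients. Throughout, the pivot is the relative Hard Lefschetz theorem, which packages all the hard Hodge-theoretic input.

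First I would establish relative Hard Lefschetz: for an $h$-ample class $\eta \in H^2(X,\mathbb{Q})$, cupping with $\eta^i$ induces isomorphisms
\[
\eta^i\colon {}^{\mathfrak{p}}\mathcal{H}^{-i}(Rh_*\mathcal{IC}_X) \xrightarrow{\simeq} {}^{\mathfrak{p}}\mathcal{H}^{i}(Rh_*\mathcal{IC}_X).
\]
When $h$ is smooth and projective this is classical Hard Lefschetz applied fiberwise; the general case is reduced to the smooth situation by means of resolution of singularities and an induction on the defect of semismallness, controlling how intersection complexes behave under birational modifications. Once this is in place, the splitting \eqref{dt1} follows from Deligne's abstract criterion: any object $K \in D^b_c(Y)$ equipped with a self-map $K \to K[2]$ inducing Hard Lefschetz-type isomorphisms on its perverse cohomology sheaves is formal, i.e.\ canonically isomorphic to $\bigoplus_i {}^{\mathfrak{p}}\mathcal{H}^i(K)[-i]$. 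Applied to $K = Rh_*\mathcal{IC}_X$ with the operator induced by $\cup\eta$, this yields the first assertion.

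For the semisimplicity and the identification of the summands as $\mathcal{IC}_{\overline{Y_\alpha}}(\mathcal{L}_\alpha)$, I would invoke the theory of polarizable pure Hodge modules of Saito: the perverse sheaves ${}^{\mathfrak{p}}\mathcal{H}^i(Rh_*\mathcal{IC}_X)$ underlie polarizable pure Hodge modules on $Y$, and the category of such modules is semisimple; its simple objects are precisely the intersection complexes $\mathcal{IC}_{\overline{Z}}(\mathcal{L})$ for $Z \subseteq Y$ locally closed smooth and $\mathcal{L}$ a simple variation of Hodge structure. Stratifying $Y$ by the supports appearing in this decomposition yields the required local systems $\mathcal{L}_\alpha$ on strata $Y_\alpha$. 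The main obstacle is the semisimplicity statement: it is here that the full weight formalism is unavoidable (pure Hodge modules in the Hodge-theoretic approach, or Frobenius weights in the original BBD argument), and it cannot be bypassed by purely formal manipulations of the perverse $t$-structure. All other steps, including the passage from the splitting to the support decomposition, are formal consequences once this semisimplicity is granted.
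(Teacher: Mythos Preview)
The paper does not prove this statement: the Decomposition Theorem is quoted in the appendix as background with a reference to \cite{bbd}, and is used as a black box throughout. Your outline of the de Cataldo--Migliorini/Saito Hodge-theoretic approach is a reasonable sketch of one of the standard proofs, but there is no proof in the paper to compare it against.
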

		
		The direct sum \eqref{dt1} is finite and $i$ ranges from $-r(h)$ to $r(h)$, where $r(h)$ is defined as 
		$$r(h)=\dim X\times_Y X-\dim X.$$
		The decomposition theorem is understood in combination with the \ita{Relative Hard Lefschetz theorem}. As the name suggests, the Relative Hard Lefschetz theorem stated below is a relative version of Hard Lefschetz theorem and it is closely intertwined with the decomposition theorem as it expresses a symmetry between the summands in \eqref{dt1}.
		\begin{thm}[Relative Hard Lefschetz]
			Let $h:X\rightarrow Y$ be a proper map of algebraic varieties with $X$ quasi-projective and let $\alpha$ be the first Chern class of a hyperplane line bundle on $X$. Then we have isomorphisms
			$$\alpha^i \cup: \	^{\mathfrak{p}}\mathcal{H}^{-i}(Rh_*\mathcal{IC}_X)\xrightarrow{\simeq} \ ^{\mathfrak{p}}\mathcal{H}^{i}(Rh_*\mathcal{IC}_X).$$
		\end{thm}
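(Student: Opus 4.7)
The plan is to follow the proof of Beilinson--Bernstein--Deligne, understanding relative Hard Lefschetz as one of the three entangled results (decomposition, semisimplicity, relative Hard Lefschetz) in the \emph{decomposition theorem package} for proper algebraic maps. In particular I would first reduce to a situation where both source and morphism are well-behaved, and then invoke either Deligne's weights machinery or Saito's Hodge module formalism.

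First I would reduce to the case where $h: X \to Y$ is projective and $\mathcal{IC}_X$ is replaced by a shifted constant sheaf on a smooth variety. By Chow's lemma applied to $h$, one obtains a projective birational modification; since the statement is local on $Y$ and concerns only $^{\mathfrak{p}}\mathcal{H}^i(Rh_*\mathcal{IC}_X)$, such a modification is harmless. By resolution of singularities, choose a resolution $\pi: \widetilde{X} \to X$ with $\widetilde{X}$ smooth quasi-projective; by the semisimplicity and decomposition for $\pi$ (proved as part of the same package, or used as an input from the semisimple case), the complex $\mathcal{IC}_X$ appears as a direct summand of $R\pi_* \BQ_{\widetilde{X}}[\dim X]$, so it suffices to prove the relative Hard Lefschetz isomorphism for the projective map $\widetilde{h} := h \circ \pi$ acting on $R\widetilde{h}_*\BQ_{\widetilde{X}}[\dim X]$, and then pass to the $\mathcal{IC}_X$-summand using that cup product with $\alpha$ is compatible with this decomposition.

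At this point there are (at least) three routes. The BBD route is to spread out to a model over a finite type $\BZ$-scheme and reduce modulo a prime $p$, replacing $\BQ$-coefficients with $\overline{\BQ}_\ell$-coefficients and $\mathcal{IC}_X$ by the corresponding $\ell$-adic intersection complex, which is pure of weight $\dim X$ by Gabber's purity theorem. Deligne's Weil II then guarantees that each perverse cohomology sheaf $^{\mathfrak{p}}\mathcal{H}^i(R\widetilde{h}_*\mathcal{IC})$ is pure of weight $\dim X + i$; cupping with $\alpha^i$ raises weight by $2i$, and an elementary weight argument (a morphism between pure $\ell$-adic perverse sheaves of different weights that becomes an isomorphism generically must be one, by Deligne's semisimplicity) upgrades the generic fibrewise Hard Lefschetz to the stated isomorphism of perverse sheaves on $Y$. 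Alternatively, the Saito route replaces $\mathcal{IC}_X$ by the corresponding polarized pure Hodge module; the polarization supplies a bilinear form on each $^{\mathfrak{p}}\mathcal{H}^i(Rh_*\mathcal{IC}_X)$, and the statement follows from the polarized Lefschetz decomposition built into the category of polarized pure Hodge modules.

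The main obstacle is the circularity inherent in the package: relative Hard Lefschetz, the decomposition isomorphism, and semisimplicity of perverse cohomology sheaves are proved simultaneously, not sequentially. The most self-contained escape is the Hodge-theoretic strategy of de Cataldo--Migliorini, which establishes the whole package by induction on the relative dimension $r(h)$ using a Lefschetz pencil argument: one restricts to a general hyperplane section $X_H \subset X$ and compares $Rh_*\mathcal{IC}_X$ with $R(h|_{X_H})_*\mathcal{IC}_{X_H}$ via the Gysin sequence, using the classical Hard Lefschetz theorem on a smooth compactification to produce the inductive isomorphism. In any of these incarnations, the key nontrivial input is the combination of purity of $\mathcal{IC}_X$ with a positivity statement (Weil II, a Hodge--Riemann bilinear relation, or their classical Hodge-theoretic counterpart), which is what ultimately forces $\cup \alpha^i$ to be an isomorphism rather than merely a morphism between objects of the expected dimensions.
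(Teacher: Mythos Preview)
The paper does not give its own proof of this statement: Relative Hard Lefschetz is stated in the appendix as part of the background ``decomposition theorem package'' and attributed to Beilinson--Bernstein--Deligne \cite{bbd}, with no argument supplied. Your proposal is therefore not competing with a proof in the paper; it is an outline of how the cited result is established in the literature.

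As such an outline, your sketch is accurate and covers the three standard routes (BBD weights, Saito's Hodge modules, de Cataldo--Migliorini's Hodge-theoretic induction). Two small comments. First, the reduction step via Chow's lemma and resolution already uses that $\mathcal{IC}_X$ is a direct summand of $R\pi_*\BQ_{\widetilde X}[\dim X]$, which is itself part of the package; you flag this circularity later, but it is already present at the reduction stage, so the honest statement is that all three ingredients are proved simultaneously by induction on a suitable complexity measure, not that one first reduces and then applies purity. Second, in the BBD route the upgrade from generic isomorphism to global isomorphism is not quite a weight argument as you phrase it (a map between pure perverse sheaves of the \emph{same} weight is what you have after cupping with $\alpha^i$); rather, one uses that the cone of $\cup\,\alpha^i$ is pure, hence semisimple, and that a semisimple perverse sheaf with no support is zero. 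These are refinements of presentation rather than gaps.
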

		
		\subsection{Perverse Leray filtration}
		Let $h:X\rightarrow Y$ be a projective map of algebraic varieties.
		\begin{defn}
			The \ita{perverse Leray filtration associated to $h$} is defined as
		\[P_k IH^*(X)=\mathrm{Im}\left\lbrace H^*(Y,\ ^\mathfrak{p}\tau_{\leq -k}\mathrm{R}h_*\mathcal{IC}_X) \rightarrow H^*(Y,\mathrm{R}h_*\mathcal{IC}_X)\right\rbrace.\]
		\end{defn}
		
		\begin{rmk} In some papers, like in the original one on the P=W conjecture \cite{deCataldoHauselMigliorini2012}, the perverse filtration is defined with a shift so that it ranges from 0 to the cohomological degree: 
				\[P'_k IH^*(X)=P_k H^{*-(\dim X -r(h))}(Y,\mathrm{R}h_*\mathcal{IC}_X[\dim X -r(h)]).\]
		In the case of the Hitchin fibration this amounts to shift by the dimension of the Hitchin base.
			\end{rmk}
		
		\begin{defn} A class $\eta\in H^*(X)$ has perversity $k$ if 
			$$\eta\in P_kH^*(X)\text{ and }\alpha\not\in P_{k-1}H^*(X).$$
		\end{defn}
		\begin{defn}
			A \ita{splitting} of the perverse filtration is a vector space decomposition $IH^*(X)=\bigoplus_{\ell} G_{\ell}IH^*(X)$ such that 
			$$P_kIH^*(X)=\bigoplus_{\ell\leq k}G_{\ell}IH^*(X).$$ 
		\end{defn}

	For ease of the reader, we restate the relative the relative Hard Lefschetz theorem in terms of the perverse Leray filtration. 
		\begin{thm}[Relative hard Lefschetz for the perverse filtration]
			Let $h\colon X\rightarrow Y$ be a proper map of algebraic varieties and let $\alpha\in H^2(X)$ be the first Chern class of a relatively ample line bundle. Then there exists an isomorphism 
			$$	\cup \ \alpha^k \colon  Gr^P_{-k}IH^d(X)\rightarrow Gr^P_{k}IH^{d+2l}(X).$$
		\end{thm}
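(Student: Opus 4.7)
The plan is to deduce the statement from the two results stated immediately before it in the appendix: the Decomposition Theorem and the Relative Hard Lefschetz theorem at the level of perverse cohomology sheaves. These two ingredients together reduce the problem to a formal unwinding of definitions.

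First, I would apply the Decomposition Theorem to obtain the canonical splitting
\[
Rh_*\mathcal{IC}_X \cong \bigoplus_{i\in \mathbb{Z}} {}^{\mathfrak{p}}\mathcal{H}^i(Rh_*\mathcal{IC}_X)[-i].
\]
The conceptually important consequence of this splitting is that it makes the perverse truncation triangles split in the derived category of $Y$, and therefore also in hypercohomology. Consequently the perverse Leray filtration is strict, and the associated graded piece $Gr^P_{-k}IH^d(X)$ is canonically identified (up to the shift by $\dim X$ absorbed in the definition of intersection cohomology) with the hypercohomology of a single perverse cohomology sheaf:
\[
Gr^P_{-k}IH^d(X) \cong \mathbb{H}^{d-\dim X-k}\bigl(Y,\, {}^{\mathfrak{p}}\mathcal{H}^{-k}(Rh_*\mathcal{IC}_X)\bigr).
\]

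Next, I would invoke the Relative Hard Lefschetz theorem, which under the hypothesis that $\alpha$ is the first Chern class of a relatively ample line bundle provides, for every $k\geq 0$, isomorphisms of perverse sheaves on $Y$
\[
\cup\alpha^k\colon\ {}^{\mathfrak{p}}\mathcal{H}^{-k}(Rh_*\mathcal{IC}_X) \xrightarrow{\ \simeq\ } {}^{\mathfrak{p}}\mathcal{H}^{k}(Rh_*\mathcal{IC}_X).
\]
Taking hypercohomology preserves isomorphisms and shifts the cohomological degree by $2k$ because $\alpha^k\in H^{2k}(X)$. Combining this with the identification of graded pieces obtained in the previous step yields precisely the desired isomorphism $Gr^P_{-k}IH^d(X) \xrightarrow{\simeq} Gr^P_{k}IH^{d+2k}(X)$ (reading the $2l$ in the statement as a typo for $2k$).

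The one point requiring care — and in my view the main obstacle to writing down a clean argument rather than a genuine mathematical difficulty — is the functoriality check: one must verify that cupping with $\alpha\in H^2(X)$ on $IH^*(X)$, transported to a map of complexes on $Y$ via the pushforward, induces on perverse cohomology sheaves exactly the map that appears in the Relative Hard Lefschetz statement. This requires using that the cup product action of $\alpha$ on $Rh_*\mathcal{IC}_X$ is compatible with the perverse truncation functors (which follows from the fact that cupping with a cohomology class is a morphism of complexes and therefore commutes with the perverse cohomology functors ${}^{\mathfrak{p}}\mathcal{H}^i$), and then matching the two maps through the splitting provided by the Decomposition Theorem. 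Once this compatibility is in hand, the theorem follows immediately from the two ingredients above.
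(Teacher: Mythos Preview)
Your argument is correct and is exactly the intended one: the paper does not give a separate proof of this theorem but simply presents it as a restatement of the sheaf-level Relative Hard Lefschetz, so the deduction you outline (splitting via the Decomposition Theorem to identify $Gr^P_{\pm k}$ with hypercohomology of ${}^{\mathfrak{p}}\mathcal{H}^{\pm k}$, then applying the perverse-sheaf isomorphism $\cup\alpha^k$) is precisely the unwinding the reader is meant to carry out. Your remark on the functoriality check is also appropriate and standard.
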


\section*{Declarations}

		\textbf{Funding:} The author is supported by INdAM GNSAGA.\\
		\par
		\noindent
		\gr{Conflict of interests:} The author declares no conflict of interests.
		
		
		%
		%
		%
		%
		\newpage
		\bibliographystyle{plain}
		\bibliography{main}
		\vspace{2cm}
		\noindent
		Department of Mathematics, Physics and Informatics\\
		\noindent
		University of Modena and Reggio Emilia\\
		\noindent
		{\tt camilla.felisetti@unimore.it}
		
		\end{document}